\numberwithin{equation}{section}
\newtheorem{theorem}{\bf Theorem}[section]
\newtheorem{proposition}[theorem]{\bf Proposition}
\newtheorem{corollary}[theorem]{\bf Corollary}
\newtheorem{lemma}[theorem]{\bf Lemma}
\theoremstyle{remark}
\newtheorem{definition}[theorem]{\bf Definition}
\newtheorem{example}[theorem]{\bf Example}
\newtheorem{question}[theorem]{\bf Question}
\newtheorem{remark}[theorem]{\bf Remark}
\newcommand*{\Ge}{\geqslant}
\newcommand*{\hh}{\mathcal{H}}
\newcommand*{\inp}[2]{\langle{#1},\,{#2} \rangle}
\newcommand*{\Le}{\leqslant}
\newcommand*{\ob}[1]{{\mathcal R}(#1)}
\newcommand*{\zbb}{\mathbb{Z}}
\def\D{\mathbb{D}}
\begin{document}
\title[Weakly concave operators]{Weakly concave operators}

   \author{Sameer Chavan}
   \address{Department of Mathematics and Statistics\\
Indian Institute of Technology Kanpur, India}
   \email{chavan@iitk.ac.in}

   \author[J.\ Stochel]{Jan Stochel}
   \address{Instytut Matematyki, Uniwersytet Jagiello\'nski,
ul.\ {\L}ojasiewicza 6, 30-348 Kra\-k\'ow,
Poland}
   \email{Jan.Stochel@im.uj.edu.pl}

   \keywords{Cauchy dual, wandering subspace
property, Wold-type decomposition, trace class
self-commutator, hyponormal operator}

\subjclass[2020]{Primary 47B20, 47A10; Secondary
47A16, 47C15}

   \begin{abstract}
We study a class of left-invertible operators
which we call weakly concave operators. It
includes the class of concave operators and some
subclasses of expansive strict $m$-isometries
with $m > 2$. We prove a Wold-type decomposition
for weakly concave operators. We also obtain a
Berger-Shaw-type theorem for analytic finitely
cyclic weakly concave operators. The proofs of
these results rely heavily on a spectral
dichotomy for left-invertible operators. It
provides a fairly close relationship, written in
terms of the reciprocal automorphism of the
Riemann sphere, between the spectra of a
left-invertible operator and any of its left
inverses. We further place the class of weakly
concave operators, as the term $\mathcal A_1$,
in the chain $\mathcal A_0 \subseteq \mathcal
A_1 \subseteq \ldots \subseteq \mathcal
A_{\infty}$ of collections of left-invertible
operators. We show that most of the
aforementioned results can be proved for members
of these classes. Subtleties arise depending on
whether the index $k$ of the class $\mathcal
A_k$ is finite or not. In particular, a
Berger-Shaw-type theorem fails to be true for
members of~$\mathcal A_{\infty}$. This
discrepancy is better revealed in the context of
$C^*$- and $W^*$-algebras.
   \end{abstract}

\maketitle


\section{Introduction}
Denote by $\zbb_+,$ $\mathbb Z$ and $\mathbb{R}$ the sets of
non-negative integers, integers and real numbers,
respectively. For a subset $\varOmega$ of the complex plane
$\mathbb C$, let $\partial \varOmega$, ${\overline
{\varOmega}}$ and ${\mathbb C} \setminus \varOmega$ stand for
the boundary, the closure and the complement of $\varOmega$
in $\mathbb C,$ respectively. For a complex number $z,$ the
conjugate and the absolute value of $z$ is denoted by
$\bar{z}$ and $|z|,$ respectively. Write $\varOmega^{*}$ for
the set $\{\bar{z} \colon z \in \varOmega\}$. For a positive
real number $r,$ $\mathbb D_r$ stands for the open disc $\{z
\in \mathbb C \colon |z| < r\}.$ Set $\mathbb D := \mathbb
D_1.$ For positive real numbers $r$ and $R$ with $r < R$,
denote by $\mathbb A(r, R)$ the annulus $\{z \in \mathbb C
\colon r < |z| < R\}$. Write $\mathbb{R}[x]$ for the ring of
polynomials in one real variable $x$ with real coefficients.

Unless otherwise stated, all Hilbert spaces appearing below
are complex. Let $\hh$ denote a Hilbert space and let
$\mathcal B(\hh)$ stand for the unital $C^*$-algebra of
bounded linear operators on $\hh$ with the identity operator
$I$ as the unit. We write $\ker T$ and $\ob{T}$ for the
kernel and the range of an operator $T \in \mathcal B(\hh).$
The symbol $|T|$ denotes the positive square root
$(T^*T)^{1/2}$ of $T^*T$. The {\it approximate point
spectrum} $\sigma_{ap}(T)$ of $T$ is the set of those
$\lambda$ in $\mathbb C$ for which $T-\lambda I$ is not
bounded below, and the {\it spectrum} $\sigma(T)$ is the
complement of the set of those $\lambda$ in $\mathbb C$ for
which ${(T-\lambda I)}^{-1}$ exists as a bounded linear
operator on $\hh$. The spectral radius $r(T)$ of $T$ is given
by $r(T)= \sup \{|\lambda| \colon \lambda \in \sigma(T)\}.$
It turns out that $\sigma(T)$ is a nonempty compact subset of
$\mathbb C$ (provided $\hh \neq \{0\}$), and hence $r(T)$ is
finite (see \cite[Theorem 2.2.9]{Si}). By the {\it inner
radius} $i(T)$ of $T$, we understand
   \begin{align} \label{iT}
i(T):= \lim_{n \to \infty}
\Big(\inf_{\|x\|=1}\|T^nx\|\Big)^{1/n}.
   \end{align}
It turns out that $i(T) \leqslant r(T)$, and hence $i(T)$
is finite. Moreover, by \cite[Proposition 13]{Sh}, we
have
   \begin{align*}
\sigma_{ap}(T) \subseteq \{\lambda \in \mathbb C
\colon i(T) \leqslant |\lambda| \leqslant
r(T)\}.
   \end{align*}

An operator $T$ in $\mathcal B(\hh)$ is {\it left-invertible}
if there exists $L \in \mathcal B(\hh)$ (a {\it left-inverse}
of $T$) such that $LT=I.$ Note that $T$ is left-invertible if
and only if $0\notin \sigma_{ap}(T)$, or equivalently if and
only if $T^*T$ is invertible (as a member of $\mathcal
B(\hh)$). Following \cite{S01}, we call the operator
$T':=T(T^*T)^{-1}$ the {\it Cauchy dual} of $T$ whenever $T$
is left-invertible. The Cauchy dual operator provides a
canonical choice of a left-inverse $L:=T'^*$ for any
left-invertible operator $T.$ We refer the reader to
\cite{S01} for the role that the Cauchy dual operator played
in the solution of the wandering subspace problem for the
Bergman shift. Straightforward computations yield the
following properties of the Cauchy dual, where in general
$P_{\mathcal M}$ denotes the orthogonal projection of $\hh$
onto a closed vector subspace $\mathcal M$ of $\hh$.
   \begin{proposition}  \label{polur}
If $T\in \mathcal B(\hh)$ is left-invertible and $T=U|T|$ is
the polar decomposition of $T$, then
   \begin{enumerate}
   \item[(i)]
$U=T|T|^{-1}$ and $|T'|=|T|^{-1},$
   \item[(ii)] $T'= U|T'|$  is the
polar decomposition of $T'$,
   \item[(iii)]
$P_{\ob{T}} = T'T^* = UU^*$.
   \end{enumerate}
   \end{proposition}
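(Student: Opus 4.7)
The plan is to exploit the fact that left-invertibility of $T$ forces $T^*T$, and hence $|T|=(T^*T)^{1/2}$, to be invertible in $\mathcal{B}(\hh)$. Once $|T|^{-1}$ is available in $\mathcal{B}(\hh)$, the three assertions reduce to short algebraic manipulations, together with the standard uniqueness of the polar decomposition. The one non-algebraic ingredient I would use is that a left-invertible operator is bounded below, so $\ob{T}$ is closed.

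First I would dispose of (i). Right-multiplying the identity $T=U|T|$ by $|T|^{-1}$ yields $U=T|T|^{-1}$. For the second half, since $T^*T$ is self-adjoint and invertible, $(T^*T)^{-1}$ is self-adjoint and positive, so I would compute
\[
|T'|^{2}=T'^{*}T' = (T^*T)^{-1}T^*T(T^*T)^{-1} = (T^*T)^{-1} = |T|^{-2},
\]
and then invoke uniqueness of the positive square root to conclude $|T'|=|T|^{-1}$.

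For (ii), using (i) I would simply verify
\[
U|T'| = T|T|^{-1}\cdot |T|^{-1} = T(T^*T)^{-1} = T'.
\]
To see this is the polar decomposition of $T'$, I would note that because $|T|^{-1}$ is invertible in $\mathcal{B}(\hh)$, the initial space $\overline{\ob{|T|}}=\hh$ of $U$ equals $\overline{\ob{|T'|}}$, so $U$ is a partial isometry whose initial space supports $|T'|$, which is precisely the defining property that pins down the polar decomposition.

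For (iii), substituting $T=U|T|$ into the definition of $T'$ gives
\[
T'T^{*}=T(T^*T)^{-1}T^{*}=U|T|\,|T|^{-2}|T|\,U^{*}=UU^{*}.
\]
Finally, $U$ is an isometry on $\hh$ (its initial space is all of $\hh$), hence $UU^*$ is the orthogonal projection onto its range $\ob{U}=\overline{\ob{T}}$; since $T$ is left-invertible it is bounded below, so $\ob{T}$ is closed and $UU^* = P_{\ob{T}}$. There is no real obstacle here; the only point requiring a moment's care is confirming that $U$ in the polar decomposition of a bounded-below operator is genuinely an isometry on $\hh$, which is exactly what makes $UU^{*}$ a projection onto the (closed) range of $T$.
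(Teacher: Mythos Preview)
Your proof is correct and is precisely the kind of verification the paper has in mind: the paper does not supply a proof of this proposition at all, stating only that ``straightforward computations yield the following properties of the Cauchy dual,'' and your argument carries out exactly those computations (invertibility of $|T|$, the identity $T'^{*}T'=(T^*T)^{-1}$, and the observation that $U$ is an isometry with range $\ob{T}$).
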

The {\it left spectrum} $\sigma_l(T)$ of $T \in
\mathcal B(\hh)$ is defined as the set of those
$\lambda$ in $\mathbb C$ for which $T-\lambda I$
is not left-invertible. Similarly, one can
define the {\it right-invertible} operator and
the {\it right spectrum} $\sigma_r(T)$ of $T.$
It is easy to see that
   \begin{align*}
\sigma_l(T)=\sigma_{ap}(T)=\{\lambda \in \mathbb
C \colon\overline{\lambda} \in \sigma_r(T^*)\}.
   \end{align*}
We say that $T$ is {\it Fredholm} if $\ob{T}$ is
closed and both $\dim \ker T$ and $\dim \ker
T^*$ are finite, where $\dim$ stands for the
Hilbert space dimension. The {\it essential
spectrum} $\sigma_e(T)$ of $T$ is the complement
of the set of those $\lambda \in \mathbb C$ for
which $T-\lambda I$ is Fredholm. The {\it
Fredholm index} $\mbox{ind}_T \colon \mathbb C
\setminus \sigma_e(T) \to \mathbb Z$ is given by
   \begin{align*}
\mbox{ind}_T(\lambda)\,:=\,\dim \ker(T-\lambda
I)-\dim \ker (T^*-\bar{\lambda} I), \quad
\lambda \in \mathbb C \setminus \sigma_e(T).
   \end{align*}

   An operator $T$ in ${\mathcal B}(\hh)$ is
{\it of trace class} if $\|T\|_{1, \mathcal E}:=
\sum_{n =0}^{\infty}\inp{|T|e_n}{e_n}$ is finite
for every choice of an orthonormal basis
$\mathcal E=\{e_n\}_{n \in \zbb_+}$ of $\hh.$
The number $\|T\|_{1, \mathcal E}$ turns out to
be independent of the choice of an orthonormal
basis $\mathcal E$ of $\hh$ and is denoted by
$\|T\|_1;$ it is called the {\it trace norm} of
$T.$ In case $T$ is of trace class, the {\it
trace} of $T$ given by $\mbox{tr}(T) = \sum_{n =
0}^{\infty}\inp{Te_n}{e_n}$ is a finite number.
Again, it is independent of the choice of an
orthonormal basis $\mathcal E$ of $\hh$ (see
\cite{Si} for more details on trace class
operators).

An operator $T$ in ${\mathcal B}(\hh)$ is said
to be {\it finitely cyclic} if there is a finite
dimensional vector subspace $\mathcal{M}$ of
$\mathcal{H}$, called a {\em cyclic space} of
$T$, such that
   \begin{align*}
\hh = \bigvee_{n \in \zbb_+} T^n (\mathcal{M}).
   \end{align*}
The following fact is well known (see
\cite[Proposition~1(i)]{H} and
\cite[Lemma~3.6]{ACJS}).
   \begin{align} \label{fundim}
   \begin{minipage}{60ex}
{\em If $T \in \mathcal B(\hh)$ is finitely
cyclic, then $\ker(T^*-\lambda I)$ is finite
dimensional for every $\lambda \in \mathbb C$.}
   \end{minipage}
   \end{align}
An operator $T \in \mathcal B(\hh)$ is {\it
analytic} if $\bigcap_{n=0}^{\infty}\ob{T^n}
 =\{0\}.$ Note that an analytic operator on a
non-zero Hilbert space is never invertible. We
say that $T \in \mathcal B(\hh)$ has the {\it
wandering subspace property} if
   \begin{align*}
\hh = \bigvee_{n \in \zbb_+} T^n(\ker T^*).
   \end{align*}
Recall that an operator $T\in \mathcal B(\hh)$
is said to be {\em completely non-unitary}
(resp., {\em completely non-normal}\/) if there
is no nonzero closed subspace of $\hh$ reducing
$T$ to a unitary (resp., normal) operator. The
following fact is easy to verify.
   \begin{align} \label{wsp-cnu}
   \begin{minipage}{65ex}
{\em If $T \in \mathcal B(\hh)$ has the
wandering subspace property, then $T$ is
completely non-unitary.}
   \end{minipage}
   \end{align}

   Denote by $[A,B]$ the {\em commutator} $AB-BA$ of $A,B \in
\mathcal{B}(\mathcal{H}).$ An operator $T \in \mathcal
B(\hh)$ is said to be {\it hyponormal} if the self-commutator
$[T^*, T]$ of $T$ is positive. We say that $T$ is {\it
cohyponormal} if $T^*$ is hyponormal. The reader is referred
to \cite[Chapter IV]{Co1} and \cite[Chapter 1]{M-P} for the
fundamentals of the theory of hyponormal operators including
the statements of Berger-Shaw Theorem and Putnam's
inequality. Given a positive integer $m$ and $T \in \mathcal
B(\hh)$, we set
   \begin{align*}
B_m(T) = \sum_{k=0}^m (-1)^k {m \choose k}{T^*}^kT^k.
   \end{align*}
  We say that $T$ is a {\it contraction} (resp.\ an {\it
expansion}) if $B_1(T) \geqslant 0$ (resp., \mbox{$B_1(T)
\leqslant 0$}). An operator $T$ is an {\it isometry} if
$B_1(T)=0$. For an integer $m \Ge 2$, an operator $T \in
\mathcal B(\hh)$ is said to be an {\it $m$-isometry} (resp.\
an {\it $m$-expansion}) if $B_m(T) = 0$ (resp.\ $B_m(T)
\leqslant 0$). By a {\it strict $m$-isometry}, we understand
an $m$-isometry which is not an $(m-1)$-isometry (or not an
isometry if $m=2$). A $2$-expansion is sometimes called a
{\it concave} operator. It is well known that any concave
operator is expansive (see \cite[Lemma 1]{R}). Furthermore,
the approximate point spectrum of any $m$-isometry and any
concave operator is contained in the unit circle $\partial
\mathbb D$ (see \cite[Lemma 1.21]{Ag-St}, \cite[Lemma
3.1]{JS} and \cite[Lemma 1]{R}). In particular, the Cauchy
dual operator of an $m$-isometry or a concave operator is
well defined.

The following theorem, which is one of the main results of
this paper, generalizes \cite[Theorem 3.6(A)]{S01} (for the
notion of weak concavity, see Definition~\ref{def}).
   \begin{theorem} \label{wsp}
Suppose that $T \in \mathcal B(\hh)$ is a weakly
concave operator. Let $\hh_u$ and $\hh'_u$
denote the closed vector subspaces of $\hh$
given by
   \begin{align} \label{Hu}
\text{$\hh_u := \bigcap_{n=0}^{\infty}\ob{T^n}$
\hspace{.2ex} and \hspace{.2ex} $\hh'_u :=
\bigcap_{n=0}^{\infty}\ob{T'^n}.$}
   \end{align}
Then $\hh'_u \subseteq \hh_u$ and $\hh'_u$
reduces $T.$ Moreover,
   \begin{align} \label{main-deco}
T = U \oplus S ~\mbox{with respect to the
decomposition} ~\hh = \hh'_u \oplus
(\hh'_u)^{\perp},
   \end{align}
where $U \in \mathcal B(\hh'_u)$ is unitary and
$S \in \mathcal B((\hh'_u)^{\perp})$ has the
wandering subspace property.
   \end{theorem}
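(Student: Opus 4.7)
The plan is to adapt Shimorin's proof of \cite[Theorem~3.6(A)]{S01} for concave operators to the weakly concave setting, using the Cauchy dual $T'$ as the principal computational tool and relying on Proposition~\ref{polur} to pass information between $T$ and $T'$; throughout the argument, the identities $\ker T^* = \ker T'^*$ and $P_{\ob{T}} = T'T^* = UU^*$ will play central roles.

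The first task is the inclusion $\hh'_u \subseteq \hh_u$ together with the reducing property. Taking orthogonal complements and using that left-invertibility forces each $T^n$ to have closed range, the inclusion is equivalent to $\bigvee_{n \Ge 1} \ker T^{*n} \subseteq \bigvee_{n \Ge 1} \ker T'^{*n}$; at $n = 1$ this reduces to $\ker T^* = \ker T'^*$, while the higher-order comparison should be where weak concavity first enters. The $T^*$-invariance of $\hh'_u$ is cheap: from $T^*T' = I$ (the adjoint of $T'^*T = I$) one gets $T^*(T'^{n+1}z) = T'^n z$, so $T^*(\hh'_u) \subseteq \hh'_u$. The more delicate $T$-invariance should again follow from the weak concavity hypothesis, combined with the bijectivity of $T'|_{\hh'_u}$ (which itself will be a consequence of the $T^*$-invariance just established and the identity $T'T^* = P_{\ob{T}}$).

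Once the reducing property is in hand, I identify the restrictions. On $\hh'_u$, the relations $\hh'_u \subseteq \ob{T}$, $T^*(\hh'_u) \subseteq \hh'_u$, and $T'T^* = P_{\ob{T}}$ show that $T'|_{\hh'_u}$ is a bijection of $\hh'_u$ onto itself. Feeding this through $T'^*T = I$ and the identity $|T'| = |T|^{-1}$, I expect the weak concavity hypothesis to force $|T||_{\hh'_u} = I$; combined with the fact that $UU^* = P_{\ob{T}}$ acts as the identity on $\hh'_u$, this makes $T|_{\hh'_u}$ a unitary operator.

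The substantive step is proving that $S := T|_{(\hh'_u)^{\perp}}$ has the wandering subspace property. Set $\mathcal{K} := (\hh'_u)^{\perp}$; since $\ker T^* \subseteq \mathcal{K}$, it suffices to establish the Parseval-type identity
   \begin{align*}
\|P_{\mathcal K}x\|^2 \, = \, \sum_{k=0}^{\infty}\|P_{\ker T^*}T'^k x\|^2, \quad x \in \hh,
   \end{align*}
which I would extract by telescoping a one-step estimate of the form $\|T'^{k+1}x\|^2 \Le \|T'^k x\|^2 - \|P_{\ker T^*}T'^k x\|^2$. In the concave case this is a direct rewrite of $B_2(T) \Le 0$ in Cauchy-dual form; for weakly concave operators one should expect a weaker, higher-order inequality, and the main obstacle I anticipate is arranging it into a telescoping sum whose tail vanishes in the limit and identifies $\lim_{k \to \infty}\|T'^k x\|^2$ with $\|P_{\hh'_u}x\|^2$.
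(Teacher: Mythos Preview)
Your plan has the right scaffolding but misses the mechanism that makes the argument go through, and it inverts the logical order of several steps compared with the paper's proof.

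The substantive content of the theorem is showing that $R:=T'|_{\hh'_u}$ is \emph{unitary}; everything else follows from general facts about left-invertible operators. Your proposal hopes that ``the weak concavity hypothesis [will] force $|T||_{\hh'_u}=I$'' and that ``$T$-invariance should again follow from the weak concavity hypothesis,'' but you never say how. Hyponormality of $T'|_{\ob{T'}}$ by itself does not give this: an invertible hyponormal contraction need not be unitary. The paper supplies the missing ingredient via spectral information. From conditions \eqref{wc1} and \eqref{wc2} and the spectral dichotomy (Corollary~\ref{sp-dual}), one gets $\sigma_{ap}(T')=\partial\mathbb D$, hence $\sigma_{ap}(R)\subseteq\partial\mathbb D$; since $R$ is invertible, $\sigma(R)\subseteq\partial\mathbb D$. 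Condition \eqref{wc} makes $R$ hyponormal (Proposition~\ref{bst-lemma}), and a hyponormal operator with spectrum in $\partial\mathbb D$ is unitary (Stampfli). Once $R$ is unitary, the fact that a contraction restricting to a unitary on an invariant subspace is reduced by that subspace gives that $\hh'_u$ reduces $T'$, hence $T$; unitarity of $T|_{\hh'_u}$ and the inclusion $\hh'_u\subseteq\hh_u$ are then immediate consequences, not things to prove first.

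Your telescoping/Parseval program for the wandering subspace property is unnecessary and, as you yourself note, has no clear route under mere weak concavity. The identity $(\hh'_u)^{\perp}=\bigvee_{n\in\zbb_+}T^n(\ker T^*)$ is \cite[Proposition~2.7(i)]{S01}, valid for \emph{any} left-invertible operator; once $\hh'_u$ reduces $T$ to a unitary, this immediately yields the wandering subspace property for $S=T|_{(\hh'_u)^{\perp}}$. So the whole difficulty is concentrated in the unitarity of $R$, and that is precisely where your outline lacks an argument.
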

In general, the Wold-type decomposition obtained in
Theorem~\ref{wsp} is weaker than Shimorin's one in the sense
that the operator $S$ in the decomposition \eqref{main-deco}
of $T$ may not be analytic. However, in the case where $T$ is
concave, it can be seen that $\hh'_u= \hh_u,$ which allows us
to recover \cite[Theorem 3.6(A)]{S01} (see
Corollary~\ref{thm-gen}). The proof of Theorem~\ref{wsp} is
based on the circle of ideas developed by Shimorin in
\cite{S01} together with the spectral dichotomy principle,
which is given in Section~\ref{Sec.4}. We also prove the
following Berger-Shaw-type theorem together with the
Carey-Pincus trace formula for analytic finitely cyclic
weakly concave operators (cf.\ \cite[Theorem~2.1, p.\
152]{Co1} and \cite[Proposition~2.21]{C07}).
   \begin{theorem}  \label{bst}
Suppose that $T \in \mathcal B(\hh)$ is an
analytic finitely cyclic weakly concave
operator. Then $T$ and $T'$ have trace class
self-commutators. Moreover, for any complex
polynomials $p, q$ in two complex variables $z$
and $\bar z$,
   \begin{align*}
\mathrm{tr}\,[p(T', T'^*), q(T', T'^*)] &=
\mathrm{tr}\,[p(T, T^*), q(T, T^*)] \\ &=
\frac{1}{\pi} \dim \ker T^* \int_{\D}
\Big(\frac{\partial p}{\partial \bar
z}\frac{\partial q}{\partial z} - \frac{\partial
p}{\partial z}\frac{\partial q}{\partial \bar
z}\Big) (z, \bar z) dA(z),
   \end{align*}
where $dA$ denotes the Lebesgue planar measure.
   \end{theorem}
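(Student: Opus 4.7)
My first move will be to combine Theorem~\ref{wsp} with the analyticity hypothesis on $T$. Since $T$ is analytic, $\hh_u = \{0\}$, so the inclusion $\hh'_u \subseteq \hh_u$ from Theorem~\ref{wsp} forces $\hh'_u = \{0\}$. The Wold-type decomposition \eqref{main-deco} therefore collapses and $T$ itself has the wandering subspace property: $\hh = \bigvee_{n \in \zbb_+} T^n(\ker T^*)$. Coupling finite cyclicity of $T$ with \eqref{fundim}, one obtains $k := \dim \ker T^* < \infty$. Since $\ker T'^* = \ker T^* = \ob{T}^{\perp}$ by Proposition~\ref{polur}(iii), the same $k$-dimensional wandering subspace controls $T$ and $T'$ simultaneously.

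I would next work with $T'$. Earlier material in the paper should establish, in the spirit of Shimorin's analysis of concave operators, that the Cauchy dual of a weakly concave operator is a hyponormal contraction; in particular $\sigma(T') \subseteq \overline{\mathbb D}$ has planar area at most $\pi$. Berger--Shaw's theorem, applied to the finitely multicyclic hyponormal contraction $T'$, then yields that $[T'^*, T']$ is of trace class. To identify the corresponding trace formula, I would compute the Carey--Pincus principal function $g_{T'}$ via Fredholm-index analysis: using analyticity of $T'$ together with the spectral dichotomy, $T' - \lambda I$ is Fredholm with $\mathrm{ind}_{T'}(\lambda) = -k$ for $\lambda \in \mathbb D$ and with index $0$ for $|\lambda| > 1$, so standard principal-function theory gives $g_{T'} = k \, \mathbf 1_{\mathbb D}$ almost everywhere. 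The Helton--Howe formula then delivers the claimed integral identity for $T'$.

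To obtain the corresponding assertion about $T$, one cannot appeal to Berger--Shaw directly, since weakly concave operators are expansive rather than hyponormal. My plan is to exploit the algebraic bridge between $T$ and $T'$ recorded in Proposition~\ref{polur}. Writing $T = U|T|$ and $T' = U|T|^{-1}$ with the common isometry $U$, a direct computation yields
\[
[T^*,T] \,=\, |T|\bigl(I - VV^*\bigr)|T|, \qquad [T'^*, T'] \,=\, |T|^{-1}\bigl(I - WW^*\bigr)|T|^{-1},
\]
with $V = |T|^{-1}U|T|$ and $W = |T|U|T|^{-1}$. Together with the finite-rank identity $I - UU^* = P_{\ker T^*}$, these formulas should transfer trace-class membership from $[T'^*, T']$ to $[T^*, T]$, at which point $T$ is almost normal and the Helton--Howe formalism applies. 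Re-running the Fredholm-index computation on $T$---with $\mathrm{ind}_T(\lambda) = -k$ for $\lambda \in \mathbb D$ inherited from the spectral dichotomy relating $\sigma(T)$ and $\sigma(T')$ via the reciprocal automorphism---gives $g_T = k \, \mathbf 1_{\mathbb D}$ and hence the same integral formula.

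The main obstacle I anticipate is the third step: bootstrapping trace-class membership of $[T'^*, T']$ to $[T^*, T]$, and then pinning down the two principal functions as identical. The former requires controlling conjugations by $|T|^{\pm 1}$ in the displayed identities, which in general can destroy trace-ideal membership; I expect this to lean on the specific structure of weak concavity (likely an inequality of the form $T^*T \geqslant I$ keeping $|T|^{-1}$ bounded), together with a reduction to the wandering-subspace model that identifies $T$, up to unitary equivalence and modulo trace class, with an explicit Bergman-type $k$-shift on $\mathbb D$.
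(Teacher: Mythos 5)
Your overall architecture (Berger--Shaw applied on the Cauchy dual side, an algebraic transfer of trace-class membership to $T$, then a Fredholm-index computation of the principal function feeding the Helton--Howe/Carey--Pincus formula) matches the paper's, but there are two genuine gaps. The first is the pivot of your second step: the claim that the Cauchy dual of a weakly concave operator is hyponormal is false in general. Proposition~\ref{bst-lemma} shows that weak concavity is equivalent to hyponormality of the \emph{restriction} $T'|_{\ob{T'}}$, not of $T'$ itself; for a unilateral weighted shift this is the difference between requiring $w_n\geqslant w_{n+1}$ for all $n\geqslant 1$ versus for all $n\geqslant 0$ (Proposition~\ref{W-w-c} with $k=1$), and Example~\ref{przyk} supplies weakly concave shifts for which the latter fails. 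So Berger--Shaw cannot be applied to $T'$ directly. The paper's route is to note that $T'$ is finitely cyclic with cyclic space $\ker T^*$ (by analyticity and \cite[Proposition~2.7(ii)]{S01}), hence $T'|_{\ob{T'}}$ is finitely cyclic with cyclic space $T'(\ker T^*)$ and \emph{is} hyponormal; Berger--Shaw applies to this restriction, and since $T'$ is a finite-rank perturbation of $T'|_{\ob{T'}}\oplus 0$, the self-commutator $[T'^*,T']$ is of trace class.

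The second gap is the step you yourself flag as the main obstacle: transferring trace-class membership from $[T'^*,T']$ to $[T^*,T]$. Your polar-decomposition identities are correct but do not close this step, and no appeal to the "specific structure of weak concavity" is needed. The paper uses the purely algebraic identities \eqref{dwuwz}, valid for any left-invertible $A$: $[A^*,A]A=-A^*A[A'^*,A']AA^*A$ and $[A^*,A]=([A^*,A]A)A'^*+[A^*,A]P$, where $P$ is the finite-rank projection onto $\ker A^*$; since the trace class is an ideal, these give the transfer immediately. Finally, before invoking the principal-function machinery you must record that $T$ and $T'$ are completely non-normal (Corollary~\ref{cly-uni}(ii)) and that $\sigma_e(S)=\sigma_{ap}(S)=\partial\mathbb D$ for $S\in\{T,T'\}$ (via Corollary~\ref{sp-dual}, Lemma~\ref{Jan} and \cite[Theorem~XI.6.8]{Co}), so that $\mathrm{ind}_S$ is constant on $\mathbb D$ and vanishes off $\overline{\mathbb D}$; your index computation is otherwise in line with the paper's, up to the sign convention for the principal function.
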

The proofs of Theorems~\ref{wsp} and \ref{bst} will be given
in Section~\ref{Sec.5}.
   \begin{remark}
Let $T$ be as in Theorem~\ref{bst}. Then, in
particular, we have
   \begin{align*}
\mathrm{tr}\,[T'^*, T'] = \mathrm{tr}\,[T^*, T]
= \dim \ker T^*.
   \end{align*}
Further, the {\it Pincus principal function} of
$T'$ is equal to $-\dim \ker T^*$ times the
characteristic function of the open unit disk
(see \eqref{leba}). The reader is referred to
\cite{C-P77} for the definition of the Pincus
principal function.
   \hfill $\diamondsuit$
   \end{remark}
The paper is organized as follows. In Section~\ref{Sec.2}, we
formally introduce the class of weakly concave operators. In
particular, we discuss their elementary properties, including
their relation to the theory of hyponormal operators. In
Section~\ref{Sec.3}, we provide examples of $3$-isometric
weakly concave operators which are not concave (see
Example~\ref{przyk}). In Section~\ref{Sec.4}, we establish
the spectral dichotomy principle for left-invertible
operators (see Theorem~\ref{main}). This principle is a key
component of the proofs of Theorems~\ref{wsp} and \ref{bst}
(see Section~\ref{Sec.5}). Both theorems are generalized in
Section~\ref{Sec.6} to the case of operators of class
$\mathcal A_k$, where $k\in \zbb_+ \cup \{\infty\}$ (see
Theorems~\ref{Theorem2.7*} and \ref{bst-Jan}). If the
underlying Hilbert space is infinite dimensional, then the
class $\mathcal A_0$ is larger than the class of concave
operators, the class $\mathcal A_1$ consists precisely of
weakly concave operators and the sequence $\{\mathcal
A_k\}_{k=0}^{\infty}$ is strictly increasing (see
Example~\ref{separ-1}). In Section~\ref{Sec.7}, we present
necessary and sufficient conditions for some restrictions of
the Cauchy dual operator of a left-invertible operator to be
hyponormal (see Propositions~\ref{ajaj-1} and \ref{inft-1}).
The last section is devoted to $C^*$- and $W^*$-algebraic
analogues of the classes $\mathcal A_k.$
   \section{\label{Sec.2}An invitation to weakly concave operators}
It is well known that any concave operator
admits a Wold-type decomposition (see
\cite[Theorem 3.6]{S01}). Since concave
$m$-isometries are necessarily $2$-isometries
(see \cite[Proposition 4.9]{CC}), the above
result is insufficient to obtain a Wold-type
decomposition for concave strict $m$-isometries
if $m > 2.$ As a first step towards obtaining
the Wold-type decomposition for expansive
$m$-isometries, we consider a class of operators
containing concave operators and some strict
$m$-isometries with $m> 2.$ Furthermore, in view
of the fact that the Cauchy dual of an expansive
$m$-isometry is not necessarily hyponormal (in
contrast to the case of concave operators, see
\cite[p.\ 294]{Sh02} or
\cite[Theorem~2.9]{C07}), it is natural to
search for a class of left-invertible operators
satisfying a hyponormality-like condition. This
motivates us to define the following notion,
which is central to the present article.
   \begin{definition} \label{def}
A left-invertible operator $T \in \mathcal
B(\hh)$ is said to be {\it weakly concave} if it
satisfies the following conditions:
   \begin{align} \label{wc1}
\sigma_{ap}(T) & \subseteq \partial \mathbb D,
   \\ \label{wc2}
\|x\| & \leqslant \|Tx\|, \quad x \in \hh,
   \\ \label{wc}
\|T^*x\|' & \leqslant \|Tx\|', \quad x \in \hh,
   \end{align}
where $\|\cdot\|'$ is the norm on $\hh$ given by
$\|x\|':=\|T'x\|$ for $x \in \hh$ and $T'$
denotes the Cauchy dual of $T$.
   \end{definition}
   \begin{remark} \label{rmk-Jan}
In view of \cite[Lemma~3.1]{JS} and the inclusion $\partial
\sigma(T) \subseteq \sigma_{ap}(T)$ (see
\cite[Proposition~VII.6.7]{Co}), it is easy to see that,
under the assumption \eqref{wc2}, the condition \eqref{wc1}
is equivalent to $\sigma(T) \subseteq \overline{\mathbb D}$
(and to $r(T) \leqslant 1$).
   \hfill $\diamondsuit$
   \end{remark}
According to \cite[Lemma~1]{R} and Remark~\ref{rmk-Jan},
concave operators satisfy the conditions \eqref{wc1} and
\eqref{wc2}. Hence, trying to generalize the concept of
concavity, we included them in Definition~\ref{def}.
Moreover, since the concavity is antithetical to the
hyponormality (see \cite[p.\ 646]{C07}), it is reasonable to
expect that the third condition \eqref{wc}, resembling the
hyponormality, may yield some kind of concavity. Although at
first glance it is unclear why we use the term ``weakly
concave'', the following theorem sheds more light on this by
connecting, via Cauchy dual, the theory of weakly concave
operators with the theory of hyponormal operators.
   \begin{proposition} \label{bst-lemma}
Let $T \in \mathcal B(\hh)$ be a left-invertible operator.
Then the following conditions are equivalent{\em :}
   \begin{enumerate}
   \item[(i)] $T$ satisfies \eqref{wc},
   \item[(ii)] $T(T^*T)^{-1}T^* \Le T^*(T^*T)^{-1}T,$
   \item[(iii)] $P_{\ob{T}} \Le T^*(T^*T)^{-1}T,$
   \item[(iv)] the restriction of $T'$ to its range is hyponormal.
   \end{enumerate}
In particular, any concave operator is weakly concave.
   \end{proposition}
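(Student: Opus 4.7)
The plan is to treat (i)$\Leftrightarrow$(ii)$\Leftrightarrow$(iii) by routine rewriting of inner-product expressions, and then to establish (iii)$\Leftrightarrow$(iv) by parametrising $\ob{T}$ through the action of $T$.

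For (i)$\Leftrightarrow$(ii), I would first record the identity $T'^{*}T' = (T^{*}T)^{-1}T^{*}T(T^{*}T)^{-1} = (T^{*}T)^{-1}$, so that $\|y\|'^{2} = \inp{(T^{*}T)^{-1}y}{y}$. Applying this with $y=T^{*}x$ and $y=Tx$ and sliding the outer factor across the inner product rewrites (i) as
\[
\inp{T(T^{*}T)^{-1}T^{*}x}{x} \le \inp{T^{*}(T^{*}T)^{-1}Tx}{x}, \qquad x \in \hh,
\]
which is (ii). The passage (ii)$\Leftrightarrow$(iii) is then immediate from Proposition~\ref{polur}(iii), which identifies the left-hand side $T(T^{*}T)^{-1}T^{*}$ as $T'T^{*} = P_{\ob{T}}$.

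The heart of the argument is (iii)$\Leftrightarrow$(iv). I would first observe that $\ob{T'}=\ob{T}$ (since $T' = T(T^{*}T)^{-1}$ and $(T^{*}T)^{-1}$ is invertible), so $\ob{T}$ is invariant under $T'$ and $A:=T'|_{\ob{T}}$ is a well-defined bounded operator on $\ob{T}$ with $A^{*}y = P_{\ob{T}}T'^{*}y$ for $y \in \ob{T}$. The key idea is to parametrise $\ob{T}$ by $y = Tx$, $x \in \hh$ --- an admissible parametrisation because $T$ is a bijection onto the closed subspace $\ob{T}$ --- and to exploit the identities $T'^{*}T' = (T^{*}T)^{-1}$ and, crucially, $T'^{*}T = I$. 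These yield
\[
\|Ay\|^{2} = \inp{T^{*}(T^{*}T)^{-1}Tx}{x}, \qquad \|A^{*}y\|^{2} = \|P_{\ob{T}}x\|^{2},
\]
so that hyponormality of $A$ translates into (iii) verbatim.

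For the final assertion, \cite[Lemma 1]{R} gives expansivity (hence \eqref{wc2}) of a concave $T$, and combined with Remark~\ref{rmk-Jan} also \eqref{wc1}. For \eqref{wc} I plan to invoke the classical result (cited as \cite[p.\ 294]{Sh02}; see also \cite[Theorem~2.9]{C07}) that the Cauchy dual of a concave operator is hyponormal on all of $\hh$; since hyponormality passes to restrictions on invariant subspaces, $A=T'|_{\ob{T}}$ is hyponormal, (iv) holds, and \eqref{wc} follows by the equivalence just proved. The main obstacle I anticipate is simply spotting the substitution $y = Tx$ that makes (iii)$\Leftrightarrow$(iv) transparent; everything else is bookkeeping.
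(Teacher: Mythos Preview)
Your proposal is correct and follows essentially the same route as the paper: the paper uses the identities $T'^{*}T'=(T^{*}T)^{-1}$ and $T'T^{*}=P_{\ob{T}}$ to get (i)$\Leftrightarrow$(ii)$\Leftrightarrow$(iii), and then establishes the link with (iv) by parametrising $\ob{T'}=\ob{T}$ (the paper writes $z=T'x$ and implicitly reparametrises, which amounts to your substitution $y=Tx$). The ``in particular'' clause is handled identically, via \cite[Lemma~1]{R}, Shimorin's hyponormality of the Cauchy dual, and the stability of hyponormality under restriction to invariant subspaces.
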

   \begin{proof}
That the conditions (i)-(iii) are equivalent is immediate
from the following identities (see Proposition~\ref{polur}):
   \begin{align} \label{tra-ta}
T'^*T' = (T^*T)^{-1} \quad \text{and} \quad
T^{\prime}T^*= P_{\ob{T}}.
   \end{align}
   To see that the conditions (i) and (iv) are
equivalent, note that $T'|_{\ob{T'}}$ is
hyponormal if and only if
   \begin{align*}
\|(T'|_{\ob{T'}})^*T'x\| \leqslant \|T'^2x\|,
\quad x \in \hh.
   \end{align*}
Using \eqref{tra-ta} and the equalities
$(T'|_{\ob{T'}})^* = P_{\ob{T'}}T'^*|_{\ob{T'}}$
and $\ob{T'}=\ob{T}$, we conclude that the
operator $T'|_{\ob{T'}}$ is hyponormal if and
only if
   \begin{align*}
\|T'T^*x\| \leqslant \|T'Tx\|, \quad x \in \hh.
   \end{align*}
The above inequality coincides with \eqref{wc}.
This shows that the conditions (i)-(iv) are
equivalent.

To show the remaining part, recall that any concave operator
satisfies \eqref{wc1} and \eqref{wc2}. Since the Cauchy dual
of a concave operator is hyponormal (see \cite[p.\
294]{Sh02}) and the restriction of a hyponormal operator to
its closed invariant subspace is hyponormal (see
\cite[Proposition~II.4.4]{Co1}), an application of the
implication (iv)$\Rightarrow$(i) completes the proof.
   \end{proof}
Our next goal is to show that the only
invertible weakly concave operators are unitary
operators (cf.\ Corollary~\ref{cly-uni}). In
fact, this is a consequence of a much more
general fact proved below.
   \begin{proposition} \label{cly-uni-prop}
Let $T \in \mathcal B(\hh)$ satisfy \eqref{wc1}
and let $T'|_{\hh_u'}$ be hyponormal, where
$\hh_u':=\bigcap_{n=0}^{\infty} \ob{T^{\prime
n}}$. Then the following statements are
valid{\em :}
   \begin{enumerate}
   \item[(i)] $T$ is unitary if  $T$ is invertible,
   \item[(ii)] $T$ is completely
non-normal if $T$ is analytic.
   \end{enumerate}
   \end{proposition}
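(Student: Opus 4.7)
\emph{Overall approach.} The plan is to prove (i) first by forcing $T$ to be unitary through Putnam's inequality applied to the Cauchy dual $T'$, and then to deduce (ii) by restricting to a hypothetical normal reducing subspace of $\hh$ and invoking (i) there.

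\emph{Sketch for (i).} Assume $T$ is invertible. Then $(T^*T)^{-1} = T^{-1}(T^*)^{-1}$, so the definition of the Cauchy dual yields $T' = T(T^*T)^{-1} = (T^*)^{-1} = (T^{-1})^*$; in particular $T'$ is invertible, whence $\ob{T'^n} = \hh$ for every $n \in \zbb_+$, $\hh_u' = \hh$, and by hypothesis $T'$ is hyponormal on all of $\hh$. I next upgrade \eqref{wc1} to $\sigma(T) \subseteq \partial \mathbb D$: since $\partial \sigma(T) \subseteq \sigma_{ap}(T) \subseteq \partial \mathbb D$ and $0 \notin \sigma(T)$, each of the connected open sets $\mathbb D$ and $\mathbb C \setminus \overline{\mathbb D}$ meets the resolvent set of $T$ while being disjoint from $\partial \sigma(T)$, so a standard clopen-decomposition argument forces both to lie entirely in the resolvent set, whence $\sigma(T) \subseteq \partial \mathbb D$. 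Consequently $\sigma(T') = \{1/\bar\lambda : \lambda \in \sigma(T)\} \subseteq \partial \mathbb D$ has planar Lebesgue measure zero, so Putnam's inequality yields $[T'^*, T'] = 0$; a normal operator with spectrum in $\partial \mathbb D$ is unitary, so $T'$ is unitary, and the identity $T'^*T' = (T^*T)^{-1}$ then forces $T^*T = I$, making the invertible isometry $T$ unitary.

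\emph{Sketch for (ii).} Suppose $\mathcal M$ is a nonzero closed subspace of $\hh$ reducing $T$ with $T|_{\mathcal M}$ normal; the goal is to contradict the analyticity of $T$. Since $\mathcal M$ reduces $T$, it reduces $T^*T$ and, via the bounded Borel functional calculus, $(T^*T)^{-1}$; hence it reduces $T' = T(T^*T)^{-1}$, with $T'|_{\mathcal M} = (T|_{\mathcal M})'$. As $T$ is bounded below by \eqref{wc1}, so is $T|_{\mathcal M}$, and a bounded-below normal operator is invertible; thus $T|_{\mathcal M}$ and hence $T'|_{\mathcal M}$ are invertible on $\mathcal M$, so $(T'|_{\mathcal M})^n(\mathcal M) = \mathcal M$ for every $n$ and $\mathcal M \subseteq \hh_u'$. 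Since $\mathcal M$ is $T'$-invariant and contained in $\hh_u'$, the restriction $T'|_{\mathcal M}$ is hyponormal as a restriction of the hyponormal operator $T'|_{\hh_u'}$. Combined with $\sigma_{ap}(T|_{\mathcal M}) \subseteq \sigma_{ap}(T) \subseteq \partial \mathbb D$ and the invertibility of $T|_{\mathcal M}$, part (i) applied on $\mathcal M$ gives $T|_{\mathcal M}$ unitary; in particular $T^n(\mathcal M) = \mathcal M$ for every $n$, so $\mathcal M \subseteq \hh_u = \{0\}$, contradicting $\mathcal M \ne \{0\}$.

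\emph{Main obstacle.} The principal technical point is the passage from $\sigma_{ap}(T) \subseteq \partial \mathbb D$ to $\sigma(T) \subseteq \partial \mathbb D$ in (i) without invoking the expansivity condition \eqref{wc2} (so Remark~\ref{rmk-Jan} does not apply directly); once this spectral inclusion is in hand, Putnam's inequality finishes (i), and (ii) reduces to (i) by a routine transfer of structure through the Cauchy dual to the reducing subspace $\mathcal M$.
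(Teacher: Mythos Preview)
Your proof is correct. Both parts work, but the routes differ from the paper's in interesting ways.

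For (i), the paper proceeds almost identically up to the point where $T'$ is seen to be hyponormal with $\sigma(T)\subseteq\partial\mathbb D$. Instead of Putnam's inequality, however, the paper observes that $T'=(T^*)^{-1}$ being hyponormal forces $T^*$ itself to be hyponormal (the inverse of an invertible hyponormal operator is hyponormal), and then invokes Stampfli's theorem that a hyponormal operator with spectrum in $\partial\mathbb D$ is unitary. Your use of Putnam's inequality on $T'$ is an equally valid and arguably more widely known tool; both arguments rely on the same spectral inclusion $\sigma(T')\subseteq\partial\mathbb D$. Incidentally, the paper states the passage from $\sigma_{ap}(T)\subseteq\partial\mathbb D$ to $\sigma(T)\subseteq\partial\mathbb D$ in one line; your clopen argument spells out exactly why invertibility is needed there.

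For (ii), your argument is correct but substantially more elaborate than necessary. The paper simply notes that a nonzero normal direct summand $N$ of $T$ satisfies $\sigma(N)=\sigma_{ap}(N)\subseteq\sigma_{ap}(T)\subseteq\partial\mathbb D$, hence $N$ is unitary, contradicting the fact that analytic operators are completely non-unitary. In particular, the paper's proof of (ii) does \emph{not} use the hyponormality hypothesis on $T'|_{\hh_u'}$ at all, which is worth knowing: statement (ii) holds for any analytic operator satisfying \eqref{wc1}. Your detour through the Cauchy dual, the inclusion $\mathcal M\subseteq\hh_u'$, and a second application of (i) works, but it obscures this independence.
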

   \begin{proof}
(i) Assume that $T$ is invertible. Since
$\partial \sigma(T) \subseteq \sigma_{ap}(T)$, we
infer from \eqref{wc1} that $\sigma(T) \subseteq
\partial \mathbb D$. Hence
   \begin{align} \label{fr-fy}
\sigma(T^*) = \sigma(T)^* \subseteq
\partial \mathbb D.
   \end{align}
Noting that the invertibility of $T$ implies the
invertibility of $T'$, we deduce that $\hh_u'=\hh$. From the
assumption that $T'|_{\hh_u'}$ is hyponormal, we see that
$T'$ is hyponormal. However, by invertibility of $T$,
$T'=T^{*-1},$ which by \cite[Proposition~II.4.4]{Co1} implies
that $T^*$ is hyponormal. Hence, by \eqref{fr-fy} and the
second corollary on \cite[Pg 473]{St-0}, $T$ is unitary.

(ii) Assume now that $T$ is analytic. Suppose to the contrary
that $T$ has a non-zero normal direct summand, say $N$. Then
by \eqref{wc1}, we have
   \begin{align*}
\sigma(N) = \sigma_{ap}(N) \subseteq \sigma_{ap}(T) \subseteq
\partial \mathbb D.
   \end{align*}
Hence, $N$ is unitary. This contradicts the fact that every
analytic operator is completely non-unitary.
   \end{proof}
The following corollary is immediate from
Propositions~\ref{bst-lemma} and
\ref{cly-uni-prop} and
\cite[Proposition~II.4.4]{Co1}.
   \begin{corollary} \label{cly-uni}
If $T \in \mathcal B(\hh)$ is weakly concave,
then the following statements are valid{\em :}
   \begin{enumerate}
   \item[(i)] $T$ is unitary if  $T$ is invertible,
   \item[(ii)] $T$ is completely
non-normal if $T$ is analytic.
   \end{enumerate}
   \end{corollary}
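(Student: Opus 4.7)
The plan is to reduce the statement directly to Proposition~\ref{cly-uni-prop}. Since $T$ is weakly concave, it automatically satisfies \eqref{wc1} by Definition~\ref{def}, so the only nontrivial hypothesis of Proposition~\ref{cly-uni-prop} that needs to be verified is the hyponormality of $T'|_{\hh_u'}$, where $\hh_u' = \bigcap_{n=0}^{\infty} \ob{T'^n}$.

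To establish this, I would first apply Proposition~\ref{bst-lemma}: condition~\eqref{wc} is equivalent to $T'|_{\ob{T'}}$ being hyponormal, so the weak concavity of $T$ gives immediately that $T'$ restricted to its range is hyponormal. Next I would note two elementary facts about the intersection $\hh_u'$. First, $\hh_u' \subseteq \ob{T'}$, since $\hh_u'$ is an intersection of sets one of which is $\ob{T'}$ itself. Second, $\hh_u'$ is invariant under $T'$: if $x \in \hh_u'$, then for each $n \geq 1$ we can write $x = T'^{n-1}y$ for some $y \in \hh$, whence $T'x = T'^n y \in \ob{T'^n}$, showing $T'x \in \hh_u'$. (Note also that $\ob{T'}$ is closed, so the restriction is well defined as an operator on a Hilbert space.)

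Combining these two observations, $\hh_u'$ is a closed $T'$-invariant subspace of the Hilbert space $\ob{T'}$ on which $T'|_{\ob{T'}}$ acts as a hyponormal operator. By \cite[Proposition~II.4.4]{Co1}, the restriction of a hyponormal operator to a closed invariant subspace is hyponormal, so $T'|_{\hh_u'}$ is hyponormal. With this, both hypotheses of Proposition~\ref{cly-uni-prop} are in place, and its conclusions~(i) and~(ii) yield exactly~(i) and~(ii) of the corollary.

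There is no real obstacle here; the only subtlety worth checking is the invariance of $\hh_u'$ under $T'$ (rather than merely under $T'|_{\ob{T'}}$) and the verification that $\hh_u'$ actually sits inside $\ob{T'}$, so that \cite[Proposition~II.4.4]{Co1} can legitimately be applied. Both are immediate once one unwinds the definitions, which is why the authors flag the result as a corollary rather than a theorem.
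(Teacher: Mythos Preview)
Your proposal is correct and follows exactly the route the paper takes: the paper states that the corollary is immediate from Propositions~\ref{bst-lemma} and~\ref{cly-uni-prop} together with \cite[Proposition~II.4.4]{Co1}, and you have simply made explicit the two easy verifications (that $\hh_u' \subseteq \ob{T'}$ and that $\hh_u'$ is $T'$-invariant) needed to invoke the last reference.
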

   \section{\label{Sec.3}Examples}
In this section, we show that there are weakly concave
operators which are not concave (see Example~\ref{przyk}
below). This is done by means of unilateral weighted shifts.
Recall that for a bounded sequence $\{w_n\}_{n = 0}^{\infty}$
of positive real numbers, there exists a unique operator
$W\in \mathcal B(\ell^2)$ (called a {\it unilateral weighted
shift with weight sequence $\{w_n\}_{n = 0}^{\infty}$}) such
that
   \begin{align*}
We_n \mathrel{\mathop:}= w_n e_{n + 1}, \quad n
\in \zbb_+,
   \end{align*}
where $\{e_n\}_{n = 0}^{\infty}$ stands for the standard
orthonormal basis of $\ell^2,$ the Hilbert space of square
summable complex sequences indexed by $\zbb_+.$ Note that $W$
is always analytic. Before providing the aforementioned
example, we need some auxiliary results. We begin by
characterizing those unilateral weighted shifts whose given
power is left-invertible and satisfies the
condition~\eqref{wc}.
   \begin{proposition} \label{W-w-c}
Let $W$ be a unilateral weighted shift with
weight sequence $\{w_n\}_{n=0}^{\infty}$ and let
$k$ be a positive integer. Then the following
statements are equivalent{\em :}
   \begin{enumerate}
   \item[(i)]
$W^k$ is left-invertible and satisfies
\eqref{wc},
   \item[(ii)]
$\displaystyle \inf_{_{n \in \zbb_+}}w_{n}
> 0$ and $\displaystyle \prod_{j=0}^{k-1}
\frac{w_{n+j}}{w_{k+n+j}} \geqslant 1$ for every
integer $n \geqslant k.$
   \end{enumerate}
   \end{proposition}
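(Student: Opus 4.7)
\medskip

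My plan is to translate each side of the equivalence into an elementary condition on the ``$k$-step weights''
$\lambda_n := \prod_{j=0}^{k-1} w_{n+j}$, using the diagonal action of the relevant operators on the basis $\{e_n\}_{n\in\zbb_+}$.

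First I would observe that $W^k e_n = \lambda_n e_{n+k}$, so $W^k$ is a ``gap-$k$'' weighted shift. Since $(W^k)^*W^k$ acts on $e_n$ as multiplication by $\lambda_n^2$, the operator $W^k$ is left-invertible iff $\inf_n \lambda_n>0$. Because $\{w_n\}$ is bounded above (say by $M$) and $\lambda_n\le M^{k-1} w_{n+j}$ for each $j\in\{0,\dots,k-1\}$, the condition $\inf_n\lambda_n>0$ is equivalent to $\inf_n w_n>0$; this handles the ``left-invertible'' half of (i)$\Leftrightarrow$(ii).

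Next, assuming $W^k$ is left-invertible, I would use the equivalence (i)$\Leftrightarrow$(iii) of Proposition~\ref{bst-lemma} to replace the inequality \eqref{wc} by
\[
P_{\ob{W^k}} \;\leqslant\; (W^k)^*\bigl((W^k)^*W^k\bigr)^{-1} W^k .
\]
Both operators are diagonal in the basis $\{e_n\}$: the range of $W^k$ is $\overline{\mathrm{span}}\{e_m : m\Ge k\}$, so $P_{\ob{W^k}}$ has eigenvalue $0$ on $e_n$ for $n<k$ and $1$ for $n\Ge k$; a direct computation gives $(W^k)^*((W^k)^*W^k)^{-1} W^k e_n = (\lambda_n^2/\lambda_{n+k}^2)\, e_n$. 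Comparing diagonal entries, the inequality is automatic for $n<k$ and for $n\Ge k$ reduces to $\lambda_n\Ge \lambda_{n+k}$, i.e.\
\[
\prod_{j=0}^{k-1} \frac{w_{n+j}}{w_{k+n+j}} \;\geqslant\; 1, \qquad n\Ge k,
\]
which is exactly the second inequality in (ii).

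The argument is essentially a bookkeeping exercise, so I do not anticipate a conceptual obstacle; the main thing to be careful about is distinguishing the two ranges $n<k$ and $n\Ge k$ (the inequality is vacuous on the first block because $P_{\ob{W^k}}$ vanishes there, which is precisely why condition (ii) is only imposed for $n\Ge k$) and making sure that the identifications $T(T^*T)^{-1}T^*=P_{\ob{T}}$ and $T^*(T^*T)^{-1}T = \mathrm{diag}(\lambda_n^2/\lambda_{n+k}^2)$ are applied after left-invertibility has been secured, so that $(T^*T)^{-1}$ indeed exists as a bounded operator.
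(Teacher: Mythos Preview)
Your proposal is correct and follows essentially the same route as the paper: both compute the diagonal action of $P_{\ob{W^k}}$ and of $(W^k)^*\big((W^k)^*W^k\big)^{-1}W^k$ on the basis $\{e_n\}$ and then invoke the equivalence (i)$\Leftrightarrow$(iii) of Proposition~\ref{bst-lemma}. The only cosmetic difference is that the paper handles the left-invertibility equivalence via the general fact ``$T$ left-invertible $\Leftrightarrow$ $T^m$ left-invertible'' rather than your direct inequality $\lambda_n\le M^{k-1}w_{n+j}$, but the two arguments are interchangeable.
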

   \begin{proof}
First, observe that if $T\in \mathcal B(\hh),$ then for any
positive integer $m$, $T$ is left-invertible if and only if
$T^m$ is left-invertible. Hence, $W^k$ is left-invertible if
and only if $\inf_{n \in \zbb_+} w_n > 0$. This means that
there is no loss of generality in assuming that all powers of
$W$ are left-invertible. Noting that
   \allowdisplaybreaks
   \begin{align*}
W^k e_n & = \bigg(\prod_{j=0}^{k-1}
w_{n+j}\bigg) e_{k+n}, \quad n \in \zbb_+,
   \\
W^{*k} e_{k+n} & = \bigg(\prod_{j=0}^{k-1}
w_{n+j}\bigg) e_{n}, \quad n \in \zbb_+,
   \end{align*}
we deduce that
   \begin{align} \label{ws-as}
W^{*k}(W^{*k}W^k)^{-1}W^k e_n =
\prod_{j=0}^{k-1} \frac{w_{n+j}^2}{w_{k+n+j}^2}
e_{n}, \quad n \in \zbb_+.
   \end{align}
Clearly
   \begin{align} \label{ws-at}
P_{\ob{W^k}} e_n =
   \begin{cases}
0 & \text{if } n=0, \ldots k-1,
   \\
e_n & \text{if } n \Ge k.
   \end{cases}
   \end{align}
Combining \eqref{ws-as} and \eqref{ws-at} with
the equivalence (i)$\Leftrightarrow$(iii) of
Proposition~\ref{bst-lemma} completes the proof.
   \end{proof}
The next result we need is closely related to
\cite[Proposition 2.3]{AC}. It can be deduced
from \cite[Lemma~3.1]{ACJS-2} by applying the
fact that the (entry-wise) product of Hausdorff
moment sequences is a Hausdorff moment sequence
(see \cite[p.\ 179]{B-C-R}; see also
\cite[Subsection~6.2.4]{B-C-R} for the
definition of the Hausdorff moment sequence) and
by using the fundamental theorem of algebra (see
\cite[Theorem~V.3.19]{Hung}).
   \begin{proposition} \label{hau-zdrf}
Suppose that $p$ is a polynomial in one real
variable which has only negative real roots and
whose leading coefficient is positive. Then
$p(n)
> 0$ for all $n\in \zbb_+$ and $\{1/p(n)\}_{n=0}^{\infty}$
is a Hausdorff moment sequence.
   \end{proposition}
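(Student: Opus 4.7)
The plan is to reduce the statement to the elementary fact that $\{1/(n+a)\}_{n \in \zbb_+}$ is a Hausdorff moment sequence for each $a > 0$, and then invoke the two tools explicitly named in the preamble to the proposition: the fundamental theorem of algebra and the stability of Hausdorff moment sequences under pointwise products.

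First I would apply the fundamental theorem of algebra to factor
\[
p(x) = c \prod_{i=1}^{d} (x + a_i),
\]
where $c > 0$ is the leading coefficient and $a_1,\ldots,a_d > 0$ are the absolute values of the (negative) real roots of $p$, listed with multiplicity. Since each $n + a_i > 0$ for $n \in \zbb_+$, this factorization immediately gives $p(n) > 0$ for every $n \in \zbb_+$, which handles the first half of the conclusion.

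Next, for each fixed $a > 0$, I would verify directly that $\{1/(n+a)\}_{n \in \zbb_+}$ is the moment sequence on $[0,1]$ of the positive finite Borel measure $d\mu_a(t) = t^{a-1}\,dt$, via the identity
\[
\frac{1}{n+a} \;=\; \int_0^1 t^{n+a-1}\, dt \;=\; \int_0^1 t^n \, d\mu_a(t), \quad n \in \zbb_+.
\]
Multiplication by the positive scalar $1/c$ corresponds to rescaling the measure and hence preserves the moment-sequence property.

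Finally, combining the $d$ factors by induction using the fact that the entry-wise product of Hausdorff moment sequences is again a Hausdorff moment sequence (\cite[p.~179]{B-C-R}), I would conclude that
\[
\frac{1}{p(n)} \;=\; \frac{1}{c} \prod_{i=1}^{d} \frac{1}{n + a_i}, \quad n \in \zbb_+,
\]
is a Hausdorff moment sequence. There is no real obstacle in this argument: the only ingredient imported as a black box is the product stability, and once the factorization is in hand the rest is bookkeeping. The only place one should be mildly careful is in tracking that each $\mu_a$ is a finite positive Borel measure on the closed interval $[0,1]$ (so its moments fit the standard definition of a Hausdorff moment sequence), which is clear since $\int_0^1 t^{a-1}\,dt = 1/a < \infty$ for $a > 0$.
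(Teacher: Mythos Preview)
Your proof is correct and follows essentially the same approach as the paper's sketch: factor via the fundamental theorem of algebra into linear pieces and then invoke the product stability of Hausdorff moment sequences from \cite[p.~179]{B-C-R}. The only cosmetic difference is that you exhibit the representing measure $t^{a-1}\,dt$ for each linear factor explicitly, whereas the paper defers this step to \cite[Lemma~3.1]{ACJS-2}.
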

The following characterization of weakly concave
$m$-isometric unilateral weighted shifts is of
independent interest.
   \begin{proposition} \label{w-con-ch} Suppose that
$W$ is a unilateral weighted shift with weight
sequence $\{w_n\}_{n = 0}^{\infty}$ and $m \Ge
2$. Then the following statements are
equivalent{\em :}
   \begin{enumerate}
   \item[(i)] $W$ is a weakly concave
$m$-isometry,
   \item[(ii)] there exists a polynomial $p\in
\mathbb{R}[x]$ of degree at most $m-1$ such that
   \begin{gather} \label{star}
\text{$p(n) > 0$ and $w^2_n =
\frac{p(n+1)}{p(n)}$ for all $n \in \zbb_+,$}
   \\ \label{e-concave1}
\text{$p(n) \leqslant p(n+1)$ for all $n \in
\zbb_+,$}
   \\ \label{e-concave}
\text{$p(n)p(n+2) \leqslant p(n+1)^2$ for all $n
\in \zbb_+\backslash \{0\}.$}
   \end{gather}
   \end{enumerate}
Moreover, if $p$ is as in Proposition~{\em \ref{hau-zdrf}},
then \eqref{e-concave1} and \eqref{e-concave} hold.
   \end{proposition}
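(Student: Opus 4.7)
My plan is to diagonalize everything in the standard basis $\{e_n\}_{n=0}^{\infty}$, turning each operator statement in (i) into a scalar condition on the weights $\{w_n\}$, and then reinterpret those scalar conditions in terms of the polynomial $p$. The three ingredients I will use are: (a) the classical finite-difference characterization of $m$-isometric weighted shifts, (b) Proposition~\ref{W-w-c} specialized to $k=1$, and (c) the general fact that the approximate point spectrum of any $m$-isometry is contained in $\partial \mathbb{D}$, cited earlier as \cite[Lemma~1.21]{Ag-St}.

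For the implication (i)$\Rightarrow$(ii), I set $c_n := \|W^n e_0\|^2 = \prod_{j=0}^{n-1}w_j^2$ with $c_0 := 1$. Since $W^{*k}W^k$ is diagonal with eigenvalue $c_{n+k}/c_n$ at $e_n$, the identity $B_m(W) = 0$ applied to each $e_n$ rewrites as $\sum_{k=0}^m (-1)^k \binom{m}{k} c_{n+k} = 0$ for every $n \in \zbb_+$, meaning the $m$-th forward difference of $\{c_n\}$ vanishes. A standard argument then forces $c_n = p(n)$ for some $p \in \mathbb{R}[x]$ of degree at most $m-1$ with $p(0) = 1$; positivity of each $c_n$ gives $p(n) > 0$, and $w_n^2 = c_{n+1}/c_n = p(n+1)/p(n)$ establishes \eqref{star}. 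Next, condition \eqref{wc2} is equivalent to $w_n^2 \Ge 1$ for all $n$, which rewrites as \eqref{e-concave1}. Finally, Proposition~\ref{W-w-c} with $k=1$ tells me that \eqref{wc} (under left-invertibility) is equivalent to $w_n \Ge w_{n+1}$ for $n \Ge 1$; squaring and substituting $w_n^2 = p(n+1)/p(n)$ transforms this into \eqref{e-concave}.

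For the converse (ii)$\Rightarrow$(i), I reverse each translation. Given $p$ as in (ii), define $w_n := (p(n+1)/p(n))^{1/2}$; since $\deg p \Le m-1$, the ratio $p(n+1)/p(n)$ tends to $1$, so $\{w_n\}$ is bounded and the associated weighted shift $W$ is well defined. Then $c_n = p(n)/p(0)$ is a polynomial in $n$ of degree at most $m-1$, so the $m$-th forward difference of $\{c_n\}$ vanishes and $W$ is an $m$-isometry; hence \eqref{wc1} is automatic from \cite[Lemma~1.21]{Ag-St}. Conditions \eqref{e-concave1} and \eqref{e-concave} then yield \eqref{wc2} and \eqref{wc} respectively by reversing the translations above (using Proposition~\ref{W-w-c} for \eqref{wc}).

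The ``moreover'' claim is a short direct computation. Writing $p(x) = c\prod_{i}(x + a_i)$ with $c > 0$ and each $a_i > 0$, I observe that $p$ is strictly increasing on $[0,\infty)$, yielding \eqref{e-concave1}, while $\log p(x) = \log c + \sum_{i} \log(x + a_i)$ is a concave function on $(-\min_{i} a_i, \infty)$, so discrete concavity at the integers gives $2\log p(n+1) \Ge \log p(n) + \log p(n+2)$, i.e., \eqref{e-concave}. I expect the most delicate point of the argument to be the passage from vanishing of the $m$-th difference to the polynomial form for $c_n$; the remainder is routine translation between the weights and $p$.
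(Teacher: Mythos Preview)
Your proof of the equivalence (i)$\Leftrightarrow$(ii) is correct and follows essentially the same route as the paper: the three defining conditions of weak concavity are translated into the scalar conditions $w_n \geq 1$ and $w_n \geq w_{n+1}$ for $n \geq 1$ via Proposition~\ref{W-w-c} with $k=1$, condition \eqref{wc1} is obtained from \cite[Lemma~1.21]{Ag-St}, and $m$-isometry is identified with the polynomial form \eqref{star}. The only difference here is that the paper cites the polynomial characterization of $m$-isometric weighted shifts from \cite{AL} (also \cite{JJS,B-M-N}), whereas you rederive it from the vanishing of the $m$-th forward difference of $\{c_n\}$; both are standard and equivalent.

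For the ``moreover'' clause your argument is genuinely different. The paper appeals to Proposition~\ref{hau-zdrf} to conclude that $\{1/p(n)\}_{n=0}^{\infty}$ is a Hausdorff moment sequence, hence monotonically decreasing (giving \eqref{e-concave1}) and logarithmically convex (giving \eqref{e-concave}). You instead factor $p(x)=c\prod_i(x+a_i)$ with $a_i>0$ and argue directly that $p$ is increasing on $[0,\infty)$ and that $\log p$ is concave there. Your approach is more elementary and self-contained, avoiding the moment-sequence machinery; the paper's approach has the advantage of linking the result to Proposition~\ref{hau-zdrf}, which is used elsewhere in the section.
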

   \begin{proof}
By \cite[Theorem~2.1]{AL} (see also
\cite[Proposition~A.2]{JJS} and \cite[Theorem~3.4]{B-M-N}),
$W$ is an $m$-isometry if and only if there exists a
polynomial $p\in \mathbb{R}[x]$ of degree at most $m-1$ such
that \eqref{star} is valid. Therefore, there is no loss of
generality in assuming that $W$ is an $m$-isometry with $p\in
\mathbb{R}[x]$ of degree at most $m-1$ that satisfies
\eqref{star}. By \cite[Lemma~1.21]{Ag-St}, $W$ satisfies
\eqref{wc1}. Hence $0\notin \sigma_{ap}(W)$, or equivalently
$W$ is left-invertible. Moreover, $W$ is an expansion if and
only if $w_n \geqslant 1$ for every $n \in \zbb_+$. Hence,
applying \eqref{star} and Proposition~\ref{W-w-c} with $k=1$,
we see that the statements (i) and (ii) are equivalent.

Finally, if $p$ is as in Proposition~\ref{hau-zdrf}, then
$\{1/p(n)\}_{n=0}^{\infty}$ is a Hausdorff moment sequence
which as such is monotonically decreasing and logarithmically
convex (the former is equivalent to \eqref{e-concave1}, while
the latter implies \eqref{e-concave}).
   \end{proof}
We also need the following fact, which was proved for finite
systems of operators in \cite{CC}. For the reader's
convenience, we give its short proof below. This result is a
generalization of \cite[Proposition~3.6(ii)]{SA00}.
   \begin{proposition}[\mbox{\cite[Proposition 4.9]{CC}}] \label{gen-SA00}
Suppose that $T \in \mathcal B(\hh)$ is a concave
$m$-isometry for some $m\geqslant 2$. Then $T$ is a
$2$-isometry.
   \end{proposition}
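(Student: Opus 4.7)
The plan is to analyze, for each fixed $x \in \hh$, the nonnegative scalar sequence $a_n := \|T^n x\|^2$, $n \in \zbb_+$. The two hypotheses on $T$ can be read off the forward differences of this sequence: since $\langle B_j(T) T^n x, T^n x \rangle = \sum_{k=0}^{j} (-1)^k \binom{j}{k} a_{n+k} = (-1)^j \Delta^j a_n$, the assumption $B_m(T) = 0$ is equivalent to $\Delta^m a_n = 0$ for every $n \in \zbb_+$ and every $x \in \hh$, while the concavity $B_2(T) \Le 0$ is equivalent to $\Delta^2 a_n \Le 0$ for every $n$ and every $x$.

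First I would record that $\Delta^m a_n \equiv 0$ forces the map $n \mapsto a_n$ to be a polynomial in $n$ of degree at most $m - 1$. Combined with the other two data---the non-negativity $a_n \Ge 0$ on $\zbb_+$ and $\Delta^2 a_n \Le 0$ on $\zbb_+$---this polynomial must have degree at most $1$. The argument is a short case analysis: if $\deg a \Ge 3$, then $\Delta^2 a$ is a non-constant polynomial of degree $\deg a - 2$; the constraint $\Delta^2 a_n \Le 0$ on $\zbb_+$ forces its leading coefficient to be $\Le 0$, whence the leading coefficient of $a$ itself is $\Le 0$, and so $a_n \to -\infty$, contradicting $a_n \Ge 0$. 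If $\deg a = 2$, then $\Delta^2 a$ is a non-zero constant $c \Le 0$, which would force $c < 0$ and again $a_n \to -\infty$. Hence $\deg a \Le 1$, equivalently $\Delta^2 a_n = 0$ for every $n \in \zbb_+$.

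Finally, evaluating at $n = 0$ gives $\langle B_2(T) x, x \rangle = \Delta^2 a_0 = 0$ for every $x \in \hh$. Since $B_2(T) = I - 2T^*T + T^{*2}T^2$ is self-adjoint, this forces $B_2(T) = 0$, i.e., $T$ is a $2$-isometry. The main (and only mildly delicate) obstacle is the polynomial step: one must use \emph{both} the sign constraint $\Delta^2 a_n \Le 0$ and the non-negativity $a_n \Ge 0$, since neither alone rules out degrees $\Ge 2$.
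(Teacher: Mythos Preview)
Your proof is correct and follows essentially the same strategy as the paper's: for each vector the sequence $a_n=\|T^n x\|^2$ is a polynomial in $n$ (from the $m$-isometry hypothesis), concave (from $B_2(T)\Le 0$), and nonnegative, hence affine, which gives $B_2(T)=0$. The only difference is packaging: the paper invokes Richter's inequality $\|T^n h\|^2\Le \|h\|^2+n(\|Th\|^2-\|h\|^2)$ and the Agler--Stankus polynomial growth result as black boxes, whereas you derive both directly via finite differences.
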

   \begin{proof}
It follows from \cite[Lemma~1(b)]{R} that
   \begin{align} \label{pulim}
\|T^n h\|^2 \Le \|h\|^2 + n (\|Th\|^2 -
\|h\|^2), \quad h \in \hh, \, n\in \zbb_+.
   \end{align}
By \cite[p.\ 389]{Ag-St} (see also
\cite[Theorem~3.3]{JJS}), for every $h \in \hh$
the expression $\|T^n h\|^2$ is a polynomial
in\footnote{\label{fyto-1}i.e., there exists a
polynomial $p\in \mathbb{C}[z]$ such that
$p(n)=\|T^n h\|^2$ for all $n\in \zbb_+$.} $n$
of degree at most $m-1$. Combined with
\eqref{pulim}, this implies that $m \Le 2$,
which completes the proof.
   \end{proof}
We are now ready to construct a class of strict
$3$-isometric weakly concave operators. Note
that such operators cannot be concave which is
immediate from Proposition~\ref{gen-SA00}.
   \begin{example} \label{przyk}
Let $p\in \mathbb{R}[x]$ be the polynomial
defined by $p(x)=|x+z|^2$ for $x\in \mathbb{R}$,
where $z=u+iv \in \mathbb C$ with $u, v \in
\mathbb{R}$ such that $v \neq 0$. Since $p$ is
of degree $2$ and $p(n) > 0$ for all $n\in
\zbb_+$, the unilateral weighted shift $W$ with
weight sequence $\{w_n\}_{n = 0}^{\infty}$
defined by \eqref{star} is a $3$-isometry (see
\cite[Theorem~2.1]{AL} and
\cite[Proposition~A.2]{JJS}). Note that
   \begin{align} \label{expan-p}
\mbox{$p$ satisfies \eqref{e-concave1}
$\Longleftrightarrow$ $u \geqslant
-\frac{1}{2}.$}
   \end{align}
Straightforward computations show that
   \begin{align*}
\text{$p$ satisfies \eqref{e-concave} $\iff$
$\forall$ $n\Ge 1 \colon 2((n+u)^2-v^2) + 4(n+u)
+ 1 \Ge 0.$}
   \end{align*}
Combined with \eqref{expan-p} and
Proposition~\ref{w-con-ch}, this implies that
$W$ is weakly concave if and only if
   \begin{align} \label{gizo-nie}
\text{$u \geqslant -\frac{1}{2}$ and
$2((1+u)^2-v^2) + 4(1+u) + 1 \Ge 0.$}
   \end{align}
Since the second inequality in \eqref{gizo-nie}
is equivalent to
   \begin{align*}
2v^2 \Le 2(1+u)^2 + 4(1+u)+1,
   \end{align*}
we deduce that if $u \Ge 0$ and $v^2 \leqslant
7/2$, then $W$ is weakly concave.

Assume now that $u \geqslant -\frac{1}{2}.$
Although, in view of \eqref{gizo-nie}, $W$ need
not be weakly concave, quite surprisingly, $W^k$
turns out to be weakly concave for sufficiently
large $k$. To see this, we can argue as follows.
By \eqref{expan-p}, $W$ is expansive. Since $W$
is a 3-isometry, it follows from
\cite[Theorem~2.3]{J} that for every integer $k
\geqslant 1,$ $W^k$ is an expansive
$3$-isometry, and so by
\cite[Lemma~1.21]{Ag-St}, $T = W^k$ satisfies
\eqref{wc1} and \eqref{wc2}. Observe that the
second inequality in the condition (ii) of
Proposition~\ref{W-w-c} holds if and only if
   \begin{align} \label{moj-wnu}
p(n)p(n+2k) \Le p(n+k)^2, \quad n \Ge k.
   \end{align}
Straightforward computations show that
   \begin{align*}
\text{$p$ satisfies \eqref{moj-wnu} $\iff$
$\forall$ $n\Ge k \colon 2((n+u)^2-v^2) +
4(n+u)k + k^2 \Ge 0.$}
   \end{align*}
Now if $k$ is any positive integer such that
$k^2 \Ge 2v^2,$ then the above inequality holds,
and so $p$ satisfies \eqref{moj-wnu}.
Summarizing, if $u\Ge -\frac{1}{2}$ and $k \Ge
\sqrt{2}|v|$, then by Proposition~\ref{W-w-c},
$W^k$ is a weakly concave $3$-isometry. Note
that if $k$ is any positive integer, then $W^k$
is not a $2$-isometry. This follows from
\cite[Theorem~3.3]{JJS} by observing that the
identity
   \begin{align*}
\|(W^k)^n e_0\|^2 \overset{\eqref{star}} =
\frac{p(kn)}{p(0)} = \frac{(kn+u)^2 + v^2}{u^2 +
v^2}, \quad n\in \zbb_+,
   \end{align*}
implies that the expression $\|(W^k)^n e_0\|^2$
is a polynomial in $n$ of degree $2$ (see
footnote~\ref{fyto-1}). Hence, by
Proposition~\ref{gen-SA00}, $W^k$ is not concave
for any positive integer $k$.
   \hfill $\diamondsuit$
   \end{example}
   \section{\label{Sec.4}A spectral dichotomy}
Recall the well-known fact that the spectra of
an invertible operator $T \in \mathcal B(\hh)$
and its inverse $T^{-1} \in \mathcal B(\hh)$ are
related to each other and the correspondence is
given by
   \begin{align} \label{inv-fra}
\sigma(T^{-1})=\phi(\sigma(T)),
   \end{align}
where $\phi(\lambda)=1/\lambda$ is an automorphism of the
punctured plane $\mathbb C_{\bullet}:=\mathbb{C} \backslash
\{0\}$ (see \cite[Theorem~2.3.2]{Si} for the general version
of the spectral mapping theorem). A question worth
considering in the context of the present paper is to find
the counterpart of this simple fact for left-invertible
operators $T$. More specifically, if $ L$ is a left-inverse
of $ T$, then what is the relationship between the spectra of
$T$ and $L$. The purpose of this section is largely to answer
this question. It is worth noting that among the left
inverses of a left-invertible operator $T$ there exist such
as $T^{\prime *}$, where $T^{\prime}$ is the Cauchy dual of
$T$ (see \cite[Proposition~5.1]{Rich87} for concrete
examples).

Before stating the main result of this section,
observe that if $T\in \mathcal{B}(\mathcal{H})$
is left-invertible and $L\in
\mathcal{B}(\mathcal{H})$ is a left-inverse of
$T$, then
   \begin{align} \label{kukrtd}
\text{$0\in \sigma(T)$ if and only if $0\in
\sigma(L)$.}
   \end{align}
   \begin{theorem}[Spectral Dichotomy] \label{main}
Let $T \in \mathcal B(\hh)$ be a left-invertible
operator with a left-inverse $L \in \mathcal
B(\hh)$. Consider the automorphism $\phi$ of the
punctured plane $\mathbb{C}_{\bullet}$ given by
$\phi(z) = 1/{z}$ for $z \in \mathbb
C_{\bullet},$ and extend it to the Riemann
sphere $\mathbb C \cup \{\infty\}$ by setting
$\phi(0)=\infty$ and $\phi(\infty)=0.$ Then
   \begin{align} \label{tocobra}
\text{either $\mathbb C \setminus
\phi(\sigma_{l}(L))\subseteq
\sigma_{r}(T)\setminus \sigma_{l}(T)$ or
$\phi(\sigma(L)) =\sigma(T)$}
   \end{align}
   according as $0 \in \sigma(T)$ or $0 \notin
\sigma(T)$ $($see Fig.\ {\em \ref{fig6}}$)$. In
any case, $0 \notin \partial \sigma(L)$ and
   \begin{align} \label{bdd-inclusion}
\phi(\partial \sigma(L)) \subseteq \sigma_r(T).
   \end{align}
   \end{theorem}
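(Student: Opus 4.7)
The plan is to build everything on a single algebraic identity. Since $LT = I$, for every $\lambda \in \mathbb{C}_\bullet$ one has
\[
T - \lambda I \;=\; (I - \lambda L)\, T \;=\; -\lambda \,\bigl(L - \phi(\lambda) I\bigr)\, T,
\]
and this factorization, combined with the fact that $T$ is bounded below, will do almost all of the work.

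I will first dispatch the easier dichotomy branch, $0 \notin \sigma(T)$. Here $T$ is invertible, so $LT = I$ immediately gives $L = T^{-1}$; then \eqref{inv-fra} yields $\sigma(L) = \phi(\sigma(T))$, and since $0 \notin \sigma(T) \cup \sigma(L)$ by \eqref{kukrtd} and $\phi \circ \phi = \mathrm{id}$ on $\mathbb{C}_\bullet$, applying $\phi$ once more gives $\phi(\sigma(L)) = \sigma(T)$.

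For the branch $0 \in \sigma(T)$, fix $\lambda \in \mathbb{C} \setminus \phi(\sigma_l(L))$. The case $\lambda = 0$ is immediate: $0 \notin \sigma_l(T)$ by left-invertibility of $T$, and $0 \in \sigma(T)$ together with left-invertibility of $T$ forces $0 \in \sigma_r(T)$. When $\lambda \neq 0$, the hypothesis reads $\phi(\lambda) \notin \sigma_l(L)$, i.e., $L - \phi(\lambda) I$ is bounded below; chaining this with $T$ via the identity shows that $T - \lambda I$ is bounded below, so $\lambda \notin \sigma_l(T)$. The remaining task is $\lambda \in \sigma_r(T)$, which I argue by contradiction: if $T - \lambda I$ were surjective, so would be the composition $\bigl(L - \phi(\lambda) I\bigr) T$, forcing $L - \phi(\lambda) I$ to be surjective and hence, already being bounded below, invertible; the identity then allows one to write $T = (I - \lambda L)^{-1}(T - \lambda I)$, so $T$ becomes surjective and therefore invertible, contradicting $0 \in \sigma(T)$.

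For the closing statements I combine the standard inclusion $\partial \sigma(A) \subseteq \sigma_{ap}(A)$ applied both to $L$ and to $L^*$ (using $\sigma(L^*) = \sigma(L)^*$ together with $\sigma_{ap}(L^*) = \sigma_r(L)^*$ from the excerpt) to conclude $\partial \sigma(L) \subseteq \sigma_l(L) \cap \sigma_r(L)$. Since $LT = I$ exhibits $L$ as right-invertible, $0 \notin \sigma_r(L)$, hence $0 \notin \partial \sigma(L)$. For any $\mu \in \partial \sigma(L)$, necessarily non-zero, we have $\mu \in \sigma_r(L)$, so $L - \mu I$ fails to be surjective; applying the identity at $\lambda = \phi(\mu)$ then shows $T - \phi(\mu) I$ is not surjective, giving $\phi(\mu) \in \sigma_r(T)$. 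The main obstacle I foresee is the surjectivity half of the $\lambda \neq 0$ case in the first branch: one must resist jumping to conclusions and instead carefully exploit the factorization to squeeze invertibility of $T$ out of the assumptions, thereby producing the required contradiction with $0 \in \sigma(T)$.
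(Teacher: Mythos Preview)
Your proof is correct and is built on the same key factorization idea as the paper, but you execute the two non-trivial parts differently. The paper uses the companion identity $L-\lambda I = -\lambda L(T-\lambda^{-1}I)$, with $L$ factored on the left, and in the $0\in\sigma(T)$ branch it introduces an explicit left-inverse $L_1$ of $L-\lambda I$ and manipulates equations until $L$ itself becomes left-invertible, contradicting $0\in\sigma_l(L)$; you use the dual factorization $T-\lambda I = -\lambda(L-\phi(\lambda)I)T$, with $T$ on the right, and simply push surjectivity forward through the factors to make $T$ itself invertible, contradicting $0\in\sigma(T)$. Your route avoids naming any auxiliary operator and is a little slicker. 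For the final inclusion $\phi(\partial\sigma(L))\subseteq\sigma_r(T)$, the paper argues by approximation, using the already-proved inclusion $\phi(\mathbb C_\bullet\setminus\sigma(L))\subseteq\sigma_r(T)$ together with continuity of $\phi$ and closedness of $\sigma_r(T)$, whereas you argue directly via $\partial\sigma(L)\subseteq\sigma_r(L)$ and the factorization; the advantage of your version is that it works uniformly in both branches of the dichotomy without appealing to the $0\in\sigma(T)$ case, while the paper's closure argument tacitly relies on that branch.
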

   \begin{proof}
We begin by noting that
   \begin{align} \label{spec-id}
L-\lambda I = L (I-\lambda T) =
-\lambda L(T-\lambda^{-1}I), \quad \lambda \in
\mathbb C_{\bullet}.
   \end{align}
Second, there is no loss of generality in
assuming that $0\in \sigma(T)$ (because
otherwise $L=T^{-1}$, and so we can use
\eqref{inv-fra} to get $\phi(\sigma(L))
=\sigma(T)$). Thus, by \eqref{kukrtd}, $L$ is
not invertible. Since $T$ is left-invertible,
$T$ is not right-invertible. In turn, because
$L$ is right-invertible, $L$ is not
left-invertible. This yields
   \begin{align} \label{left-r}
0 \in \sigma_r(T) \setminus \sigma_l(T) \quad
\text{and} \quad 0 \in \sigma_l(L) \setminus
\sigma_r(L).
   \end{align}
We claim that
   \begin{align} \label{we-nudl}
\phi(\mathbb C_{\bullet} \setminus
\sigma_{l}(L)) \subseteq \sigma_{r}(T)\setminus
\sigma_{l}(T).
   \end{align}
For, assume that $\lambda \in \mathbb
C_{\bullet} \setminus \sigma_{l}(L)$. Then there
exists $L_1\in \mathcal B(\hh)$ such that
   \begin{align} \label{inv-ab}
L_1(L-\lambda I)=I.
   \end{align}
Hence, by \eqref{spec-id}, $T-\lambda^{-1}I$ is
left-invertible, meaning that $\lambda^{-1}\in
\mathbb C \setminus \sigma_{l}(T)$. Thus, to get
\eqref{we-nudl}, it suffices to show that
$\lambda^{-1} \in \sigma_{r}(T).$ Suppose, to
the contrary, that $\lambda^{-1} \in \mathbb C
\setminus \sigma_{r}(T)$. As a consequence,
$T-\lambda^{-1}I$ is invertible.
Right-multiplying \eqref{spec-id} by the inverse
of $T-\lambda^{-1}I$ yields
   \begin{align}  \label{sruf}
(L-\lambda I) (T-\lambda^{-1}I)^{-1} = - \lambda
L.
   \end{align}
Now left-multiplying \eqref{sruf} by $L_1$ leads
to
   \begin{align}  \label{srufi}
(T - \lambda^{-1} I)^{-1}
\overset{\eqref{inv-ab}} = L_1(L-\lambda I)(T -
\lambda^{-1}I)^{-1} = - \lambda L_1 L.
   \end{align}
Finally, left-multiplying \eqref{srufi} by $T -
\lambda^{-1} I$ gives
   \begin{align*}
I = \big(-\lambda (T - \lambda^{-1} I) L_1\big)
L.
   \end{align*}
This means that $L$ is left-invertible, showing
that $0 \in \mathbb{C} \backslash
\sigma_{l}(L)$, which contradicts
\eqref{left-r}. This justifies \eqref{we-nudl}.
Combined with \eqref{left-r}, this implies the
first inclusion in \eqref{tocobra}.

To prove \eqref{bdd-inclusion}, we first check
that $0 \notin \partial \sigma(L)$. Note that
   \begin{align} \label{bdd-inc}
0 \in \partial \sigma(L)
\Longleftrightarrow 0 \in \partial \sigma(L^*).
   \end{align}
Next, since $L$ is right-invertible and
   \begin{align*}
\sigma_{ap}(L^*)=\sigma_{l}(L^*)=\sigma_{r}(L)^{*},
   \end{align*}
we see that $0 \notin \sigma_{ap}(L^*)$.
Combined with \eqref{bdd-inc} and $\partial
\sigma(L^*) \subseteq \sigma_{ap}(L^*),$ this
implies that $0 \notin \partial \sigma(L)$.
Since, by \eqref{we-nudl}, $\phi(\mathbb
C_{\bullet} \setminus \sigma(L)) \subseteq
\sigma_{r}(T) \setminus \sigma_{l}(T),$ and the
right spectrum of any bounded linear operator is
closed, the inclusion $\phi(\partial \sigma(L))
\subseteq \sigma_r(T)$ now follows from the
continuity of $\phi.$ This completes the proof.
   \end{proof}
   \begin{figure}
\begin{tikzpicture}[scale=.8, transform shape]
\tikzset{vertex/.style =
{shape=circle,draw,minimum size=1em}}
\tikzset{edge/.style = {->,> = latex'}} \node[]
(z) at (-1,0) {}; \node[] (a) at (-2.7,-.4)
{$\sigma_{l}(L)$}; \node[] (ab) at (-.5,0.1) {$z
\stackrel{\phi}\longmapsto 1/z$}; \node[] (b) at
(3.2,0.7) {$\mathbb C \setminus
\phi(\sigma_{l}(L))$}; \node[] (ba) at (6.00,-1)
{$\sigma_r(T) \setminus \sigma_l(T)$}; \node[]
(bb) at (-3,.1) {$\centerdot$}; \node[] (d) at
(-2, 0) {}; \node[] (e) at (4.3, 0.4) {};
\node[] (f) at (5.2, 0.4) {}; \node[] (bbb) at
(3.2, .38) {$\centerdot$};
\def\Radius{1}
\draw[thick, dotted] (d) arc(0:360:\Radius) -- cycle;
\def\Radius{1.1}
\draw[thick, dotted] (e) arc(0:360:\Radius) -- cycle;
\def\Radius{2}
\draw[thick, dotted] (f) arc(0:360:\Radius) -- cycle;
   \end{tikzpicture}
   \caption{The relationship between the spectra
of a left-invertible operator $T$ and its
left-inverse $L,$ where $\phantom{}^\centerdot$
denotes the origin in the complex plane.}
\label{fig6}
   \end{figure}
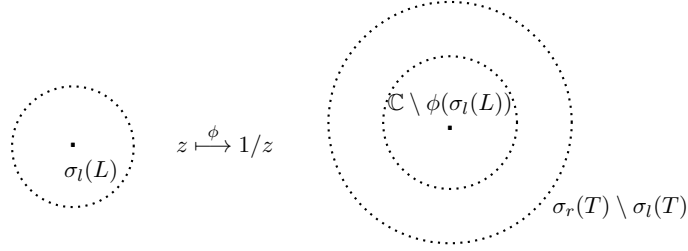
   \begin{corollary}
Let $T \in \mathcal B(\hh)$ be a left-invertible
operator with a left-inverse $L \in \mathcal
B(\hh)$. Then the spectral radius $r(L)$ of $L$
is positive and
   \begin{align} \label{furspit}
\text{either ${\mathbb D}_{1/r(L)} \subseteq
\sigma_r(T) \setminus \sigma_l(T)$ or $\sigma(T)
\subseteq \mathbb C \setminus {\mathbb
D}_{1/r(L)}$}
   \end{align}
according as $0 \in \sigma(T)$ or $0 \notin
\sigma(T)$. In particular, $r(T)r(L) \geqslant
1.$
   \end{corollary}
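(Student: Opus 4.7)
The plan is to read off the corollary from the spectral dichotomy (Theorem~\ref{main}) together with the elementary observation that $\phi$ maps the disc $\overline{\mathbb D_{r(L)}}\setminus\{0\}$ onto the complement $\mathbb C\setminus \mathbb D_{1/r(L)}$ (with $\phi(0)=\infty$). First I would verify $r(L)>0$. Since $\sigma(L)$ is a nonempty compact subset of $\mathbb C$ and, by the theorem, $0\notin\partial\sigma(L)$, we cannot have $\sigma(L)=\{0\}$; hence $\sigma(L)$ contains a nonzero point and $r(L)>0$.

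Next I would handle the dichotomy. \textbf{Case 1:} $0\in\sigma(T)$. By Theorem~\ref{main}, $\mathbb C\setminus\phi(\sigma_l(L))\subseteq\sigma_r(T)\setminus\sigma_l(T)$. Using $\sigma_l(L)\subseteq\sigma(L)\subseteq\overline{\mathbb D_{r(L)}}$ and $0\in\sigma_l(L)$ (so $\phi(0)=\infty\notin\mathbb C$), every point of $\phi(\sigma_l(L))\cap\mathbb C$ has modulus $\geqslant 1/r(L)$, i.e.\ $\phi(\sigma_l(L))\cap\mathbb C\subseteq\mathbb C\setminus\mathbb D_{1/r(L)}$. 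Consequently $\mathbb D_{1/r(L)}\subseteq\mathbb C\setminus\phi(\sigma_l(L))\subseteq\sigma_r(T)\setminus\sigma_l(T)$, which is exactly the first alternative of \eqref{furspit}. \textbf{Case 2:} $0\notin\sigma(T)$. By the second alternative of \eqref{tocobra}, $\sigma(T)=\phi(\sigma(L))$, and by \eqref{kukrtd} we also have $0\notin\sigma(L)$; hence $\sigma(L)\subseteq\overline{\mathbb D_{r(L)}}\setminus\{0\}$, so $\phi(\sigma(L))\subseteq\mathbb C\setminus\mathbb D_{1/r(L)}$, which gives the second alternative of \eqref{furspit}.

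For the final assertion $r(T)r(L)\geqslant 1$, I would simply extract it from \eqref{furspit}. In Case~1 the closed disc $\overline{\mathbb D_{1/r(L)}}\subseteq\sigma(T)$ (since $\sigma(T)$ is closed and contains the open disc $\mathbb D_{1/r(L)}$), so $r(T)\geqslant 1/r(L)$. In Case~2, any $\lambda\in\sigma(T)$ (which is nonempty) satisfies $|\lambda|\geqslant 1/r(L)$, so again $r(T)\geqslant 1/r(L)$.

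There is no real obstacle here; the only subtlety to state carefully is the bookkeeping at $0$ and $\infty$: $0\in\sigma_l(L)$ (so it must be excised when we intersect $\phi(\sigma_l(L))$ with $\mathbb C$), and the fact that $\sigma(T)$, being closed, absorbs the boundary circle $\partial\mathbb D_{1/r(L)}$ in Case~1 so that the spectral-radius inequality is not lost on a limiting argument.
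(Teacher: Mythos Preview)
Your proof is correct and follows essentially the same route as the paper's: both arguments derive $r(L)>0$ from $0\notin\partial\sigma(L)$ (given by Theorem~\ref{main}), observe that $\phi$ carries $\sigma(L)$ (or $\sigma_l(L)$) outside $\mathbb D_{1/r(L)}$, and then read off the dichotomy and the inequality $r(T)r(L)\geqslant 1$ directly from \eqref{tocobra}. Your write-up simply spells out the two cases and the bookkeeping at $0$ and $\infty$ in more detail than the paper's terse version.
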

   \begin{proof}
By Theorem~\ref{main}, $0 \notin \partial
\sigma(L)$ and so $r(L) > 0$. According to the
definition of the spectral radius, $\mathbb
D_{1/r(L)} \cap \phi(\sigma(L)) = \emptyset$,
and thus the desired dichotomy is immediate from
Theorem \ref{main}. The inequality $r(T)r(L)
\geqslant 1$ follows from \eqref{furspit}.
   \end{proof}
   \begin{remark}
Since $L^nT^n=I$ holds for any positive integer
$n,$ one may derive $r(T)r(L) \geqslant 1$ also
from the Gelfand spectral radius formula.
   \hfill $\diamondsuit$
   \end{remark}

Note that if $T\in \mathcal{B}(\mathcal{H})$ is
a left-invertible operator, then by
Theorem~\ref{main} applied to the left-inverse
$L=T^{\prime *}$ of $T$, we get
   \begin{align} \label{dychhalf}
\text{$\mathbb C \setminus
\phi(\sigma_l(T^{\prime *})) \subseteq
\sigma_r(T) \setminus \sigma_l(T)$ provided $0
\in \sigma(T)$.}
   \end{align}
In general, the above inclusion is strict. We
discuss this in more detail in the context of
unilateral weighted shifts.
   \begin{example}  \label{egsimp}
Let $W\in \mathcal{B}(\ell^2)$ be a
left-invertible unilateral weighted shift with
positive weights $\{w_n\}_{n=0}^{\infty}$. Then
$W',$ the Cauchy dual of $W,$ is a
left-invertible unilateral weighted shift with
weights $\{w_n^{-1}\}_{n=0}^{\infty}$. As a
consequence of \cite[Theorem~6]{Rid70} and
\cite[Theorem~4]{Sh}, we get
   \begin{align} \label{kjura0}
\text{$\sigma_r(W)=\sigma(W) = \overline{\mathbb
D}_{r(W)}$ and
$\sigma_r(W^{\prime})=\sigma(W^{\prime}) =
\overline{\mathbb D}_{r(W^{\prime})}$.}
   \end{align}
In turn, \cite[Theorem~1]{Rid70} implies that
$\sigma_l(W)=\overline{\mathbb A(i(W), r(W))}$
(see \eqref{iT}). Since
   \begin{align*}
\sigma_l(W^{\prime *}) = \sigma_r(W^{\prime})^*
\overset{\eqref{kjura0}}= \overline{\mathbb
D}_{r(W^{\prime})},
   \end{align*}
and $0\in \sigma(W)$, we deduce from
\eqref{kjura0} that the condition
\eqref{dychhalf} takes the following form
   \begin{align*}
\mathbb D_{\frac{1}{r(W^{\prime})}} = \mathbb C
\setminus \phi\big(\overline{\mathbb
D}_{r(W^{\prime})}\big) = \mathbb C \setminus
\phi(\sigma_l(W^{\prime *})) \subseteq
\sigma_r(W) \setminus \sigma_l(W) = \mathbb
D_{i(W)}.
   \end{align*}
If we assume that equality holds in the above
inclusion, then $\frac{1}{r(W^{\prime})}=i(W),$
which is not true in general (see e.g.,
\cite[Example~2]{CT}, where $r(W^{\prime})= 2$
and $i(W) \Ge 2^{-3/4}$).
   \hfill $\diamondsuit$
   \end{example}
We conclude this section with some applications
of Theorem \ref{main} to the theory of Cauchy
dual operators.
   \begin{corollary} \label{last-p}
Let $T \in \mathcal B(\hh)$ be a left-invertible
operator such that $0 \in \sigma(T)$ and let
$T'$ be the Cauchy dual of $T.$ Then the
following assertions hold{\em :}
   \begin{enumerate}
   \item[(i)]
if $\sigma_r(T') \subseteq \overline{\mathbb
D},$ then ${\mathbb D} \subseteq \sigma_r(T)
\setminus \sigma_l(T),$
   \item[(ii)]
if $T$ is a contraction such that $\sigma_r(T')
\subseteq \overline{\mathbb D}$, then
$\sigma_r(T)=\sigma(T)=\overline{\mathbb D}$.
   \end{enumerate}
   \end{corollary}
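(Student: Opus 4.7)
The plan is to apply Theorem~\ref{main} to $T$ with the canonical left-inverse $L:=T'^{*}$, and then deduce (ii) from (i) by combining it with the contractivity hypothesis and the closedness of the right spectrum. First, I would record that $L$ really is a left-inverse of $T$: by Proposition~\ref{polur} (or directly), $T'^{*}T = (T^{*}T)^{-1}T^{*}T = I$. Next, the identity $\sigma_{l}(T'^{*}) = \sigma_{r}(T')^{*}$ together with the hypothesis $\sigma_{r}(T') \subseteq \overline{\mathbb D}$ (and the invariance of $\overline{\mathbb D}$ under conjugation) yields $\sigma_{l}(L) \subseteq \overline{\mathbb D}$.

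Since $0 \in \sigma(T)$, the Spectral Dichotomy Theorem~\ref{main} selects its first alternative:
\begin{align*}
\mathbb C \setminus \phi(\sigma_{l}(L)) \subseteq \sigma_{r}(T) \setminus \sigma_{l}(T).
\end{align*}
To conclude (i), I need the inclusion $\mathbb D \subseteq \mathbb C \setminus \phi(\sigma_{l}(L))$. For this, note that $\phi$ maps $\overline{\mathbb D}\setminus\{0\}$ onto $\mathbb C \setminus \mathbb D$ and sends $0$ to $\infty$; hence $\phi(\sigma_{l}(L)) \cap \mathbb C \subseteq \mathbb C \setminus \mathbb D$, so taking complements in $\mathbb C$ gives the required $\mathbb D \subseteq \mathbb C \setminus \phi(\sigma_{l}(L))$. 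The only subtlety here is bookkeeping with the point $0$: by \eqref{left-r} we have $0 \in \sigma_{l}(L)$, and the image $\phi(0) = \infty$ is harmless when one takes complements in $\mathbb C$.

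For (ii), the assumption that $T$ is a contraction gives $\sigma(T) \subseteq \overline{\mathbb D}$, and since $\sigma_{r}(T) \subseteq \sigma(T)$ we get $\sigma_{r}(T) \subseteq \overline{\mathbb D}$. On the other hand, part (i) yields $\mathbb D \subseteq \sigma_{r}(T)$, and the right spectrum of a bounded operator is closed, so $\overline{\mathbb D} \subseteq \sigma_{r}(T)$. Combining these two inclusions gives $\sigma_{r}(T) = \overline{\mathbb D}$, and the sandwich $\overline{\mathbb D} = \sigma_{r}(T) \subseteq \sigma(T) \subseteq \overline{\mathbb D}$ finishes (ii). No step is really an obstacle: the proof is essentially a direct invocation of Theorem~\ref{main}, with the small care required being the passage between the Riemann-sphere picture of $\phi$ and the complex-plane statements of the conclusion.
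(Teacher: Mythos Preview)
Your proof is correct and follows essentially the same approach as the paper's: both set $L=T'^{*}$, use $\sigma_l(T'^{*})=\sigma_r(T')^{*}\subseteq\overline{\mathbb D}$, invoke the first alternative of Theorem~\ref{main} to get $\mathbb D\subseteq\mathbb C\setminus\phi(\sigma_l(L))\subseteq\sigma_r(T)\setminus\sigma_l(T)$, and then for (ii) sandwich using contractivity and closedness of the spectra. The only cosmetic difference is that you spell out the Riemann-sphere bookkeeping for $\phi(0)=\infty$ explicitly, whereas the paper absorbs this into the single line $\mathbb D=\mathbb C\setminus\phi(\overline{\mathbb D})$.
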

   \begin{proof}
(i) Suppose that $\sigma_r(T') \subseteq \overline{\mathbb
D}$. Set $L=T^{\prime *}$. Then $L$ is a left-inverse of $T$
and
   \begin{align*}
\sigma_l(L) = \sigma_l(T^{\prime *}) =
\sigma_r(T^{\prime})^* \subseteq
\overline{\mathbb D}.
   \end{align*}
This combined with Theorem~\ref{main} yields
   \begin{align*}
\mathbb D=\mathbb C \setminus
\phi(\overline{\mathbb D}) \subseteq \mathbb C
\setminus \phi(\sigma_l(L)) \subseteq
\sigma_r(T) \setminus \sigma_l(T),
   \end{align*}
which justifies (i).

(ii) Assume now that $\|T\| \Le 1$ and
$\sigma_r(T') \subseteq \overline{\mathbb D}$.
Then using (i) and the well-known fact that the
spectral radius of a bounded linear operator is
at most its operator norm, we get
   \begin{align*}
{\mathbb D} \subseteq \sigma_r(T) \setminus
\sigma_l(T) \subseteq \sigma_r(T) \subseteq
\sigma(T) \subseteq \overline{{\mathbb D}}.
   \end{align*}
Since $\sigma_r(T)$ and $\sigma(T)$ are closed
subsets of $\mathbb{C}$, the proof is complete.
   \end{proof}
   \begin{remark} \label{dual-rmk}
It is well known and easy to prove that the Cauchy dual $T'$
of $T$ is left-invertible and $(T')'=T.$ This together with
\eqref{kukrtd} implies that $0 \in \sigma(T)$ if and only if
$0 \in \sigma(T').$ In particular, the roles of $T$ and $T'$
in Corollary~\ref{last-p} can be interchanged.
   \hfill $\diamondsuit$
   \end{remark}
 A solution of the wandering subspace problem for concave
operators given by Richter in \cite{R} can also be obtained
from the computations of the spectra of the Cauchy dual
operators (see \cite[Lemma~2.14]{C07}). In turn, these
computations can be derived from the Bunce result (see
\cite[Chapter II, $\S$12]{Co1}) on the existence of a rich
supply of non-zero $*$-homomorphisms of the $C^*$-algebra
generated by a hyponormal operator (see the proof of
\cite[Lemma~2.14]{C07}). Interestingly, the following fact,
which plays an important role in the proofs of
Theorems~\ref{wsp} and \ref{bst}, and which generalizes
\cite[Lemma 2.14(ii)]{C07}, can be deduced from the spectral
dichotomy.
   \begin{corollary} \label{sp-dual}
Let $T \in {\mathcal B}(\hh)$ be a
non-invertible expansion such that
$\sigma_{ap}(T) \subseteq \partial \mathbb D$
and let $T'$ be the Cauchy dual of $T.$ Then
   \begin{enumerate}
   \item[(i)] $\sigma(T) = \overline{\mathbb D} = \sigma(T')$,
   \item[(ii)] $\sigma_{ap}(T) = \partial {\mathbb D} =
\sigma_{ap}(T')$.
   \end{enumerate}
   \end{corollary}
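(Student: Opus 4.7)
The plan is to establish (i) first, using Remark~\ref{rmk-Jan} and Corollary~\ref{last-p}(ii), and then derive (ii) with one extra application of Corollary~\ref{last-p}(i) together with the boundary--approximate-point-spectrum inclusion $\partial \sigma(\cdot) \subseteq \sigma_{ap}(\cdot)$.

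First, I would argue that $\sigma(T) = \overline{\mathbb D}$. The expansion condition $\|x\| \leqslant \|Tx\|$ forces $T^*T \geqslant I$, so $T$ is left-invertible (equivalently, $0 \notin \sigma_{ap}(T)$). Since $T$ is assumed non-invertible, this places $0$ in $\sigma_r(T) \setminus \sigma_l(T)$; in particular $0 \in \sigma(T)$. Now Remark~\ref{rmk-Jan} applies (the hypotheses \eqref{wc1} and \eqref{wc2} hold) and yields $\sigma(T) \subseteq \overline{\mathbb D}$. Since $\partial \sigma(T) \subseteq \sigma_{ap}(T) \subseteq \partial \mathbb D$, the compact set $\sigma(T)$ is contained in $\overline{\mathbb D}$, has boundary inside $\partial \mathbb D$, and meets $\mathbb D$ (at $0$); the only such set is $\overline{\mathbb D}$.

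Next, I would establish that $\sigma(T') = \overline{\mathbb D}$ by feeding what has already been proved into Corollary~\ref{last-p}(ii) with the roles of $T$ and $T'$ exchanged. The key observation is that $T'$ is a contraction: by Proposition~\ref{polur}(i), $|T'|^2 = (T^*T)^{-1} \leqslant I$. By Remark~\ref{dual-rmk}, $(T')' = T$, $T'$ is left-invertible, and $0 \in \sigma(T')$. Moreover, from step one, $\sigma_r((T')') = \sigma_r(T) \subseteq \sigma(T) = \overline{\mathbb D}$. Hence Corollary~\ref{last-p}(ii) applied to $T'$ delivers $\sigma(T') = \overline{\mathbb D}$. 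This proves (i).

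For (ii), the equality $\sigma_{ap}(T) = \partial \mathbb D$ is immediate from the sandwich $\partial \mathbb D = \partial \sigma(T) \subseteq \sigma_{ap}(T) \subseteq \partial \mathbb D$. To handle $\sigma_{ap}(T')$, I would apply Corollary~\ref{last-p}(i) to $T'$ (again using $(T')'=T$, $0 \in \sigma(T')$, and $\sigma_r(T) \subseteq \overline{\mathbb D}$), which yields $\mathbb D \subseteq \sigma_r(T') \setminus \sigma_l(T')$. In particular $\sigma_{ap}(T') = \sigma_l(T')$ is disjoint from $\mathbb D$, and combined with $\sigma_{ap}(T') \subseteq \sigma(T') = \overline{\mathbb D}$ and $\partial \mathbb D = \partial \sigma(T') \subseteq \sigma_{ap}(T')$, we conclude $\sigma_{ap}(T') = \partial \mathbb D$.

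I don't anticipate a genuine obstacle: every ingredient is already on the table. The one point that deserves care is the role swap between $T$ and $T'$ when invoking Corollary~\ref{last-p}, where one must verify that $T'$ itself fulfils the hypotheses of that corollary (left-invertibility, $0 \in \sigma(\cdot)$, contractivity in case (ii), and $\sigma_r$ of the Cauchy dual contained in $\overline{\mathbb D}$); these verifications reduce to Proposition~\ref{polur}, Remark~\ref{dual-rmk}, and the already-proved identity $\sigma(T) = \overline{\mathbb D}$.
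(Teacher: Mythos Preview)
Your proposal is correct and follows essentially the same route as the paper's proof: both establish $\sigma(T)=\overline{\mathbb D}$ via the boundary inclusion $\partial\sigma(T)\subseteq\sigma_{ap}(T)\subseteq\partial\mathbb D$ together with $0\in\sigma(T)$, then invoke Corollary~\ref{last-p}(ii) with the roles of $T$ and $T'$ interchanged (using Remark~\ref{dual-rmk}) to obtain $\sigma(T')=\overline{\mathbb D}$, and finally apply Corollary~\ref{last-p}(i) to $T'$ to pin down $\sigma_{ap}(T')=\partial\mathbb D$. The only cosmetic difference is organizational: you finish (i) before turning to (ii), whereas the paper derives $\sigma_{ap}(T)=\partial\mathbb D$ simultaneously with $\sigma(T)=\overline{\mathbb D}$.
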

   \begin{proof}
Recall the fact that for Hilbert space
operators, left spectrum and approximate point
spectrum coincide. Note that $T'$ is a
contraction because $T$ is an expansion.
Further, since $T$ is not invertible, $T'$ is
not invertible. Also, because the boundary of
the spectrum is contained in the approximate
point spectrum and $0 \in \sigma(T),$ we infer
from $\sigma_{ap}(T) \subseteq
\partial \mathbb D$ that $\sigma(T) =
\overline{\mathbb D}$ and
$\sigma_{ap}(T)=\partial \mathbb D.$ By applying
Corollary~\ref{last-p}(ii) to $T'$, we obtain
$\sigma(T')=\overline{\mathbb D}$ and $\partial
\mathbb D \subseteq \sigma_{ap}(T')$ (see Remark
\ref{dual-rmk}). An application of Corollary
\ref{last-p}(i) to $T'$ yields
   \begin{align*}
\mathbb D \subseteq \sigma(T') \setminus
\sigma_l(T') = \overline{\mathbb D} \setminus
\sigma_{ap}(T') \subseteq \mathbb D,
   \end{align*}
and hence $\sigma_{ap}(T')=\partial \mathbb D$.
This completes the proof.
   \end{proof}
   \section{\label{Sec.5}Proofs of Theorems~\ref{wsp} and \ref{bst}}
The proof of Theorem~\ref{wsp} is based on a
technique developed in \cite{S01}.
   \begin{proof}[Proof of Theorem~{\em \ref{wsp}}]
In view of Corollary~\ref{cly-uni}(i), we may
assume that $0 \in \sigma(T).$ Since the
subspace $\hh'_u$ (see \eqref{Hu}) is invariant
for $T'$, we can consider the operator
$R:=T'|_{\hh'_u} \in \mathcal B (\hh'_u).$ Note
that $T'$ is bounded from below and $R$ maps
$\hh'_u$ onto $\hh'_u$, hence $R$ is invertible.
By Corollary~\ref{sp-dual}(ii), we have
   \begin{align} \label{trzebu}
\sigma_{ap}(R) \subseteq \sigma_{ap}(T') =
\partial \mathbb D.
   \end{align}
Since $0 \notin \sigma(R)$ and $\partial
\sigma(R) \subseteq \sigma_{ap}(R)$, we deduce
from \eqref{trzebu} that $\sigma(R) \subseteq
\partial \mathbb D.$ Further, by Proposition~\ref{bst-lemma},
$R$ being a restriction of the hyponormal operator
$T'|_{\ob{T'}}$ to $\hh'_u$ is hyponormal. Hence, by
\cite[Second Corollary, p.\ 473]{St-0}, $R$ is unitary.
Recall that if $\mathcal M$ is a closed invariant subspace
for a contraction $A \in \mathcal B(\hh)$ such that
$A|_{\mathcal M}$ is unitary, then $\mathcal M$ reduces $A$
(see e.g., \cite[Lemma 2.5]{C1}). Applying this fact to
$A=T'$ ($T'$ is a contraction because $T$ is an expansion)
and $\mathcal M=\hh'_u,$ we deduce that $\hh'_u$ reduces
$T'$. By \cite[Proposition~2.2]{ACJS-2} applied to
$T^{\prime}$, $\hh'_u$ reduces $T$ and $T|_{\hh'_u} =
R^{\prime}$. Consequently, $T|_{\hh'_u}$ is unitary and so
$T(\hh'_u)=\hh'_u.$ This in turn implies that $\hh'_u
\subseteq \hh_u$ (see \eqref{Hu}). On the other hand, by
\cite[Proposition~2.7(i)]{S01} we have
   \begin{align*}
(\hh'_u)^{\perp}=\bigvee_{n \in \zbb_+} T^n(\ker
T^*).
   \end{align*}
Since $T|_{\hh'_u}$ is unitary, we conclude that
$T|_{\hh \ominus \hh'_u}$ has the wandering
subspace property. This completes the proof.
   \end{proof}
   \begin{corollary}[{\cite[Theorem~3.6(A)]{S01}}] \label{thm-gen}
If $T \in \mathcal B(\hh)$ is a concave
operator, then $T$ has the Wold-type
decomposition, that is, $\mathcal{H}_{u}$
reduces $T$, $T|_{\mathcal{H}_{u}}$ is unitary
and
   \begin{align*}
\mathcal{H} = \mathcal{H}_{u} \oplus \bigvee_{n
\in \zbb_+} T^n(\ker T^*),
   \end{align*}
where $\hh_u$ is as in {\em \eqref{Hu}}.
   \end{corollary}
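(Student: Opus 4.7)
The plan is to apply Proposition~\ref{bst-lemma} (every concave operator is weakly concave), invoke Theorem~\ref{wsp}, and then show that for concave $T$ the two ``unitary subspaces'' $\hh'_u$ and $\hh_u$ coincide. Theorem~\ref{wsp} already supplies the inclusion $\hh'_u \subseteq \hh_u$ and the wandering subspace property of the non-unitary summand $S := T|_{(\hh'_u)^\perp}$; since $T|_{\hh'_u}$ is unitary, $\ker T^* \subseteq (\hh'_u)^\perp$, so $(\hh'_u)^\perp = \bigvee_{n \in \zbb_+} T^n(\ker T^*)$. Once the reverse inclusion $\hh_u \subseteq \hh'_u$ is proved, the statement follows.

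The first substantive step will be to show that $\|Tw\| = \|w\|$ for every $w \in \hh_u$. Since $T$ is bounded below (concave $\Rightarrow$ expansive) and $T(\hh_u) = \hh_u$, for each $y \in \hh_u$ and each $n$ there is a unique $x_n \in \hh_u$ with $y = T^n x_n$; moreover $x_n = T x_{n+1}$, and the sequence $\|x_n\|$ is monotone non-increasing by expansivity. Applying Richter's polynomial bound $\|T^n x\|^2 \leq \|x\|^2 + n(\|Tx\|^2 - \|x\|^2)$ from \cite[Lemma~1(b)]{R} to $x_n$ yields $\|y\|^2 - \|x_n\|^2 \leq n(\|x_{n-1}\|^2 - \|x_n\|^2)$; comparing with the divergence of the harmonic series forces $\|x_n\|^2 = \|y\|^2$ for all $n$. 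Taking $w = x_1$ and letting $y$ vary over $\hh_u$ gives $\|Tw\| = \|w\|$ for every $w \in \hh_u$.

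The main technical obstacle is to upgrade this to the inclusion $T^*(\hh_u) \subseteq \hh_u$. For this, I would use the identity
\[
T^{*2}T^2 - 2T^*T + I = T^*(T^*T-I)T - (T^*T - I),
\]
so that concavity ($B_2(T) \leq 0$) becomes $T^*BT \leq B$ for the positive operator $B := T^*T - I$. For $w \in \hh_u$, the equality $\|Tw\| = \|w\|$ gives $\langle Bw, w\rangle = 0$, hence $Bw = 0$ since $B \geq 0$, i.e., $T^*Tw = w$. Writing an arbitrary $h \in \hh_u$ as $h = Tw$ with $w \in \hh_u$ then gives $T^*h = T^*Tw = w \in \hh_u$. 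Thus $\hh_u$ reduces $T$ and $T|_{\hh_u}$ is a surjective isometry, hence unitary.

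To finish, set $\mathcal M := \hh_u \cap (\hh'_u)^\perp$. Both $\hh_u$ and $(\hh'_u)^\perp$ reduce $T$, so $\mathcal M$ reduces $T$ with $T|_{\mathcal M}$ unitary. Viewing $\mathcal M$ as a reducing subspace of $S$, which has the wandering subspace property and is therefore completely non-unitary by \eqref{wsp-cnu}, we conclude $\mathcal M = \{0\}$. Combined with $\hh'_u \subseteq \hh_u$, this yields $\hh_u = \hh'_u$, and the Wold-type decomposition follows from the corresponding decomposition in Theorem~\ref{wsp}.
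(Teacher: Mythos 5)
Your proposal is correct, and its overall skeleton coincides with the paper's: reduce to Theorem~\ref{wsp} via Proposition~\ref{bst-lemma}, observe that $\hh_u$ reduces $T$ to a unitary operator, and then use \eqref{wsp-cnu} (complete non-unitarity of the wandering-subspace summand $S$) together with the inclusion $\hh'_u \subseteq \hh_u$ to force $\hh_u = \hh'_u$. The only real difference is that the paper disposes of the middle step in one line by citing Shimorin's result that $\hh_u$ reduces a concave operator to a unitary (\cite[Proposition~3.4]{S01}), whereas you reprove that fact from scratch: first $T(\hh_u)=\hh_u$ and the harmonic-series argument with Richter's bound \eqref{pulim} to get $\|Tw\|=\|w\|$ on $\hh_u$, then $\langle (T^*T-I)w,w\rangle = 0$ with $T^*T-I\Ge 0$ to get $T^*Tw=w$ and hence $T^*(\hh_u)\subseteq \hh_u$. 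That self-contained derivation is sound (note that the displayed identity expressing $B_2(T)$ in terms of $B=T^*T-I$ is actually not needed for your argument --- only $B\Ge 0$ and $\langle Bw,w\rangle=0$ enter); what it buys is independence from \cite[Proposition~3.4]{S01}, at the cost of essentially reproducing its proof. The final step, $\hh_u\cap(\hh'_u)^\perp=\{0\}$ combined with $\hh'_u\subseteq\hh_u$ giving $\hh_u=\hh'_u$, is exactly the paper's closing argument.
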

   \begin{proof}
It follows from \cite[Proposition 3.4]{S01} that
$\hh_u$ reduces $T$ to a unitary operator, so by
\eqref{wsp-cnu} and Theorem \ref{wsp} we
conclude that $\hh_u = \hh'_u$, which completes
the proof.
   \end{proof}
   \begin{corollary}
Any weakly concave operator $T \in \mathcal B(\hh)$ which is
analytic has the Wold-type decomposition.
   \end{corollary}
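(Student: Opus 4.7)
The plan is to deduce the corollary directly from Theorem~\ref{wsp} by showing that the unitary summand disappears in the analytic case. Apply Theorem~\ref{wsp} to the weakly concave operator $T$ to obtain the decomposition $T = U \oplus S$ with respect to $\hh = \hh'_u \oplus (\hh'_u)^{\perp}$, where $U \in \mathcal B(\hh'_u)$ is unitary and $S \in \mathcal B((\hh'_u)^{\perp})$ has the wandering subspace property, and where $\hh'_u \subseteq \hh_u$.

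Next, use the hypothesis that $T$ is analytic. By definition, this means $\hh_u = \bigcap_{n=0}^{\infty} \ob{T^n} = \{0\}$, so the inclusion $\hh'_u \subseteq \hh_u$ from Theorem~\ref{wsp} forces $\hh'_u = \{0\}$. Consequently, $(\hh'_u)^{\perp} = \hh$ and $T = S$ already has the wandering subspace property on all of $\hh$, i.e., $\hh = \bigvee_{n \in \zbb_+} T^n(\ker T^*)$.

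This is precisely the Wold-type decomposition in the form of Corollary~\ref{thm-gen}, with the trivial unitary summand $\hh_u = \{0\}$; no further computation is needed. There is no real obstacle here, since Theorem~\ref{wsp} has already done the work: the only point is to observe that analyticity collapses $\hh'_u$ to zero. As a consistency check, Corollary~\ref{cly-uni}(ii) guarantees that an analytic weakly concave operator is completely non-normal, which is compatible with the absence of a unitary direct summand.
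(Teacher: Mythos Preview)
Your proposal is correct and matches the paper's intended argument: the corollary is stated without proof precisely because it follows immediately from Theorem~\ref{wsp} by noting that analyticity forces $\hh_u=\{0\}$, hence $\hh'_u=\{0\}$, so $T$ coincides with the summand $S$ having the wandering subspace property. There is nothing to add.
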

The following lemma is needed in the proof of
Theorem~\ref{bst}.
   \begin{lemma} \label{Jan}
Suppose that $T \in \mathcal B(\hh)$ is a
finitely cyclic operator. Then
   \begin{align} \label{rare}
\sigma_{e}(T) & \subseteq \sigma_{ap}(T),
   \\ \label{rara}
\mathrm{ind}_{T}(\lambda) & = - \dim \ker
(T^*-\bar \lambda I), \quad \lambda\in \mathbb C
\setminus \sigma_{ap}(T).
   \end{align}
   \end{lemma}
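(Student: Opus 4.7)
The plan is to observe that both assertions follow essentially by unpacking definitions, once the finite-cyclicity input \eqref{fundim} is in hand. The key point is that finite cyclicity forces $\dim\ker(T^{*}-\bar\lambda I) < \infty$ for every $\lambda\in\mathbb{C}$, so the only remaining ingredients needed for Fredholmness of $T-\lambda I$ are closed range and finite-dimensional $\ker(T-\lambda I)$, both of which are automatic when $T-\lambda I$ is bounded below.

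To prove \eqref{rare}, I would argue by contrapositive. Fix $\lambda \in \mathbb{C}\setminus \sigma_{ap}(T)$, so $T-\lambda I$ is bounded below. This immediately gives $\ker(T-\lambda I) = \{0\}$ (in particular finite dimensional) and $\mathcal{R}(T-\lambda I)$ closed. Invoking \eqref{fundim}, $\dim\ker(T^{*}-\bar\lambda I)<\infty$. Hence $T-\lambda I$ is Fredholm, i.e.\ $\lambda\notin\sigma_e(T)$, which is exactly $\sigma_e(T)\subseteq\sigma_{ap}(T)$.

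For \eqref{rara}, the same $\lambda\in\mathbb{C}\setminus\sigma_{ap}(T)$ lies in $\mathbb{C}\setminus\sigma_e(T)$ by what was just proved, so $\mathrm{ind}_T(\lambda)$ is defined. Plugging $\dim\ker(T-\lambda I)=0$ into the defining formula
\[
\mathrm{ind}_T(\lambda) = \dim\ker(T-\lambda I) - \dim\ker(T^{*}-\bar\lambda I)
\]
yields $\mathrm{ind}_T(\lambda) = -\dim\ker(T^{*}-\bar\lambda I)$, as claimed.

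There is no genuine obstacle here; the lemma is really a bookkeeping consequence of \eqref{fundim} together with the standard fact that ``bounded below plus finite codimensional cokernel $\Rightarrow$ Fredholm with index $\le 0$''. The only small subtlety is the order in which to apply things, namely proving \eqref{rare} first so that \eqref{rara} is stated at points where the Fredholm index legitimately exists.
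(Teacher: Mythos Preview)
Your proof is correct and follows essentially the same route as the paper's: take $\lambda\notin\sigma_{ap}(T)$, use that $T-\lambda I$ is bounded below (equivalently, left-invertible) to get closed range and trivial kernel, invoke \eqref{fundim} for the finite cokernel, and then read off both Fredholmness and the index formula. There is no substantive difference between your argument and the paper's.
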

   \begin{proof} Suppose that
$\lambda\in \mathbb C \setminus \sigma_{ap}(T)$.
Then $T-\lambda I$ is left-invertible, so the
space $\ob{T-\lambda I}$ is closed and $\dim
\ker (T-\lambda I)=0$. According to
\eqref{fundim}, $\dim \ker (T^*-\bar \lambda I)<
\infty$. This means that the operator $T-\lambda
I$ is Fredholm, so $\lambda \in \mathbb C
\setminus \sigma_{e}(T).$ This proves
\eqref{rare}. Moreover, we have
   \begin{align*}  \notag
\mathrm{ind}_{T}(\lambda)& = \dim \ker
(T-\lambda I) - \dim \ker (T^*-\bar \lambda I)
   \\
&= - \dim \ker (T^*-\bar \lambda I).
   \end{align*}
   This completes the proof.
   \end{proof}
   \begin{proof}[Proof of Theorem~{\em \ref{bst}}]
By \eqref{fundim}, $\ker T^*$ is finite
dimensional. Combined with
\cite[Proposition~2.7(ii)]{S01} and the
hypothesis that $T$ is analytic, this implies
that $T'$ is finitely cyclic with the cyclic
space $\ker T^*$. Moreover, since the range of
$T'$ is closed, we have
   \begin{align} \label{dlapor}
\bigvee_{n \in \zbb_+}
(T'|_{\ob{T'}})^n\big(T'(\ker T^*)\big) = T'
\Big(\bigvee_{n \in \zbb_+} T'^n(\ker T^*)\Big)
=\ob{T'}.
   \end{align}
It follows that $T'|_{\ob{T'}}$ is finitely
cyclic with the cyclic space $T'(\ker T^*)$. By
Proposition~\ref{bst-lemma}, $T'|_{\ob{T'}}$ is
hyponormal. Hence, by the Berger-Shaw Theorem
(see \cite[Theorem~IV.2.1]{Co1}),
$T'|_{\ob{T'}}$ has a trace class
self-commutator. Since $T'$ is a finite rank
perturbation of $T'|_{\ob{T'}} \oplus 0$, where
$0$ is the zero operator on $\ker{T^{\prime *}}
= \ker{T^*}$, and the set of finite rank
operators is a $*$-ideal, one can see that $T'$
has a trace class self-commutator. Observing
that for any left-invertible operator $A \in
\mathcal B(\hh)$ the following operator
identities hold (cf.\ the proof of
\cite[Proposition~2.21]{C07}):
   \begin{align} \label{dwuwz}
   \begin{gathered}
[A^*,A]A \overset{\eqref{tra-ta}}= -A^*A[A'^*,A']AA^*A,
   \\
[A^*, A] \overset{\eqref{tra-ta}} = ([A^*,A]A)A'^* +
[A^*,A]P,
   \end{gathered}
   \end{align}
where $P$ denotes the orthogonal projection of
$\hh$ onto $\ker A^*,$ and using the fact that
the set of trace class operators is an ideal
(see \cite[Theorem~3.6.6]{Si}), we deduce that
$T$ has a trace class self-commutator.

To prove the ``moreover'' part we proceed as
follows. First, we show that
   \begin{align} \label{cucu}
\text{$\sigma(S) = \bar{\mathbb D}$ \; and \;
$\sigma_{e}(S) = \sigma_{ap}(S) =\partial
\mathbb D$ \; for \; $S \in \{T,T'\}.$}
   \end{align}
Since $T$ is analytic, we see that $0\in
\sigma(T)$. In view of the above discussion, $T$
and $T'$ are finitely cyclic. Let $S$ be any of
the operators $T$ or $T'.$ It follows from
Corollary~\ref{sp-dual} and Lemma~\ref{Jan} that
$\sigma(S)=\overline {\mathbb D}$ and
$\sigma_{e}(S) \subseteq \sigma_{ap}(S)=\partial
{\mathbb D}$. However, if $\lambda \in \partial
{\mathbb D}=\partial \sigma(S)$, then by
\cite[Theorem~XI.6.8]{Co} either $\lambda$ is an
isolated point of $\sigma(S)$ (which is not the
case) or $\lambda \in \sigma_{e}(S)$, meaning
that $\partial {\mathbb D} \subseteq
\sigma_{e}(S)$. Putting all this together, we
conclude that $\sigma_{e}(S) = \sigma_{ap}(S)
=\partial \mathbb D$, which completes the proof
of \eqref{cucu}.

Next, applying \eqref{cucu} and the fact that
$\mathrm{ind}_S(\cdot)$ is constant on connected
components of $\mathbb C \backslash
\sigma_{e}(S)$ (use
\cite[Corollary~XI.3.13]{Co}), we deduce that
   \begin{align} \notag
\mathrm{ind}_{T}(\lambda) & =
\mathrm{ind}_{T}(0)
   \\ \notag
& \hspace{-1ex} \overset{\eqref{rara}} = - \dim
\ker T^*
   \\ \notag
& = - \dim \ker T^{\prime *}
   \\ \label{fafa}
& \hspace{-1ex} \overset{\eqref{rara}} =
\mathrm{ind}_{T'}(0) =
\mathrm{ind}_{T'}(\lambda), \quad \lambda\in
\mathbb D.
   \end{align}
It follows from \cite[Theorem~8.1]{C-P77} that
for any $\lambda \in \mathbb C \backslash
\sigma_{e}(S)$, the Pincus principal function
$\mathcal P_S(\cdot)$ of a completely non-normal
operator $S$ with a trace class self-commutator
is equal to $\mathrm{ind}_{S}(\lambda)$ a.e.\
with respect to the planar Lebesgue measure in a
neighbourhood of $\lambda$. By
Corollary~\ref{cly-uni}(ii) and
\cite[Proposition~2.2]{ACJS-2}, $T$ and $T'$ are
completely non-normal, and thus by \eqref{cucu}
and \eqref{fafa}, we deduce that
   \begin{align} \label{leba}
\mathcal P_T(\lambda)= \mathcal P_{T'}(\lambda)=
- \dim \ker T^* \cdot \chi_{\mathbb D}(\lambda)
\quad \text{for a.e.\ $\lambda\in \mathbb C,$}
   \end{align}
where $\chi_{\mathbb D}$ stands for the
characteristic function of $\mathbb D.$ Since
the Carey-Pincus trace formula for polynomials
in $S$ and $S^*$ is valid for completely
non-normal operators $S$ with a trace class
self-commutator (see \cite[Theorem~5.1]{C-P77};
see also \cite[Theorem, p.~148]{H-H73}), the
proof is complete.
   \end{proof}
   \begin{remark}
Let us mention that the identity $\sigma_{e}(S)
=\partial \mathbb D$ which appears in
\eqref{cucu} can be proved without using
\cite[Theorem~XI.6.8]{Co}. Indeed, as in the
proof of \eqref{cucu}, we see that
$\sigma_{e}(S) \subseteq \sigma_{ap}(S) =
\partial {\mathbb D}$, where $S \in \{T,T'\}$.
Suppose, to the contrary, that $\sigma_{e}(S)$
is a proper subset of $\partial {\mathbb D}$.
Then
   \begin{align*}
\mathbb D\cup (\mathbb{C} \setminus
\overline{\mathbb{D}}) \subseteq \varOmega,
   \end{align*}
where $\varOmega$ is the connected component of
$\mathbb C \backslash \sigma_{e}(S)$. Since
$\mathrm{ind}_S(\cdot)$ is constant on connected
components of $\mathbb C \setminus
\sigma_{e}(S)$ and, by
Corollary~\ref{sp-dual}(i),
$\mathrm{ind}_{S}(\lambda)=0$ whenever $\lambda
\in \mathbb{C} \setminus \overline{\mathbb{D}}$,
we conclude that (recall that $\dim \ker T^* =
\dim \ker T^{\prime *}$)
   \begin{align*}
- \dim \ker T^* \overset{\eqref{rara}}=
\mathrm{ind}_{S}(0) = \mathrm{ind}_{S}(\lambda)
= 0, \quad \lambda \in \mathbb{C} \setminus
\overline{\mathbb{D}}.
   \end{align*}
This implies that $T$ is invertible, which
contradicts $0\in \sigma(T)$.
   \hfill $\diamondsuit$
   \end{remark}
   \section{\label{Sec.6}Generalizations to the classes $\mathcal A_{k}$}
   Motivated by the study of weakly concave
operators, we introduce and discuss the
following related classes of operators.
   \begin{definition} \label{duffa}
Let $T\in \mathcal B(\hh)$. We say that $T$ is
{\em of class} $\mathcal A_{\infty}$ if
   \begin{enumerate}
   \item[(i)] $\sigma_{ap}(T) \subseteq \partial \mathbb D$,
   \item[(ii)] $T$ is an expansion,
   \item[(iii$_{\infty}$)] $T'|_{\hh_u'}$ is hyponormal
with $\hh_u'=\bigcap_{n=0}^{\infty}
\ob{T^{\prime n}}$.
   \end{enumerate}
Given $k \in \zbb_+,$ we say that $T$ is {\em of
class} $\mathcal A_{k}$ if (i), (ii) and
(iii$_{k}$) hold, where
 \begin{enumerate}
   \item[(iii$_{k}$)] $T'|_{\ob{T'^k}}$ is hyponormal.
   \end{enumerate}
   \end{definition}
In view of the second paragraph of the proof of
Proposition~\ref{bst-lemma}, the class $\mathcal A_0$
contains all concave operators. It follows from
Proposition~\ref{bst-lemma} that the class $\mathcal A_1$
consists exactly of weakly concave operators. Since the
restriction of a hyponormal operator to its closed invariant
subspace is again hyponormal, we conclude~that
   \begin{align} \label{stukk}
\{\text{\em concave operators}\} \subseteq
\mathcal A_{k} \subseteq \mathcal A_{k+1}
\subseteq \mathcal A_{\infty}, \quad k\in
\zbb_+.
   \end{align}
It turns out that all inclusions in
\eqref{stukk} are proper if $\hh$ is infinite
dimensional (see Example~\ref{separ-1}). If
$\hh$ is finite dimensional, then for every
$k\in \zbb_+ \cup \{\infty\}$, $\mathcal A_k$
consists exactly of unitary operators (see
Proposition~\ref{cly-uni-prop}(i)).

An examination of the proof of Theorem~\ref{wsp}
(using Proposition~\ref{cly-uni-prop}(i) in
place of Corollary~\ref{cly-uni}(i)) shows that
it is true for operators of class $\mathcal
A_{k}$, where $k\in \zbb_+ \cup \{\infty\}$.
   \begin{theorem} \label{Theorem2.7*}
Suppose that $T \in \mathcal B(\hh)$ is of class
$\mathcal A_{\infty}$. Let $\hh_u$ and $\hh'_u$
denote the closed vector subspaces of $\hh$
given by \eqref{Hu}. Then $\hh'_u \subseteq
\hh_u$ and $\hh'_u$ reduces $T.$ Moreover,
   \begin{align*}
T = U \oplus S \text{ with respect to the
decomposition } \hh = \hh'_u \oplus
(\hh'_u)^{\perp},
   \end{align*}
where $U \in \mathcal B(\hh'_u)$ is unitary and
$S \in \mathcal B((\hh'_u)^{\perp})$ has the
wandering subspace property.
   \end{theorem}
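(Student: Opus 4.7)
The plan is to essentially replay the proof of Theorem~\ref{wsp} verbatim, making only two substitutions: I will replace Corollary~\ref{cly-uni}(i) by Proposition~\ref{cly-uni-prop}(i) when disposing of the invertible case, and I will use hypothesis (iii$_\infty$) of class $\mathcal{A}_\infty$ directly instead of appealing to Proposition~\ref{bst-lemma}(iv) to obtain hyponormality of $T'$ on $\hh'_u$. The reason this works is that the proof of Theorem~\ref{wsp} only ever uses hyponormality of the restriction of $T'$ to the subspace $\hh'_u$, and (iii$_\infty$) supplies exactly this.

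First I would dispose of the invertible case: if $T$ is invertible, Proposition~\ref{cly-uni-prop}(i) (whose hypotheses are precisely (i) and (iii$_\infty$) of Definition~\ref{duffa}) forces $T$ to be unitary, in which case $\hh'_u = \hh$, $U = T$, and $(\hh'_u)^{\perp} = \{0\}$, so the decomposition holds trivially. So I would assume $0 \in \sigma(T)$. Next I would introduce $R := T'|_{\hh'_u} \in \mathcal B(\hh'_u)$; since $\hh'_u$ is $T'$-invariant, since $T'$ is bounded below, and since $R(\hh'_u) = \hh'_u$ by the very definition of $\hh'_u$, the operator $R$ is invertible. The core of the argument is to show $R$ is unitary. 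To this end, I would apply Corollary~\ref{sp-dual}(ii) to get $\sigma_{ap}(T') = \partial \mathbb{D}$; then $\sigma_{ap}(R) \subseteq \partial \mathbb{D}$, and combining with $0 \notin \sigma(R)$ and $\partial \sigma(R) \subseteq \sigma_{ap}(R)$ forces $\sigma(R) \subseteq \partial \mathbb{D}$. Since (iii$_\infty$) makes $R$ hyponormal, Stampfli's theorem \cite[Second Corollary, p.~473]{St-0} yields that $R$ is unitary.

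From this point the remaining steps are straightforward. Because $T$ is an expansion, $T'$ is a contraction, and a closed invariant subspace of a contraction on which it acts unitarily is automatically reducing (see \cite[Lemma~2.5]{C1}); hence $\hh'_u$ reduces $T'$. Applying \cite[Proposition~2.2]{ACJS-2} to $T'$, I conclude that $\hh'_u$ also reduces $T$ and that $T|_{\hh'_u} = R'$ is unitary; in particular $T(\hh'_u) = \hh'_u$, which gives $\hh'_u \subseteq \hh_u$. Finally, \cite[Proposition~2.7(i)]{S01} provides the identity $(\hh'_u)^{\perp} = \bigvee_{n \in \zbb_+} T^n(\ker T^*)$, which is precisely the wandering subspace property for $S := T|_{(\hh'_u)^{\perp}}$. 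I do not anticipate a genuine obstacle; the only conceptual point worth care is that each use of hyponormality in the original proof localizes to $\hh'_u$, which is exactly what (iii$_\infty$) affords.
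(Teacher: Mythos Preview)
Your proposal is correct and follows essentially the same approach as the paper, which simply remarks that ``an examination of the proof of Theorem~\ref{wsp} (using Proposition~\ref{cly-uni-prop}(i) in place of Corollary~\ref{cly-uni}(i)) shows that it is true for operators of class $\mathcal A_{k}$.'' You have correctly identified both substitutions needed: Proposition~\ref{cly-uni-prop}(i) for the invertible case, and the direct use of (iii$_\infty$) in place of Proposition~\ref{bst-lemma} to obtain hyponormality of $R=T'|_{\hh'_u}$.
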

   \begin{remark}
Regarding Theorems~\ref{wsp} and
\ref{Theorem2.7*}, we make the following
observation. Let $T \in {\mathcal B}(\hh)$ be an
analytic expansion with $\sigma_{ap}(T)
\subseteq \partial \mathbb D$ and let $T'$ be
the Cauchy dual of $T.$ Set $R:=T'|_{\hh'_u}$.
Since the analyticity of $T$ implies that $0\in
\sigma(T)$, it can be concluded from the proof
of Theorem~\ref{wsp} that $R$ is an invertible
contraction such that $\sigma_{ap}(R) \subseteq
\partial \mathbb D$ (see \eqref{trzebu}),
which together with the inclusion $\partial
\sigma(R) \subseteq \sigma_{ap}(R)$ implies that
$\sigma(R) \subseteq \partial \mathbb D$ (the
reader is referred to \cite[Theorem 7]{St},
where a non-unitary contraction with spectrum
being a nowhere dense perfect set of measure
zero on the unit circle has been constructed).
Thus $R$ is unitary if and only if $R$ is
hyponormal. In particular, this is the case if
the restriction of $T'$ to the range of some
power of it is hyponormal (see \eqref{stukk} and
Theorem~\ref{Theorem2.7*}).
   \hfill $\diamondsuit$
   \end{remark}
In turn, Theorem~\ref{bst} remains valid for
operators of class $\mathcal A_{k}$, where $k\in
\zbb_+$.
   \begin{theorem} \label{bst-Jan}
Let $T \in \mathcal B(\hh)$ be an analytic
finitely cyclic operator of class $\mathcal
A_{k}$ for some $k\in\zbb_+.$ Then $T$ and $T'$
have trace class self-commutators. Moreover, for
any complex polynomials $p, q$ in two complex
variables $z$ and $\bar z$,
   \begin{align*}
\mathrm{tr}\,[p(T', T'^*), q(T', T'^*)] &=
\mathrm{tr}\,[p(T, T^*), q(T, T^*)]
   \\
&= \frac{1}{\pi}\dim \ker T^* \int_{\D}
\Big(\frac{\partial p}{\partial \bar
z}\frac{\partial q}{\partial z} - \frac{\partial
p}{\partial z}\frac{\partial q}{\partial \bar
z}\Big) (z, \bar z) dA(z),
   \end{align*}
where $dA$ denotes the Lebesgue planar measure.
   \end{theorem}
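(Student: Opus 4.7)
My plan is to adapt the proof of Theorem~\ref{bst} by systematically replacing the subspace $\ob{T'}$ used there with $\ob{T'^k}$, which is the $T'$-invariant subspace supplied with hyponormality by the hypothesis $T\in\mathcal{A}_k$. First, since $T$ is analytic and finitely cyclic, \cite[Proposition~2.7(ii)]{S01} gives that $T'$ is finitely cyclic with cyclic space $\ker T^*$, and $\dim\ker T^*<\infty$ by \eqref{fundim}. As $T'$ is left-invertible, $T'^k$ is bounded below; in particular, applying $T'^k$ to $\bigvee_{n\in\zbb_+}T'^n(\ker T^*)=\hh$ and using continuity yields the analogue of \eqref{dlapor}, namely $\bigvee_{n\in\zbb_+}(T'|_{\ob{T'^k}})^{n}\bigl(T'^k(\ker T^*)\bigr)=\ob{T'^k}$. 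Thus $T'|_{\ob{T'^k}}$ is finitely cyclic with finite-dimensional cyclic space $T'^k(\ker T^*)$, and being hyponormal by $T\in\mathcal{A}_k$, the Berger--Shaw theorem \cite[Theorem~IV.2.1]{Co1} endows it with a trace class self-commutator.

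The crux is then to push this back to $T'$ and $T$. Since $T'$ is left-invertible, each $T'^{j}$ is a linear homeomorphism of $\hh$ onto $\ob{T'^j}$ that carries $\ob{T'}$ onto $\ob{T'^{j+1}}$; it follows inductively that
\[
\dim\ob{T'^{j-1}}/\ob{T'^j}=\dim\ker T'^{*}=\dim\ker T^{*},\qquad j\Ge 1,
\]
so that $\dim\ker T'^{*k}=\dim(\ob{T'^k})^{\perp}=k\dim\ker T^{*}<\infty$. In the decomposition $\hh=\ob{T'^k}\oplus\ker T'^{*k}$, invariance of $\ob{T'^k}$ under $T'$ makes $T'$ upper triangular with diagonal blocks $T'|_{\ob{T'^k}}$ and a compression of $T'$ acting on the finite-dimensional space $\ker T'^{*k}$; thus $T'$ is a finite rank perturbation of $T'|_{\ob{T'^k}}\oplus 0$. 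Because the trace class operators form a two-sided $*$-ideal, $T'$ has a trace class self-commutator. Feeding this into the identities \eqref{dwuwz} with $A=T$ yields the same conclusion for $T$.

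For the trace formula, the rest of the argument from the proof of Theorem~\ref{bst} transfers essentially verbatim. Corollary~\ref{sp-dual} and Lemma~\ref{Jan} (applicable since $T$ and $T'$ are finitely cyclic) combine with local constancy of the Fredholm index on $\mathbb{C}\setminus\sigma_e(S)$ to give $\sigma(S)=\overline{\D}$, $\sigma_e(S)=\sigma_{ap}(S)=\partial\D$ and $\mathrm{ind}_T(\lambda)=-\dim\ker T^*=\mathrm{ind}_{T'}(\lambda)$ for $\lambda\in\D$ and $S\in\{T,T'\}$. Complete non-normality of $T$ follows from Proposition~\ref{cly-uni-prop}(ii), applicable because $\mathcal{A}_k\subseteq\mathcal{A}_{\infty}$ guarantees hyponormality of $T'|_{\hh'_u}$, and then \cite[Proposition~2.2]{ACJS-2} transfers this to $T'$. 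Carey--Pincus \cite[Theorem~8.1]{C-P77} identifies $\mathcal{P}_T=\mathcal{P}_{T'}=-\dim\ker T^*\cdot\chi_{\D}$, and their trace formula \cite[Theorem~5.1]{C-P77} closes out the identity. The only step requiring genuinely new work relative to the $k=1$ case is the dimension count $\dim\ker T'^{*k}=k\dim\ker T^*$ that legitimates the finite rank perturbation argument; everything else is bookkeeping around the substitution $\ob{T'}\rightsquigarrow\ob{T'^k}$.
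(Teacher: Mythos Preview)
Your argument is correct and follows essentially the same route as the paper's proof: reduce to Berger--Shaw on the hyponormal restriction $T'|_{\ob{T'^k}}$, show $\ker T'^{*k}$ is finite dimensional so that $T'$ is a finite rank perturbation of $R_k\oplus 0$, transfer to $T$ via \eqref{dwuwz}, and then repeat the spectral and Carey--Pincus steps from the proof of Theorem~\ref{bst} with Proposition~\ref{cly-uni-prop}(ii) in place of Corollary~\ref{cly-uni}(ii). The only cosmetic difference is that the paper obtains $\dim\ker T'^{*k}<\infty$ by citing \cite[Lemma~2.1]{S01} to identify $\ker T'^{*k}=\bigvee_{j=0}^{k-1}T^j(\ker T^*)$ directly, whereas you reach the same conclusion via the elementary quotient count $\dim\ob{T'^{j-1}}/\ob{T'^j}=\dim\ker T^*$.
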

   \begin{proof}
According to Proposition~\ref{bst-lemma}, the
case $k=1$ is precisely Theorem~\ref{bst} (by
\eqref{stukk}, this implies the case $k=0$).
Assume now that $k \Ge 2.$ Since $T$ is finitely
cyclic, we infer from \eqref{fundim} that $\dim
\ker T^* < \infty$. By the analyticity of $T$,
$\hh_u=\{0\}$, so by \cite[Proposition
2.7(ii)]{S01} $T'$ is finitely cyclic with the
cyclic space $\ker T^*$. Since the range of
$T^{\prime k}$ is closed, we deduce that
$R_k:=T'|_{\ob{T^{\prime k}}}$ is finitely
cyclic with the cyclic space $T^{\prime k}(\ker
T^*)$ (cf.\ \eqref{dlapor}). By (iii$_{k}$),
$R_k$ is hyponormal, so by
\cite[Theorem~IV.2.1]{Co1}, $[R_k^*,R_k]$ is of
trace class. It follows from \cite[Lemma
2.1]{S01} that
   \begin{align*}
\ob{T^{\prime k}}^{\perp}=\ker{T^{\prime *
k}}=\bigvee_{j=0}^{k-1} T^j(\ker T^*),
   \end{align*}
which yields $\dim \ob{T^{\prime k}}^{\perp} <
\infty$. This implies that $T'$ is a finite rank
perturbation of the operator $R_k \oplus 0$
which has a trace class self-commutator. Thus,
the self-commutator of $T'$ and consequently
that of $T$ are of trace class
(see~\eqref{dwuwz}). It is also easy to see that
\eqref{cucu} holds (with the same proof). Next,
arguing as in last paragraph of the proof of
Theorem \ref{bst} (with
Proposition~\ref{cly-uni-prop}(ii) in place of
Corollary~\ref{cly-uni}(ii)), we conclude that
\eqref{leba} holds. Finally, applying the
Carey-Pincus trace formula completes the proof.
   \end{proof}
   \begin{corollary} \label{bdfir}
Suppose that $\hh\neq \{0\}$ and $T \in \mathcal B(\hh)$ is
an analytic finitely cyclic operator satisfying the
conditions {\em (i)} and {\em (ii)} of Definition~{\em
\ref{duffa}}. Then
   \begin{align} \label{toraz}
   \begin{gathered}
\sigma_{le}(T) = \sigma_{re}(T)=\sigma_{e}(T)= \sigma_{ap}(T)
= \partial \mathbb D,
   \\
\sigma_{le}(T')=\sigma_{re}(T')=\sigma_{e}(T')=
\sigma_{ap}(T') = \partial \mathbb D,
   \end{gathered}
   \end{align}
where $\sigma_{le}(\cdot)$ and $\sigma_{re}(\cdot)$ denote
the left essential and the right essential spectrum,
respectively. Moreover, $\dim \ker(T^*)$ is a positive
integer and
   \begin{align} \label{toroz}
\mathrm{ind}_{T}(\lambda) = \mathrm{ind}_{T'}(\lambda) =
   \begin{cases}
- \dim \ker T^* & \text{if } \lambda \in \mathbb
D,
\\[1ex]
0 & \text{if } \lambda \in \mathbb C \setminus
\overline{\mathbb D}.
   \end{cases}
   \end{align}
   \end{corollary}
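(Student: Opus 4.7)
The plan is to treat $T$ and $T'$ in parallel and reduce everything to Corollary~\ref{sp-dual}, Lemma~\ref{Jan}, and the constancy of the Fredholm index on connected components of $\mathbb C\setminus \sigma_e(\cdot)$. To start: since $T$ is analytic and $\hh \neq \{0\}$, $T$ is not invertible, so $0 \in \sigma(T)$; combined with (i) and (ii), Corollary~\ref{sp-dual} yields $\sigma(T) = \overline{\mathbb D} = \sigma(T')$ and $\sigma_{ap}(T) = \partial \mathbb D = \sigma_{ap}(T')$. Moreover, $\dim \ker T^*$ is finite by \eqref{fundim} and positive (otherwise $T$ would be surjective and left-invertible, hence invertible, contradicting analyticity). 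By \cite[Proposition~2.7(ii)]{S01}, analyticity makes $T'$ finitely cyclic with cyclic space $\ker T^{\prime *} = \ker T^*$, the latter identity coming from Proposition~\ref{polur}.

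Fix $S \in \{T, T'\}$. Lemma~\ref{Jan} then provides $\sigma_e(S) \subseteq \sigma_{ap}(S) = \partial \mathbb D$ and $\mathrm{ind}_S(\lambda) = -\dim \ker(S^* - \bar\lambda I)$ on $\mathbb C \setminus \partial \mathbb D$. For the reverse inclusion $\partial \mathbb D \subseteq \sigma_e(S)$, I would reuse the connectedness argument from the Remark following the proof of Theorem~\ref{bst}: a hypothetical $\lambda_0 \in \partial \mathbb D \setminus \sigma_e(S)$ would place $\mathbb D$ and $\mathbb C \setminus \overline{\mathbb D}$ in a common connected component of $\mathbb C \setminus \sigma_e(S)$, on which $\mathrm{ind}_S$ is constant by \cite[Corollary~XI.3.13]{Co}; but $\mathrm{ind}_S(\lambda) = 0$ on the exterior of $\overline{\mathbb D}$ by invertibility, while $\mathrm{ind}_S(0) = -\dim \ker T^* < 0$, a contradiction. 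Hence $\sigma_e(S) = \partial \mathbb D$.

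To upgrade this to $\sigma_{le}(S) = \sigma_{re}(S) = \partial \mathbb D$, I would fix $\lambda \in \partial \mathbb D$ and verify that both $\ker(S - \lambda I)$ and $\ker(S^* - \bar\lambda I)$ are finite-dimensional; then $S - \lambda I$, being non-Fredholm, must fail closed range, which rules out both left and right Fredholmness and places $\lambda$ in $\sigma_{le}(S)\cap\sigma_{re}(S)$. For $S = T$ this is painless: analyticity forces $\ker(T - \lambda I) = \{0\}$ via the iteration $x = \lambda^{-1} T x \in \bigcap_n \ob{T^n} = \hh_u = \{0\}$. The main obstacle is the case $S = T'$, since $T'$ is not assumed analytic and so $\hh'_u$ need not vanish. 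I would overcome it using that $T'$ is a contraction (because $T$ is an expansion): for $|\lambda|=1$ and $T'x = \lambda x$, the equality $\|T'x\| = \|x\|$ forces $T'^*T'x = x$ and hence $T'^* x = \bar\lambda x$. Thus $\ker(T' - \lambda I) \subseteq \ker(T'^* - \bar\lambda I)$, which is finite-dimensional by \eqref{fundim} applied to the finitely cyclic $T'$.

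Finally, the index formula \eqref{toroz} follows at once from the constancy of $\mathrm{ind}_S$ on the two components $\mathbb D$ and $\mathbb C \setminus \overline{\mathbb D}$ of $\mathbb C \setminus \partial \mathbb D$, evaluated via Lemma~\ref{Jan} at $\lambda = 0$ (yielding $-\dim \ker S^* = -\dim \ker T^*$) and at any $\lambda$ outside $\overline{\mathbb D}$ (yielding $0$ by invertibility).
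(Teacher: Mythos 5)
Your proof is correct, and its skeleton coincides with the paper's (which is only sketched there): non-invertibility from analyticity, Corollary~\ref{sp-dual} for the spectra and approximate point spectra, Lemma~\ref{Jan} for $\sigma_e(S)\subseteq\sigma_{ap}(S)$ and the index formula, and the connectedness-of-$\mathbb C\setminus\sigma_e(S)$ argument from the remark after Theorem~\ref{bst} to force $\partial\mathbb D\subseteq\sigma_e(S)$. Where you genuinely diverge is in upgrading $\sigma_e(S)=\partial\mathbb D$ to $\sigma_{le}(S)=\sigma_{re}(S)=\partial\mathbb D$: the paper gets this by citing \cite[Theorem~XI.6.8]{Co} (boundary points of the spectrum that are not isolated lie in $\sigma_{le}\cap\sigma_{re}$), whereas you give a self-contained replacement by showing that for $\lambda\in\partial\mathbb D$ both $\ker(S-\lambda I)$ and $\ker(S^*-\bar\lambda I)$ are finite dimensional, so failure of Fredholmness can only come from non-closed range, which kills left and right semi-Fredholmness simultaneously. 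The two auxiliary facts you need for this are handled correctly: analyticity gives $\ker(T-\lambda I)=\{0\}$ for $\lambda\neq 0$, and for $T'$ the contraction identity $\|T'x\|=\|x\|\Rightarrow T'^*T'x=x$ yields $\ker(T'-\lambda I)\subseteq\ker(T'^*-\bar\lambda I)$, which is finite dimensional by \eqref{fundim} since $T'$ is finitely cyclic. Your route is slightly longer but more elementary and makes explicit exactly which structural features of $T$ and $T'$ (analyticity, contractivity of the Cauchy dual, finite cyclicity) are responsible for the left/right essential spectrum statement; the paper's route is shorter at the cost of importing a nontrivial theorem on boundary points of the spectrum.
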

   \begin{proof}
Look carefully at \cite[Theorem~XI.6.8]{Co} and the proofs of
\eqref{cucu} and \eqref{fafa}. That $\dim \ker(T^*) \Ge 1$
can be inferred from \eqref{toraz}, \eqref{toroz} and the
fact that the Fredholm index $\mathrm{ind}_S(\cdot)$ of any
$S\in \mathcal B(\hh)$ is constant on connected components of
$\mathbb C \backslash \sigma_{e}(S).$
  \end{proof}
The next result generalizes \cite[Corollary~2.29]{C07}, which
was stated for concave operators, to operators of class
$\mathcal A_k$ with $k\in \zbb_+$ (see \eqref{stukk}). This
is also related to \cite{Co67}.
   \begin{corollary} \label{bst-chrup}
Suppose that $\hh\neq \{0\}$ and $T \in \mathcal B(\hh)$ is
an analytic finitely cyclic operator of class $\mathcal
A_{k},$ where $k\in\zbb_+.$ Then $n: =\dim \ker T^*$ is a
positive integer and $T$ is unitarily equivalent to a compact
perturbation of $U^n_+$, where $U_+$ is the unilateral shift
of multiplicity $1.$
   \end{corollary}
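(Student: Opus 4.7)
The plan is to invoke the Brown-Douglas-Fillmore (BDF) classification theorem, which asserts that two essentially normal operators on a separable infinite-dimensional Hilbert space are unitarily equivalent modulo the compacts if and only if they share the same essential spectrum and the same Fredholm index function on its complement. Note that $\hh$ is separable because $T$ is finitely cyclic, and $\hh$ is infinite-dimensional because an analytic expansion on a nonzero finite-dimensional space would be simultaneously injective (hence invertible, by finite dimensionality) and not invertible (since it is analytic), a contradiction.

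I would first assemble the BDF invariants for $T$. By Theorem~\ref{bst-Jan}, the self-commutator of $T$ is of trace class, hence compact, so $T$ is essentially normal. Corollary~\ref{bdfir} then supplies the remaining data: $n:=\dim\ker T^*$ is a positive integer, $\sigma_e(T)=\partial\mathbb D$, $\mathrm{ind}_T(\lambda)=-n$ for $\lambda\in\mathbb D$, and $\mathrm{ind}_T(\lambda)=0$ for $\lambda\in\mathbb C\setminus\overline{\mathbb D}$. For the candidate model $U_+^n$, the self-commutator $I-U_+^n U_+^{*n}=P_{\ker(U_+^{*n})}$ is a projection of rank $n$, so $U_+^n$ is essentially normal. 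The spectral mapping theorem for the essential spectrum gives $\sigma_e(U_+^n)=\partial\mathbb D$. Left-invertibility of $U_+^n$ yields $\ker(U_+^n-\lambda I)=\{0\}$ for $\lambda\in\mathbb D$, while $\dim\ker(U_+^{*n}-\bar\lambda I)=n$ for such $\lambda$ (using the eigenvectors of $U_+^*$ at the $n$-th roots of $\bar\lambda$ in $\mathbb D$), hence $\mathrm{ind}_{U_+^n}(\lambda)=-n$ on $\mathbb D$; and $U_+^n-\lambda I$ is invertible for $|\lambda|>1$, so the index vanishes outside $\overline{\mathbb D}$.

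With the invariants matched, BDF produces a unitary $V$ and a compact operator $K$ with $V T V^{-1}=U_+^n+K$, which establishes the claim. The substantive content has already been packaged into Theorem~\ref{bst-Jan} and Corollary~\ref{bdfir}; the only real obstacle here is the identification of $U_+^n$ as a legitimate model together with the direct verification that its BDF invariants agree with those of $T$, both of which are routine computations.
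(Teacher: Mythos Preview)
Your proof is correct and follows essentially the same route as the paper: assemble the essential spectrum and index data for $T$ via Theorem~\ref{bst-Jan} and Corollary~\ref{bdfir}, match them with those of $U_+^n$, and invoke the BDF-type classification (the paper cites \cite[Theorem~2]{Doug80}). The only cosmetic difference is that the paper obtains the invariants of $U_+^n$ by observing that $U_+^n$ is itself an analytic finitely cyclic concave operator with $\dim\ker(U_+^n)^*=n$ and then reapplying Corollary~\ref{bdfir} to it, whereas you compute those invariants by hand.
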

   \begin{proof}
By Corollary~\ref{bdfir}, $n$ is a positive integer. Next,
note that $U_+^n$ is a concave operator which is analytic and
finitely cyclic with $\dim\ker(U_+^{n})^{*}=n$. Applying
Corollary~\ref{bdfir} to the operators $T$ and $U_+^n$, we
conclude that
   \begin{gather} \label{BDF-equ}
\sigma_{e}(T)=\sigma_{e}(U_+^n)=\partial \mathbb
D,
   \\ \label{BDF-eqv}
\mathrm{ind}_{T}(\lambda) =
\mathrm{ind}_{U_+^n}(\lambda), \quad \lambda\in
\mathbb C \setminus
\partial \mathbb D.
   \end{gather}
By Theorem~\ref{bst-Jan}, the operators $T$ and
$U_+^n$ have compact self-commutators. This
together with \eqref{BDF-equ}, \eqref{BDF-eqv}
and \cite[Theorem~2]{Doug80} completes the
proof.
   \end{proof}
The example below shows that if $\hh$ is
infinite dimensional, then all the inclusions in
\eqref{stukk} are proper, i.e.,
   \begin{align} \label{uno-4}
\{\text{\em concave operators}\} \varsubsetneq
\mathcal A_{k} \varsubsetneq \mathcal A_{k+1}
\varsubsetneq \mathcal A_{\infty}, \quad k\in
\zbb_+,
   \end{align}
and Theorem~\ref{bst-Jan} is not true for
operators of class $\mathcal A_{\infty}$.
   \begin{example}[Example~\ref{egsimp} continued] \label{separ-1}
As in Example~\ref{egsimp}, we assume that the
unilateral weighted shift $W$ is
left-invertible. Clearly, $W$ is cyclic with the
cyclic vector $e_0$ and is expansive if and only
if
   \begin{align} \label{uno-1}
\text{$w_n \Ge 1$ for all $n\in \zbb_+.$}
   \end{align}
Fix $k\in \zbb_+$. Since $W$ and $W'$ are
unilateral weighted shifts, we have
   \begin{align} \label{pppp}
\ob{W^k} = \ob{W^{\prime k}} =
\bigvee_{n=k}^{\infty} \mathbb C e_n.
   \end{align}
This implies that $W$ and $W'$ are analytic. By
\eqref{pppp}, the operator $W'|_{\ob{W^{\prime
k}}}$ is unitarily equivalent to the unilateral
weighted shift with weights
$\{w_{k+n}^{-1}\}_{n=0}^{\infty}$. Hence, by
\cite[Proposition~II.6.6]{Co1},
$W'|_{\ob{W^{\prime k}}}$ is hyponormal if and
only if
   \begin{align} \label{uno-2}
\text{the sequence $\{w_{k+n}\}_{n=0}^{\infty}$
is monotonically decreasing.}
   \end{align}
Consider now the inequality
   \begin{align} \label{uno-3}
\limsup_{n\to \infty} w_n\Le 1,
   \end{align}
and note that the following implication is
valid.
   \begin{align} \label{lisisr}
   \begin{minipage}{65ex}
{\em If \eqref{uno-3} holds, then $r(W)\Le 1$.}
   \end{minipage}
   \end{align}
Indeed, if $\varepsilon > 0$, then $\sup_{n\Ge
l} w_n \Le 1+\varepsilon$ for some integer $l\Ge
1$, so
   \begin{align*}
\sup_{n\Ge 0} w_n \cdots w_{n+m-1} \Le M
(1+\varepsilon)^m, \quad m \Ge l+1,
   \end{align*}
with $M:= \max\big\{1, w_{l-1}, w_{l-1} \cdot
w_{l-2}, \ldots, w_{l-1} \cdots w_{0}\big\}.$
Together with Gelfand's spectral radius formula,
this implies that
   \begin{align*}
r(W) = \lim_{m \to \infty} \|W^m\|^{1/m}
=\lim_{m\to \infty} (\sup_{n\Ge 0} w_n \cdots
w_{n+m-1})^{1/m} \Le 1+ \varepsilon.
   \end{align*}
which yields $r(W)\Le 1$.

Combining \eqref{lisisr} and
Remark~\ref{rmk-Jan} with the analyticity of
$W'$, we obtain the following sufficient
condition for membership in the
class\footnote{Since \eqref{uno-1} and
\eqref{uno-3} imply that $\lim_{n\to \infty} w_n
= 1$, the implication \eqref{uno-6} also follows
from \cite[Proposition~15]{Sh} and
Remark~\ref{rmk-Jan}.} $\mathcal A_{\infty}$.
   \begin{align} \label{uno-6}
   \begin{minipage}{65ex}
{\em If {\em \eqref{uno-1}} and {\em
\eqref{uno-3}} hold, then $W$ is of class
$\mathcal A_{\infty}$.}
   \end{minipage}
   \end{align}
Moreover, the following necessary and sufficient
condition for membership in the class $\mathcal
A_k$ with $k \in \zbb_+$ can be deduced from
\cite[Proposition~15]{Sh} and
Remark~\ref{rmk-Jan}.
   \begin{align} \label{uno-5}
   \begin{minipage}{65ex}
\text{\em $W$ is of class $\mathcal A_k$ if and
only if {\em \eqref{uno-1}, \eqref{uno-2}} and
{\em \eqref{uno-3}} hold.}
   \end{minipage}
   \end{align}
It is now easy to deduce from \eqref{uno-6} and
\eqref{uno-5} that
   \begin{align*}
\mathcal A_{k} \varsubsetneq \mathcal A_{k+1}
\varsubsetneq \mathcal A_{\infty}, \quad k\in
\zbb_+.
   \end{align*}
To obtain the operator of class $\mathcal A_0$
that is not concave, we proceed as follows. By
\cite[Proposition~A.2]{JJS}, the unilateral
weighted shift $W$ with weight sequence
$\big\{\frac{n+2}{n+1}\big\}_{n=0}^{\infty}$ is
a strict $3$-isometric expansion. Since $W'$ is
the unilateral weighted shift with weight
sequence
$\big\{\frac{n+1}{n+2}\big\}_{n=0}^{\infty}$
which is increasing, $W'$ is hyponormal, meaning
that $W$ is of class $\mathcal A_0$. However, by
Proposition~\ref{gen-SA00}, $W$ is not concave.
This justifies all the strict inclusions
in~\eqref{uno-4}.

The modulus of the self-commutator $[W^*,W]$ of $W$ is the
diagonal operator (relative to $\{e_n\}_{n=0}^{\infty}$) with
diagonal elements $\{|w_n^2 - w_{n-1}^2|\}_{n=0}^{\infty}$,
where $w_{-1}:=0$. Suppose that $k\in \zbb_+$ and $W$ is of
class $\mathcal A_k$. Then, by \eqref{uno-5}, we~have
   \allowdisplaybreaks
   \begin{align*}
\sum_{n=k+1}^{\infty} |w_n^2 - w_{n-1}^2| &
\overset{\eqref{uno-2}}= \sum_{n=k+1}^{\infty}
(w_{n-1}^2 - w_n^2)
   \\
& \hspace{1ex}=\lim_{m\to \infty}
\sum_{n=k+1}^{m} (w_{n-1}^2 - w_n^2)
   \\
& \hspace{1ex} = \lim_{m\to \infty} (w_{k}^2 -
w_m^2)
   \\
& \hspace{-2.5ex}\overset{\eqref{uno-1} \&
\eqref{uno-3}}= w_{k}^2 -1 < \infty,
   \end{align*}
which means that $[W^*,W]$ is of trace class.
The above reasoning can be used to compute the
trace $\mathrm{tr}\, [W^*,W]$ of the
self-commutator $[W^*,W]$:
   \begin{align} \label{ticefurm}
\mathrm{tr}\, [W^*,W] = \sum_{n=0}^{\infty}
(w_n^2 - w_{n-1}^2) = \lim_{m\to \infty} w_m^2
\overset{\eqref{uno-1} \& \eqref{uno-3}}= 1.
   \end{align}
Certainly, the fact that $W$ has a trace class
self-commutator as well as the formula
\eqref{ticefurm} are particular cases of what is
in Theorem~\ref{bst-Jan} with the polynomials
$p(z,\bar z) = \bar z$ and $q(z,\bar z) = z$.

Observe that if $W$ satisfies \eqref{uno-1} and
\eqref{uno-3}, then $W$ is of class $\mathcal A_{\infty}$ and
the self-commutator $[W^*,W]$ of $W$ is compact. That $W$ is
of class $\mathcal A_{\infty}$ follows from \eqref{uno-6}. In
turn, by our assumptions $\lim_{n\to \infty} w_n=1$, which
implies that $[W^*,W]$, being the diagonal operator with
diagonal elements $\{w_n^2 - w_{n-1}^2\}_{n=0}^{\infty}$
converging to $0$, is compact.

Now we construct a unilateral weighted shift $W$
of class $\mathcal A_{\infty}$ such that the
self-commutator $[W^*,W]$ of $W$ is not of trace
class (recall that $W$ is analytic and cyclic).
Let $\{w_n\}_{n=0}^{\infty}$ be defined by
   \allowdisplaybreaks
   \begin{align*}
\underset{5 \text{ times}}{\underbrace{\alpha_1,
\alpha_2, \alpha_1, \alpha_2, \alpha_1}},
\ldots, \underset{2^{l+1} + 1\text{
times}}{\underbrace{\alpha_l, \alpha_{l+1},
\ldots, \alpha_l, \alpha_{l+1}, \alpha_l}},
\ldots,
   \end{align*}
where $\alpha_l = \sqrt{1+\frac 1{2^l}}$ for
$l=1,2,\ldots$. Straightforward computations
show that
   \begin{align*}
\sum_{n=0}^{\infty} |w_n^2 - w_{n-1}^2| = \infty,
   \end{align*}
which means that $[W^*,W]$ is not of trace
class. Since $W$ satisfies \eqref{uno-1} and
\eqref{uno-3}, we infer from \eqref{uno-6} that
$W$ is of class $\mathcal A_{\infty}$.
   \hfill $\diamondsuit$
   \end{example}
As shown in Example~\ref{separ-1} (see its last
paragraph), Theorem~\ref{bst-Jan} ceases to be
true for operators of class $\mathcal
A_{\infty}$. However, in this example $\mathcal
H_{u}'=\{0\}$ and the operator in question has a
compact self-commutator. This motivates us to
ask the following question which is closely
related to Corollaries~\ref{bdfir}
and~\ref{bst-chrup}.
   \begin{question}
Is an analytic finitely cyclic operator $T$ of
class $\mathcal A_{\infty}$ unitarily equivalent
to a compact perturbation of $U^n_+$, where
$n=\dim\ker(T^*)$?
   \end{question}
   \section{\label{Sec.7}Characterizations of classes $\mathcal A_k$}
   In this section, we characterize the
left-invertible operators $T$ satisfying the
condition (iii$_{k}$) of Definition~\ref{duffa},
where $k\in \zbb_+ \cup \{\infty\}$. In
particular, we obtain characterizations of
classes $\mathcal A_k.$ We begin with an
observation of independent interest.
   \begin{lemma}
Let $T \in \mathcal B(\hh)$ be left-invertible
and let $k$ be a positive integer. Then the
range of $T'^k$ is given by
   \begin{align*}
\ob{T^{\prime k}} = \bigcap_{j=0}^{k-1} \ker((I
- T'T^*)T^{*j}).
   \end{align*}
   \end{lemma}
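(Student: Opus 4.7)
The plan is to combine two elementary observations about the Cauchy dual: first, from Proposition~\ref{polur}(iii) we have $T'T^* = P_{\ob{T}}$, so
\begin{align*}
I - T'T^* = I - P_{\ob{T}} = P_{\ker T^*},
\end{align*}
which gives $\ker(I - T'T^*) = \ob{T}$. Since $(T^*T)^{-1}$ is invertible, $\ob{T'} = \ob{T(T^*T)^{-1}} = \ob{T}$; hence $\ker(I - T'T^*) = \ob{T'}$. Second, $T^* T' = T^*T(T^*T)^{-1} = I$, so $T^{*}$ is a left-inverse of $T'$ and therefore $T^{*j} T'^{j} = I$ for all $j \in \zbb_+$.

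For the inclusion $\ob{T'^k} \subseteq \bigcap_{j=0}^{k-1} \ker((I - T'T^*) T^{*j})$, I would take $x = T'^k y$ and, for $0 \le j \le k-1$, compute
\begin{align*}
T^{*j} x = T^{*j} T'^{j} T'^{k-j} y = T'^{k-j} y \in \ob{T'} = \ker(I - T'T^*),
\end{align*}
so $(I - T'T^*) T^{*j} x = 0$, as required.

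For the reverse inclusion, the idea is a short induction using the trivial identity
\begin{align*}
(I - T'T^*) T^{*j} x = 0 \iff T^{*j} x = T' T^{*(j+1)} x.
\end{align*}
Applying this successively for $j = 0, 1, \ldots, k-1$ to an $x$ in the right-hand side yields
\begin{align*}
x = T' T^* x = T'^2 T^{*2} x = \cdots = T'^{k} T^{*k} x \in \ob{T'^k}.
\end{align*}
There is no real obstacle: the whole content is encoded in the two identities $I - T'T^* = P_{\ker T^*}$ and $T^* T' = I$, and the only thing to be careful about is making sure the induction step uses the hypothesis corresponding to index $j$ at stage $j$, terminating at $j = k-1$ so that the final exponent is $k$.
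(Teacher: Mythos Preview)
Your proof is correct and is in fact a more self-contained argument than the one in the paper. The paper proceeds via orthogonal complements: it invokes \cite[Lemma~2.1]{S01}, which gives $\ker T^{\prime *k} = \bigvee_{j=0}^{k-1} T^j(\ker T^*)$, then takes complements and rewrites $(T^j(\ker T^*))^{\perp} = (\ob{T^j(I-P_{\ob{T}})})^{\perp} = \ker((I-T'T^*)T^{*j})$ using Proposition~\ref{polur}(iii). Your argument bypasses the external lemma entirely, establishing both inclusions directly from the two identities $T^*T'=I$ and $I-T'T^*=P_{\ker T^*}$; the inductive step $x = T'^j T^{*j}x \Rightarrow x = T'^{j+1}T^{*(j+1)}x$ is exactly right since $T^{*j}x = T'T^{*(j+1)}x$ by hypothesis. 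The paper's route has the advantage of situating the result within Shimorin's wandering-subspace framework (which is used elsewhere in the paper anyway), while yours is shorter and needs nothing beyond the basic Cauchy-dual identities.
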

   \begin{proof}
Using \cite[Lemma 2.1]{S01}, the fact that the
range of $T^{\prime k}$ is closed and finally
Proposition~\ref{polur}(iii), we get
   \allowdisplaybreaks
   \begin{align*}
\ob{T^{\prime k}} & = \bigg(\bigvee_{j=0}^{k-1}
T^j(\ker T^*)\bigg)^{\perp}
   \\
& = \bigcap_{j=0}^{k-1} \big(T^j(\ker
T^*)\big)^{\perp}
   \\
& = \bigcap_{j=0}^{k-1}
\big(\ob{T^j(I-P_{\ob{T}})}\big)^{\perp}
   \\
& = \bigcap_{j=0}^{k-1} \ker((I - T'T^*)T^{*j}).
   \end{align*}
This completes the proof.
   \end{proof}
The hyponormality of $T'|_{\ob{T^{\prime k}}}$
can now be characterized as follows.
   \begin{proposition} \label{ajaj-1}
Let $T \in \mathcal B(\hh)$ be left-invertible
and $k\in \zbb_+$. Set $C_k = T^{\prime *k}
T^{\prime k}$. Then $C_k$ is invertible and the
following conditions are equivalent{\em :}
   \begin{enumerate}
   \item[(i)] $T'|_{\ob{T^{\prime k}}}$ is
hyponormal,
   \item[(ii)] $C_k^{1/2} T' C_k^{-1/2}$ is hyponormal.
   \end{enumerate}
   \end{proposition}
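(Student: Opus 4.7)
The plan is to exhibit an explicit unitary equivalence between $T'|_{\ob{T^{\prime k}}}$ (acting on the closed subspace $\ob{T^{\prime k}}$) and the operator $C_k^{1/2} T' C_k^{-1/2}$ (acting on $\hh$), after which the equivalence of (i) and (ii) is immediate since hyponormality is invariant under unitary equivalence.

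First, I would justify that $C_k$ is invertible: by Remark~\ref{dual-rmk}, the Cauchy dual $T'$ is left-invertible, hence so is $T^{\prime k}$ (as noted in the proof of Proposition~\ref{W-w-c}), which is equivalent to $C_k = T^{\prime *k} T^{\prime k}$ being invertible in $\mathcal B(\hh)$. Next, I would set
\[
V := T^{\prime k} C_k^{-1/2} \in \mathcal B(\hh).
\]
A direct computation gives $V^{*}V = C_k^{-1/2} T^{\prime *k} T^{\prime k} C_k^{-1/2} = C_k^{-1/2} C_k C_k^{-1/2} = I$, so $V$ is an isometry. Since $C_k^{-1/2}$ is invertible on $\hh$, the range of $V$ coincides with $\ob{T^{\prime k}}$; hence, regarded as an operator $\hh \to \ob{T^{\prime k}}$, $V$ is unitary.

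The subspace $\ob{T^{\prime k}}$ is invariant for $T'$ because $T'(\ob{T^{\prime k}}) = \ob{T^{\prime k+1}} \subseteq \ob{T^{\prime k}}$, so the restriction $T'|_{\ob{T^{\prime k}}}$ is a well-defined element of $\mathcal B(\ob{T^{\prime k}})$. Conjugating by $V$ yields an operator on $\hh$:
\[
V^{*}\bigl(T'|_{\ob{T^{\prime k}}}\bigr) V = V^{*} T' V = C_k^{-1/2} T^{\prime *k} T' T^{\prime k} C_k^{-1/2}.
\]
Using $T' T^{\prime k} = T^{\prime k+1} = T^{\prime k} T'$, I get $T^{\prime *k} T' T^{\prime k} = T^{\prime *k} T^{\prime k} T' = C_k T'$, hence
\[
V^{*}\bigl(T'|_{\ob{T^{\prime k}}}\bigr) V = C_k^{-1/2} C_k T' C_k^{-1/2} = C_k^{1/2} T' C_k^{-1/2}.
\]

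This establishes that $T'|_{\ob{T^{\prime k}}}$ and $C_k^{1/2} T' C_k^{-1/2}$ are unitarily equivalent via the unitary $V : \hh \to \ob{T^{\prime k}}$, so (i) and (ii) are equivalent. There is no substantive obstacle in this argument; the only point requiring attention is the bookkeeping between operators on $\hh$ and on $\ob{T^{\prime k}}$, i.e., verifying that $V$ is unitary onto $\ob{T^{\prime k}}$ (and not merely an isometry on $\hh$) so that $V^{*}T'V$ genuinely represents the restriction $T'|_{\ob{T^{\prime k}}}$ transplanted to $\hh$.
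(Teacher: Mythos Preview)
Your proof is correct and rests on the same key identity as the paper's: the operator $V=T^{\prime k}C_k^{-1/2}$ is an isometry with $VV^{*}=T^{\prime k}C_k^{-1}T^{\prime *k}=P_{\ob{T^{\prime k}}}$. The paper uses this projection formula directly, translating the hyponormality of $T'|_{\ob{T^{\prime k}}}$ into the inequality $C_k T' C_k^{-1} T^{\prime *} C_k \Le T^{\prime *} C_k T'$ on all of $\hh$, and then conjugates by $C_k^{-1/2}$ to recognise the hyponormality of $C_k^{1/2}T'C_k^{-1/2}$. Your route packages the same computation as an explicit unitary equivalence $V^{*}(T'|_{\ob{T^{\prime k}}})V = C_k^{1/2}T'C_k^{-1/2}$, which is a cleaner and more conceptual statement: it makes the equivalence of the two hyponormality conditions immediate and, as a bonus, shows that all unitary invariants (not just hyponormality) transfer between the two operators. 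The paper's version has the minor advantage of staying entirely within operator inequalities on $\hh$, avoiding the bookkeeping you flag about $V$ being unitary onto $\ob{T^{\prime k}}$ rather than onto $\hh$.
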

   \begin{proof}
Since $T^{\prime k}$ is left-invertible, $C_k$ is invertible
and, by Proposition~\ref{polur}(iii), we have
   \begin{align} \label{su-1}
P_{\ob{T^{\prime k}}} = T^{\prime k}(T^{\prime
*k}T^{\prime k})^{-1}T^{\prime *k} = T^{\prime
k}C_k^{-1}T^{\prime *k}.
   \end{align}
It is easily seen that $T'|_{\ob{T^{\prime k}}}$
is hyponormal if and only if
   \begin{align*}
T^{\prime *k} T^{\prime} P_{\ob{T^{\prime
k}}}T^{\prime *} T^{\prime k} \Le T^{\prime
*(k+1)} T^{\prime (k+1)},
   \end{align*}
or equivalently, by \eqref{su-1}, if and only if
   \begin{align*}
C_k T' C_k^{-1} T^{\prime *} C_k \Le T^{\prime
*} C_k T^{\prime},
   \end{align*}
which in turn is equivalent to
   \begin{align*}
C_k^{1/2} T' C_k^{-1} T^{\prime *} C_k^{1/2} \Le
C_k^{-1/2} T^{\prime *} C_k T^{\prime}
C_k^{-1/2}.
   \end{align*}
As a consequence, $T'|_{\ob{T^{\prime k}}}$ is
hyponormal if and only if
   \begin{align*}
\|(C_k^{1/2} T^{\prime} C_k^{-1/2})^* h\|^2 \Le
\|(C_k^{1/2} T^{\prime} C_k^{-1/2}) h\|^2, \quad
h \in \hh,
   \end{align*}
which completes the proof.
   \end{proof}
   \begin{corollary} \label{sucud}
Suppose that $T\in \mathcal B(\hh)$ is left-invertible and
$T=U|T|$ is the polar decomposition of $T.$ Then
$T'|_{\ob{T'}}$ is hyponormal if and only if $|T|^{-1} U$ is
hyponormal. In particular, if $T'$ is hyponormal, then
$|T|^{-1} U$ is hyponormal.
   \end{corollary}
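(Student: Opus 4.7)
The plan is to obtain Corollary~\ref{sucud} as a direct specialization of Proposition~\ref{ajaj-1} to $k=1$, once we identify $C_1^{1/2} T' C_1^{-1/2}$ explicitly in terms of the polar decomposition data. First I would invoke Proposition~\ref{polur}(i), which asserts $|T'| = |T|^{-1}$. Setting $C_1 := T'^{*}T'$ as in Proposition~\ref{ajaj-1}, this gives
\begin{equation*}
C_1 = |T'|^{2} = |T|^{-2}, \qquad C_1^{1/2} = |T|^{-1}, \qquad C_1^{-1/2} = |T|,
\end{equation*}
so in particular $C_1$ is invertible (consistent with Proposition~\ref{ajaj-1}).

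Next I would compute $T'$ from the polar decomposition: since $T = U|T|$ is the polar decomposition, we have
\begin{equation*}
T' = T(T^{*}T)^{-1} = U|T| \cdot |T|^{-2} = U|T|^{-1}.
\end{equation*}
Substituting this and the formulas for $C_1^{\pm 1/2}$ into the key operator in Proposition~\ref{ajaj-1}(ii) yields
\begin{equation*}
C_1^{1/2}\, T'\, C_1^{-1/2} = |T|^{-1}\, \bigl(U|T|^{-1}\bigr)\, |T| = |T|^{-1} U.
\end{equation*}
Therefore, the equivalence (i)$\Leftrightarrow$(ii) of Proposition~\ref{ajaj-1} specializes precisely to the statement that $T'|_{\ob{T'}}$ is hyponormal if and only if $|T|^{-1}U$ is hyponormal.

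For the \emph{in particular} clause, I would argue that if $T'$ itself is hyponormal, then since $T$ is left-invertible, $\ob{T'}$ is a closed subspace invariant under $T'$, so the restriction $T'|_{\ob{T'}}$ is a restriction of a hyponormal operator to a closed invariant subspace, hence hyponormal by \cite[Proposition~II.4.4]{Co1} (the same citation used earlier in the paper). Applying the equivalence just established then gives hyponormality of $|T|^{-1}U$. I do not anticipate any significant obstacle; the only point requiring care is the bookkeeping in the computation of $C_1^{1/2} T' C_1^{-1/2}$, where the three factors $|T|^{-1}$, $U$, $|T|^{-1}$, $|T|$ must be multiplied in the correct order since $U$ and $|T|$ need not commute.
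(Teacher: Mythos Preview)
Your proof is correct and follows essentially the same approach as the paper: compute $C_1^{1/2} T' C_1^{-1/2} = |T|^{-1}U$ via the polar decomposition data and invoke Proposition~\ref{ajaj-1} with $k=1$. The paper's computation is phrased as $C_1^{1/2} T' C_1^{-1/2} = |T|^{-1} T |T|^{-1} = |T|^{-1} U$ (using $T' = T|T|^{-2}$ rather than $T' = U|T|^{-1}$), and it leaves the ``in particular'' clause implicit, but the substance is identical.
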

   \begin{proof}
It follows from Proposition~\ref{polur}(i) that
   \begin{align*}
C_1^{1/2} T' C_1^{-1/2} = |T|^{-1} T |T|^{-1} =
|T|^{-1} U.
   \end{align*}
Combined with Proposition~\ref{ajaj-1}, this
completes the proof.
   \end{proof}
   \begin{corollary}
If $T\in \mathcal B(\hh)$ is of class $\mathcal
A_k$ for some $k\in \zbb_+$, then $T'$ is
similar to a hyponormal operator.
   \end{corollary}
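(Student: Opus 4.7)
The plan is to read the conclusion directly off Proposition~\ref{ajaj-1} combined with the definition of class $\mathcal{A}_k$. By definition of $\mathcal{A}_k$ with $k \in \mathbb{Z}_+$, the operator $T$ satisfies condition (iii$_k$), namely $T'|_{\mathcal{R}(T'^k)}$ is hyponormal. Proposition~\ref{ajaj-1} tells us that this is equivalent to the hyponormality of $C_k^{1/2} T' C_k^{-1/2}$, where $C_k = T'^{*k} T'^k$, and moreover asserts that $C_k$ is invertible.

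The punchline is simply that invertibility of $C_k$ implies invertibility of $C_k^{1/2}$ (via the continuous functional calculus), so the identity
\begin{align*}
T' = C_k^{-1/2} \bigl( C_k^{1/2} T' C_k^{-1/2} \bigr) C_k^{1/2}
\end{align*}
exhibits $T'$ as similar, through the bounded invertible operator $C_k^{1/2}$, to the hyponormal operator $C_k^{1/2} T' C_k^{-1/2}$. That is the entire proof.

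There is no real obstacle here; the work has already been done in Proposition~\ref{ajaj-1}, which both produces the invertibility of $C_k$ and identifies the specific conjugate of $T'$ that is hyponormal. The only thing to mention for the reader is that when $T$ is merely left-invertible (rather than invertible), $T'^k$ is still left-invertible, so $C_k = T'^{*k} T'^k$ is a positive invertible operator in $\mathcal{B}(\mathcal{H})$, which legitimizes taking $C_k^{\pm 1/2}$ as bounded operators.
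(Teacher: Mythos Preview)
Your proof is correct and is exactly the argument the paper has in mind: the corollary is stated without proof immediately after Proposition~\ref{ajaj-1}, and the intended reasoning is precisely to read off from (iii$_k$) and Proposition~\ref{ajaj-1}(ii) that $C_k^{1/2} T' C_k^{-1/2}$ is hyponormal, with $C_k^{1/2}$ invertible.
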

   \begin{corollary} \label{aglus}
Let $T \in \mathcal B(\mathcal H)$ be an
operator of class $\mathcal A_k$ for some $k\in
\zbb_+.$ Then the operator $S_k:=C^{1/2}_k T'
C^{-1/2}_k$ is a $2$-hypercontraction, that is
   \begin{align} \label{agli}
\sum_{j=0}^m (-1)^j \binom{m}{j} S_k^{*j}S_k^j
\Ge 0, \quad m=1,2.
   \end{align}
In particular,
   \begin{align} \label{posiu}
T^{\prime *k}(I - 2 T^{\prime *}T' + T^{\prime
* 2}T'^2)T^{\prime k} \Ge 0.
   \end{align}
   \end{corollary}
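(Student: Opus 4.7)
The plan is to derive both \eqref{agli} and \eqref{posiu} from the fact that $S_k$ is a hyponormal contraction, using the classical observation that hyponormal contractions are automatically $2$-hypercontractions.

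First, I note that Proposition~\ref{ajaj-1} together with condition (iii$_k$) of Definition~\ref{duffa} immediately yields that $S_k = C_k^{1/2} T' C_k^{-1/2}$ is hyponormal. Next, I want to show that $S_k$ is a contraction. Because $T$ is an expansion (condition (ii)), we have $T^*T \Ge I$, so by Proposition~\ref{polur}(i) the Cauchy dual satisfies $T'^*T' = |T'|^2 = (T^*T)^{-1} \Le I$. A straightforward direct computation gives $S_k^j = C_k^{1/2} T'^j C_k^{-1/2}$ for every $j \Ge 0$, and hence
\begin{align*}
S_k^{*j} S_k^j = C_k^{-1/2}\, T'^{*j} C_k T'^j\, C_k^{-1/2} = C_k^{-1/2}\, C_{k+j}\, C_k^{-1/2}.
\end{align*}
In particular $S_k^* S_k = C_k^{-1/2} C_{k+1} C_k^{-1/2}$, and using $T'^*T' \Le I$ sandwiched by $T'^{*k}$ and $T'^k$ gives $C_{k+1} \Le C_k$, so $S_k^* S_k \Le I$.

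For the $m=1$ case of \eqref{agli} we are done. For $m=2$, I use the standard fact that any hyponormal operator $A$ satisfies $A^{*2} A^2 = A^*(A^*A)A \Ge A^*(AA^*)A = (A^*A)^2$. Applying this to $A = S_k$ gives
\begin{align*}
I - 2 S_k^* S_k + S_k^{*2} S_k^2 \Ge I - 2 S_k^* S_k + (S_k^* S_k)^2 = (I - S_k^* S_k)^2 \Ge 0,
\end{align*}
which is exactly the $m=2$ case of \eqref{agli}.

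Finally, to deduce \eqref{posiu}, I translate \eqref{agli} with $m=2$ back through the similarity. Using $S_k^{*j} S_k^j = C_k^{-1/2} C_{k+j} C_k^{-1/2}$ just computed, we obtain
\begin{align*}
C_k^{-1/2}\bigl(C_k - 2 C_{k+1} + C_{k+2}\bigr) C_k^{-1/2} \Ge 0,
\end{align*}
and since $C_k$ is invertible (Proposition~\ref{ajaj-1}), conjugation by $C_k^{1/2}$ yields $C_k - 2 C_{k+1} + C_{k+2} \Ge 0$, which is precisely $T^{\prime *k}(I - 2T^{\prime *}T' + T^{\prime *2}T'^2)T^{\prime k} \Ge 0$. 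No step looks genuinely delicate; the only thing to be careful about is the manipulation of the similarity $C_k^{1/2} \,\cdot\, C_k^{-1/2}$ when computing powers of $S_k$ and identifying $T'^{*j} C_k T'^j = C_{k+j}$.
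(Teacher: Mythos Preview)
Your proof is correct and follows essentially the same approach as the paper's: both invoke Proposition~\ref{ajaj-1} to get hyponormality of $S_k$, use the standard hyponormal-contraction estimate $S_k^{*2}S_k^2 \Ge (S_k^*S_k)^2$ to obtain the $m=2$ inequality, and verify the contraction property via $C_{k+1}\Le C_k$. Your presentation is slightly more streamlined in that you compute the general identity $S_k^{*j}S_k^j = C_k^{-1/2}C_{k+j}C_k^{-1/2}$ once and reuse it for both the contraction check and the passage to \eqref{posiu}, whereas the paper carries out these computations separately and in a different order.
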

   \begin{proof}
By Proposition 7.2, the operator $S_k$ is
hyponormal. It follows that
   \begin{align*}
I - 2S_k^*S_k + S_k^{*2}S_k^2 \Ge I - 2S_k^*S_k
+ S_k^{*}S_kS_k^*S_k = (I-S_k^*S_k)^2 \Ge 0,
   \end{align*}
which gives \eqref{agli} for $m=2$ (this is true
for all hyponormal operators). It is now easy to
see that \eqref{agli} for $m=2$ is equivalent to
   \begin{align*}
C_k - 2T^{\prime *}C_kT^{\prime} + T^{\prime
*2}C_kT^{\prime 2} \Ge 0,
   \end{align*}
which in turn is equivalent to \eqref{posiu}.

Since $T^*T \Ge I, $ we deduce that
   \begin{align*}
C_1=T^{\prime *} T^{\prime} = (T^*T)^{-1} \Le I.
   \end{align*}
Consequently, we have
   \begin{align*}
C_{k+1} = T^{\prime * k} C_1 T^{\prime k} \Le
T^{\prime * k} T^{\prime k} = C_k, \quad k \in
\zbb_+,
   \end{align*}
or equivalently that
   \begin{align*}
C_k^{-1/2}C_{k+1} C_k^{-1/2} \Le I, \quad k\in
\zbb_+.
   \end{align*}
This gives
   \begin{align*}
S_k^*S_k = C^{-1/2}_k T^{\prime *} C_k T'
C^{-1/2}_k = C^{-1/2}_k C_{k+1} C^{-1/2}_k \Le
I, \quad k\in \zbb_+,
   \end{align*}
which shows that \eqref{agli} holds for $m=1.$
This completes the proof.
   \end{proof}
   \begin{remark}
Regarding the operator $C_k^{1/2} T' C_k^{-1/2}$ that appears
in Proposition~\ref{ajaj-1}, observe that if $A\in \mathcal
B(\hh)$ is positive and invertible and $T\in \mathcal
B(\hh)$, then the adjoint of $T$ with respect to the
(equivalent) inner product $\inp{A(\cdot)}{\mbox{-}}$, is
equal to $(ATA^{-1})^*$. In turn, if $T\in \mathcal B(\hh)$
is left-invertible and $T=U|T|$ is the polar decomposition of
$T$, then by Proposition~\ref{polur}, $T'= U|T|^{-1}$ is the
polar decomposition of $T',$ so the operator $|T|^{-1} U$
that appears in Corollary~\ref{sucud} is the Duggal
transform\footnote{If $R=W|R|$ is the polar decomposition of
$R \in \mathcal B (\hh),$ then $|R|W$ is called the {\em
Duggal transform} of $R$ (see \cite{F-J-K-P03}).} of $T'.$
Finally, the concept of $m$-hypercontractivity that appears
in Corollary~\ref{aglus} was introduced by Agler
in~\cite{Ag85}.
   \hfill $\diamondsuit$
   \end{remark}
The following characterization of the
hyponormality of $T'|_{\hh_{u}'}$ can be viewed
as an asymptotic version of
Proposition~\ref{ajaj-1} (see
Remark~\ref{tuzaref}).
   \begin{proposition} \label{inft-1}
Let $T \in \mathcal B(\hh)$ be left-invertible
and let $C_k = T^{\prime *k} T^{\prime k}$ for
$k\in \zbb_+$. Then $\{T^{\prime (k+1)} C_k^{-1}
T^{\prime * (k+1)}\}_{k=0}^{\infty}$ and
$\{T^{\prime k}C_k^{-1} C_{k+1} C_k^{-1}
T^{\prime * k}\}_{k=0}^{\infty}$ converge in
{\em SOT} $($the strong operator topology$)$ to
positive operators denoted by $A$ and $B$,
respectively, and the following conditions are
equivalent{\em :}
   \begin{enumerate}
   \item[(i)] $T'|_{\hh_{u}'}$ is
hyponormal,
   \item[(ii)] $A \Le B$,
   \item[(iii)] for all $h\in \hh$,
   \begin{align*}
\lim_{k\to \infty} \|(C_k^{1/2}T'C_k^{-1/2})^*
C_k^{-1/2} T^{\prime * k}h\|^2 \Le \lim_{k\to
\infty} \|C_k^{1/2}T'C_k^{-1/2} C_k^{-1/2}
T^{\prime * k}h\|^2,
   \end{align*}
   \item[(iv)]
$P_{\hh_{u}'} \Big((T^*T)^{-1} - T^{\prime}
P_{\hh_{u}'} T^{\prime *}\Big) P_{\hh_{u}'} \Ge
0$.
   \end{enumerate}
Moreover, $A=P_{\hh_{u}'} T^{\prime}
P_{\hh_{u}'} T^{\prime *} P_{\hh_{u}'}$ and
$B=P_{\hh_{u}'} (T^*T)^{-1} P_{\hh_{u}'}.$
   \end{proposition}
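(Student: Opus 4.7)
The starting observation is that the ranges $\mathcal{R}(T'^k)$ form a decreasing family of closed subspaces (they are closed because $T'$, being left-invertible by Proposition~\ref{polur}, is bounded below, hence so is every $T'^k$). Consequently $\{P_{\mathcal{R}(T'^k)}\}_{k=0}^{\infty}$ is a decreasing sequence of projections, and by the monotone convergence theorem for bounded self-adjoint operators it converges in SOT to the projection onto $\bigcap_k \mathcal{R}(T'^k) = \mathcal{H}'_u$. This single fact drives everything else.

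The identification of $A$ and $B$ is then a direct consequence of \eqref{su-1}. Rewriting
\[
T'^{k+1} C_k^{-1} T'^{*(k+1)} = T'\,(T'^k C_k^{-1} T'^{*k})\,T'^* = T'\, P_{\mathcal{R}(T'^k)}\, T'^*
\]
and invoking the fact that left/right multiplication by a fixed bounded operator preserves SOT convergence gives SOT-convergence to $T' P_{\mathcal{H}'_u} T'^*$. Since $\mathcal{H}'_u$ is $T'$-invariant, $T' P_{\mathcal{H}'_u} = P_{\mathcal{H}'_u} T' P_{\mathcal{H}'_u}$, which easily yields $A = P_{\mathcal{H}'_u} T' P_{\mathcal{H}'_u} T'^* P_{\mathcal{H}'_u}$. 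For $B$, I would use $C_{k+1} = T'^{*k} (T'^*T') T'^k = T'^{*k} (T^*T)^{-1} T'^k$ (by Proposition~\ref{polur}(i)) to factor
\[
T'^k C_k^{-1} C_{k+1} C_k^{-1} T'^{*k} = (T'^k C_k^{-1} T'^{*k})\,(T^*T)^{-1}\,(T'^k C_k^{-1} T'^{*k}) = P_{\mathcal{R}(T'^k)}(T^*T)^{-1} P_{\mathcal{R}(T'^k)},
\]
and then the SOT limit is $P_{\mathcal{H}'_u} (T^*T)^{-1} P_{\mathcal{H}'_u} = B$.

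For the equivalences, the key point is that (iv) is literally the statement $A \leqslant B$, so (ii)$\Leftrightarrow$(iv). The equivalence (i)$\Leftrightarrow$(iv) follows because $\mathcal{H}'_u$ reduces nothing a priori but is $T'$-invariant, hence $(T'|_{\mathcal{H}'_u})^* = P_{\mathcal{H}'_u} T'^*|_{\mathcal{H}'_u}$, and hyponormality of $T'|_{\mathcal{H}'_u}$ translates directly into
\[
\langle T' P_{\mathcal{H}'_u} T'^* x, x\rangle \leqslant \langle T'^*T' x, x\rangle = \langle (T^*T)^{-1} x, x\rangle, \quad x \in \mathcal{H}'_u,
\]
which is (iv). Finally, for (ii)$\Leftrightarrow$(iii), I would introduce $S_k := C_k^{1/2} T' C_k^{-1/2}$ and $g_k := C_k^{-1/2} T'^{*k} h$, and compute
\[
\|S_k^* g_k\|^2 = \langle T'^{k+1} C_k^{-1} T'^{*(k+1)} h, h\rangle, \qquad \|S_k g_k\|^2 = \langle T'^k C_k^{-1} C_{k+1} C_k^{-1} T'^{*k} h, h\rangle,
\]
which converge to $\langle A h, h\rangle$ and $\langle B h, h\rangle$ respectively, so condition (iii) is exactly $\langle A h, h\rangle \leqslant \langle B h, h\rangle$ for all $h \in \mathcal{H}$, i.e., $A \leqslant B$.

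The only genuine obstacle is SOT-convergence of the two sequences to the correct operators; once that is in hand and $A, B$ are identified as above, the four conditions collapse to the single inequality $A \leqslant B$, and all the equivalences are immediate.
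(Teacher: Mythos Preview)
Your proposal is correct and follows essentially the same route as the paper: both arguments rest on the SOT-convergence $P_{\mathcal{R}(T'^k)}\searrow P_{\mathcal{H}'_u}$, rewrite the two sequences as $T'P_kT'^{*}$ and $P_k(T^*T)^{-1}P_k$ via \eqref{su-1} and $C_{k+1}=T'^{*}C_kT'$, and then read off the limits and the equivalences. The only cosmetic differences are that the paper expresses the first sequence as $(P_kT'^{*}P_k)^*(P_kT'^{*}P_k)$ before passing to the limit, and establishes (i)$\Leftrightarrow$(ii) directly rather than going through (iv); you should make explicit that $P_kXP_k\to P_\infty XP_\infty$ in SOT uses the uniform boundedness of the $P_k$ (sequential SOT-continuity of multiplication on norm-bounded sets), but otherwise nothing is missing.
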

   \begin{proof}
Set $P_k=P_{\ob{T^{\prime k}}}$ for $k \in
\zbb_+$ and $P_{\infty} = P_{\hh_{u}'}$. Since
$\ob{T^{\prime k}} \searrow \hh_{u}'$ as
$k\nearrow \infty$, we deduce that $P_k \searrow
P_{\infty}$ as $k\nearrow \infty$ in SOT. By the
sequential SOT-continuity of multiplication in
$\mathcal B(\hh)$, we deduce that
   \begin{align} \label{zbiezn}
   \begin{minipage}{64ex}
{\em the sequences $\{P_k T^{\prime *}
P_k\}_{k=0}^{\infty}$ and $\{(P_k T^{\prime *}
P_k)^*P_k T^{\prime *} P_k\}_{k=0}^{\infty}$
converge in SOT to $P_{\infty} T^{\prime *}
P_{\infty}$ and $P_{\infty} T^{\prime}
P_{\infty} T^{\prime *} P_{\infty}$,
respectively,}
   \end{minipage}
   \end{align}
and by Proposition~\ref{polur}(i),
   \begin{align} \label{zbiezm}
   \begin{minipage}{64ex}
{\em the sequences $\{T' P_k\}_{k=0}^{\infty}$
and $\{(T'P_k)^*T'P_k\}_{k=0}^{\infty}$ converge
in SOT to $T'P_{\infty}$ and $P_{\infty}
T^{\prime *} T^{\prime} P_{\infty} = P_{\infty}
(T^*T)^{-1} P_{\infty}$, respectively.}
   \end{minipage}
   \end{align}
Observe now that,
   \begin{align*}
P_k T^{\prime *} P_k \overset{\eqref{su-1}}= T^{\prime
k}C_k^{-1}T^{\prime *k} T^{\prime *} T^{\prime
k}C_k^{-1}T^{\prime *k} = T^{\prime k}C_k^{-1}T^{\prime
*(k+1)}, \quad k\in \zbb_+,
   \end{align*}
which yields
   \begin{align} \notag
(P_k T^{\prime *} P_k)^*P_k T^{\prime *} P_k &= T^{\prime
(k+1)}C_k^{-1} T^{\prime * k} T^{\prime k}C_k^{-1}T^{\prime
*(k+1)}
   \\ \label{wiki-1}
& =T^{\prime (k+1)}C_k^{-1} T^{\prime *(k+1)}, \quad k\in
\zbb_+.
   \end{align}
Combined with \eqref{zbiezn}, this implies that the sequence
$\{T^{\prime (k+1)}C_k^{-1} T^{\prime
*(k+1)}\}_{n=0}^{\infty}$ converges in SOT to $A=P_{\infty}
T^{\prime} P_{\infty} T^{\prime *} P_{\infty}$, and
   \begin{align} \label{sumir}
\langle{Ah},h \rangle= \lim_{k\to \infty}
\|(C_k^{1/2}T'C_k^{-1/2})^* C_k^{-1/2} T^{\prime
* k}h\|^2, \quad h\in \hh.
   \end{align}
A similar argument shows that
   \begin{align}  \label{wiki-2}
(T'P_k)^*T'P_k = T^{\prime k}C_k^{-1}
C_{k+1}C_k^{-1}T^{\prime *k}, \quad k\in \zbb_+,
   \end{align}
so by \eqref{zbiezm} the sequence $\{T^{\prime k}C_k^{-1}
C_{k+1} C_k^{-1} T^{\prime * k}\}_{n=0}^{\infty}$ converges
in SOT to $B=P_{\infty} (T^*T)^{-1} P_{\infty}$, and
moreover:
   \begin{align} \label{sumiv}
\langle{Bh},h \rangle= \lim_{k\to \infty}
\|(C_k^{1/2}T'C_k^{-1/2}) C_k^{-1/2} T^{\prime * k}h\|^2,
\quad h\in \hh.
   \end{align}
It is easy to see that $T'|_{\hh_{u}'}$ is
hyponormal if and only if
   \begin{align*}
\|P_{\infty} T^{\prime *} P_{\infty} h\|^2 \Le
\|T' P_{\infty} h\|^2, \quad h \in \hh,
   \end{align*}
or equivalently by \eqref{zbiezn} and
\eqref{zbiezm} if and only if
   \begin{align*}
\lim_{k\to \infty} \|P_k T^{\prime *} P_k h\|^2
\Le \lim_{k\to \infty} \|T'P_k h\|^2, \quad h\in
\hh.
   \end{align*}
Combined with \eqref{wiki-1} and \eqref{wiki-2},
this shows that (i) and (ii) are equivalent.
That (ii) and (iii) are equivalent follows from
\eqref{sumir} and \eqref{sumiv}. Finally, since
$A=P_{\infty} T^{\prime} P_{\infty} T^{\prime *}
P_{\infty}$ and $B=P_{\infty} (T^*T)^{-1}
P_{\infty}$, it is easy to see that the
conditions (ii) and (iv) are equivalent. This
completes the proof.
   \end{proof}
   \begin{remark} \label{tuzaref}
Regarding Proposition~\ref{inft-1}(iii), note
that in view of Proposition~\ref{polur}(i)
applied to $T^{\prime k}$, the operator
$T^{\prime k}C_k^{-1/2}$ is an isometry. This
implies that the range of $C_k^{-1/2} T^{\prime
* k}$ is equal to $\hh$. As a consequence,
for a fixed $k\in \zbb_+$, the inequality
   \begin{align*}
\|(C_k^{1/2}T'C_k^{-1/2})^* C_k^{-1/2} T^{\prime
* k}h\|^2 \Le
\|C_k^{1/2}T'C_k^{-1/2} C_k^{-1/2} T^{\prime *
k}h\|^2, \quad h\in \hh,
   \end{align*}
is equivalent to
   \begin{align*}
\|(C_k^{1/2}T'C_k^{-1/2})^* h\|^2 \Le
\|C_k^{1/2}T'C_k^{-1/2} h\|^2, \quad h\in \hh,
   \end{align*}
which according to Proposition~\ref{ajaj-1} is
equivalent to the hyponormality of
$T'|_{\ob{T^{\prime k}}}$. From this point of
view, Proposition~\ref{inft-1} can be seen as an
asymptotic version of Proposition~\ref{ajaj-1}.
   \hfill $\diamondsuit$
   \end{remark}
   \section{$C^*$- and $W^*$-algebra analogues of $\mathcal A_k$}
We conclude the paper by discussing the
possibility of defining membership in the
classes $\mathcal A_k$, $k\in \zbb_+ \cup
\{\infty\}$, for elements of an abstract unital
$C^*$-algebra. This question is in the spirit of
\cite{B-D77,Bun78,Sz82}.

Let $\mathcal C$ be a $C^*$-algebra with unit
$e.$ Set $\mathcal C_r := \{t\in \mathcal
C\colon \|t\|\Le r\}$ for $r\in (0,\infty).$ Fix
$t\in \mathcal C$. Denote by $C^*(t)$ the
$C^*$-algebra generated by $t$ and $e.$ We say
that $t$ is {\em hyponormal} if $t^*t - tt^*$ is
a positive element of $\mathcal C$. If $t^*t$ is
invertible, then the element $t':= t
(t^*t)^{-1}$ is called the {\em Cauchy dual} of
$t$. Clearly, if $t^*t$ is invertible, then $t$
is left-invertible (with the left-inverse
$t^{\prime *}$). The converse implication is
true, as can be seen by using the
Gelfand-Naimark representation theorem (see
\cite[Theorem~12.41]{Rud91}) and
\cite[Proposition~VIII.1.14]{Co}. Namely, the
following equivalences are valid:
   \begin{align} \label{rownow}
\text{$t^*t$ is invertible $\Leftrightarrow$
$t^*t \Ge \varepsilon e$ for some $\varepsilon >
0$ $\Leftrightarrow$ $t$ is left-invertible.}
   \end{align}
In particular, if $t$ is an {\em expansion},
that is $t^*t \Ge e$, then $t$ is
left-invertible. Moreover, if $t$ is
left-invertible, then so is $t'$ and
consequently $t^{\prime k}$ for every $k\in
\zbb_+$. As a consequence of \eqref{rownow},
$c_k := t^{\prime *k} t^{\prime k}$ is
invertible. Observe also that if $t\in \mathcal
C$ is left-invertible, then
$t=(t^{\prime})^{\prime}$ so by \eqref{rownow}
and \cite[Proposition~VIII.1.14]{Co}, the
$C^*$-algebras $C^*(t)$ and $C^*(t^{\prime})$
coincide. We now give the $C^*$-algebra analogue
of the class $\mathcal A_k$ for $k\in \zbb_+.$
   \begin{definition} \label{duffa-uv} Let
$\mathcal C$ be a unital $C^*$-algebra and let
$k\in \zbb_+.$ We say that $t\in \mathcal C$ is
{\em of class} $\mathcal A_k$ if
   \begin{enumerate}
   \item[$\bullet$] the spectral radius of
$t$ is at most $1$,
   \item[$\bullet$] $t$ is an expansion,
   \item[$\bullet$] $c_k^{1/2} t' c_k^{-1/2}$ is
hyponormal.
   \end{enumerate}
   \end{definition}
Using Remark~\ref{rmk-Jan} and
Proposition~\ref{ajaj-1}, we see that
Definitions~\ref{duffa} and \ref{duffa-uv}
coincide for $k\in \zbb_+$ if $\mathcal C =
\mathcal B(\hh)$.
   \begin{remark}
First observe that if $\mathcal C$ is a
$C^*$-algebra with unit $e$ and $t\in \mathcal
C$ is of class $\mathcal A_k$ for some $k\in
\zbb_+,$ then $C^*(t)$ is commutative if and
only if $t$ is unitary, that is, $t^*t=tt^*=e.$

Note that if $t$ is a left-invertible element of a unital
$C^*$-algebra $\mathcal C$, then $t^{\prime}=t$ if and only
if $t$ is an isometric element of $\mathcal C$ (use the fact
that $t^{\prime *}$ is a left-inverse of $t$). It is worth
mentioning that the $C^*$-algebra generated by a single
isometry was studied in \cite{Co67}.

If $\mathcal C_1$ and $\mathcal C_2$ are two unital
$C^*$-algebras, $\pi\colon \mathcal C_1 \to \mathcal C_2$ is
a $*$-isomorphism and $t_1\in \mathcal C_1$ and $t_2\in
\mathcal C_2$ are such that $\pi(t_1) = t_2,$ then for each
$k\in \zbb_+$, $t_1$ is of class $\mathcal A_k$ if and only
if $t_2$ is of class $\mathcal A_k.$
   \hfill $\diamondsuit$
   \end{remark}
The case of the class $\mathcal A_{\infty}$ is
more delicate. As we will show below, the
membership in this class can be satisfactorily
defined in the context of $W^*$-algebras, which
are more special objects compared to
$C^*$-algebras (see Definitions~ \ref{duffa-w1s}
and \ref{duffa-inf}). Several properties of
certain sequences associated with an expansion
are needed for this.
   \begin{proposition} \label{DOD-WL}
Let $\mathcal C$ be a $C^*$-algebra with unit
$e$ and $t\in \mathcal C$ be an expansion.
Define the sequences $\{p_k\}_{k=0}^{\infty},$
$\{u_k\}_{k=0}^{\infty}$ and
$\{v_k\}_{k=0}^{\infty}$ in $\mathcal C$ by
   \begin{align} \label{dufen}
\text{$p_k=t^{\prime k}c_k^{-1}t^{\prime *k},$
$u_k= t^{\prime (k+1)} c_k^{-1} t^{\prime *
(k+1)}$ and $v_k = t^{\prime k}c_k^{-1} c_{k+1}
c_k^{-1} t^{\prime * k}.$}
   \end{align}
Then the following statements hold for all $k\in
\zbb_+${\em :}
   \begin{enumerate}
   \item[(i)]
$p_k=p_k^*p_k,$ $p_kp_{k+1}=p_{k+1}$ and $0 \Le
p_{k+1} \Le p_k \Le e,$
   \item[(ii)]
$0 \Le c_{k+1} \Le c_k \Le e$ and $c_k^{-1/2}
c_{k+1} c_k^{-1/2} \Le e,$
   \item[(iii)]
$u_k=t^{\prime} p_k t^{\prime *},$ $0 \Le
t^{\prime (k+1)} t^{\prime * (k+1)} \Le u_k \Le
p_{k+1} \Le e$ and $u_{k+1} \Le u_k,$
   \item[(iv)]
$v_{k+1} = p_{k+1} v_k p_{k+1},$ $v_k = p_k
t^{\prime *}t^{\prime} p_k$ and $0 \Le v_k \Le
p_k \Le e.$
   \end{enumerate}
Moreover, $t$ is an expansive element of $C^*(t)$ and the
elements $t^{\prime}$, $c_k$, $c_k^{-1}$, $p_k$, $u_k$ and
$v_k$ belong to $C^*(t).$
   \end{proposition}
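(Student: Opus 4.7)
The plan is to derive everything from two preliminary identities: $t^{\prime *}t^{\prime} = (t^*t)^{-1}$, which together with the expansion hypothesis $t^*t \Ge e$ yields $t^{\prime *}t^{\prime} \Le e$ and $c_0 = e$; and the recursion $c_{k+1} = t^{\prime *} c_k t^{\prime}$, which is immediate from $t^{\prime (k+1)} = t^{\prime} \cdot t^{\prime k}$. Sandwiching $t^{\prime *}t^{\prime} \Le e$ by $t^{\prime *k}$ and $t^{\prime k}$ gives $c_{k+1} \Le c_k$, and inductively $c_k \Le c_0 = e$; equivalently $c_k^{-1/2} c_{k+1} c_k^{-1/2} \Le e$, together with $c_k^{-1} \Ge c_{k+1}^{-1} \Ge e$. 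All invertibility assertions for $c_k$ follow from \eqref{rownow} applied to the left-invertible elements $t^{\prime k}$. This disposes of (ii).

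For (i), $p_k$ is visibly self-adjoint, and a direct calculation using $t^{\prime *k} t^{\prime k} c_k^{-1} = e$ gives $p_k^2 = p_k$, so $p_k$ is a projection and in particular $p_k = p_k^* p_k$. Using $t^{\prime *k} t^{\prime (k+1)} = c_k t^{\prime}$, the identity $p_k p_{k+1} = p_{k+1}$ reduces to routine algebra. Taking adjoints yields $p_{k+1} p_k = p_{k+1}$, so $p_k - p_{k+1}$ is a self-adjoint idempotent and hence a positive element; this gives $p_{k+1} \Le p_k$, while $p_k \Le e$ is automatic for any projection.

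Statement (iii) is obtained by index-shifting: $u_k = t^{\prime}(t^{\prime k} c_k^{-1} t^{\prime *k}) t^{\prime *} = t^{\prime} p_k t^{\prime *}$, from which $u_{k+1} \Le u_k$ is a consequence of $p_{k+1} \Le p_k$. The remaining inequalities $t^{\prime (k+1)} t^{\prime *(k+1)} \Le u_k \Le p_{k+1} \Le e$ follow from $u_k - t^{\prime (k+1)} t^{\prime *(k+1)} = t^{\prime (k+1)}(c_k^{-1} - e) t^{\prime *(k+1)}$ and $p_{k+1} - u_k = t^{\prime (k+1)}(c_{k+1}^{-1} - c_k^{-1}) t^{\prime *(k+1)}$, both nonnegative by~(ii). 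For~(iv), the key step is simplifying $t^{\prime *k} t^{\prime *} t^{\prime} t^{\prime k} = c_{k+1}$, which yields $v_k = p_k t^{\prime *} t^{\prime} p_k$. Then $v_{k+1} = p_{k+1} v_k p_{k+1}$ follows from $p_k p_{k+1} = p_{k+1} = p_{k+1} p_k$ established above, while $0 \Le v_k \Le p_k$ is obtained by sandwiching $0 \Le t^{\prime *}t^{\prime} \Le e$ between $p_k$ and $p_k$.

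For the ``moreover'' clause, expansiveness of $t$ in $C^*(t)$ is immediate because positivity of the self-adjoint element $t^*t - e$ is determined by its spectrum, which coincides in $\mathcal C$ and in any unital $C^*$-subalgebra containing it (spectral permanence for self-adjoint elements). The same spectral-permanence argument applied to $c_k$, together with the invertibility of $c_k$ in $\mathcal C$ guaranteed by \eqref{rownow}, shows $c_k^{-1} \in C^*(t)$; hence $t^{\prime} = t(t^*t)^{-1}$, and therefore $p_k$, $u_k$ and $v_k$ all belong to $C^*(t)$. I do not anticipate a significant obstacle here: all computations are short algebraic manipulations, and the only care required is to ensure that positivity and invertibility are tracked uniformly in the abstract $C^*$-algebra, which is handled by \eqref{rownow} and by spectral permanence for self-adjoint elements, both of which reduce via Gelfand--Naimark to the continuous functional calculus already invoked in the excerpt.
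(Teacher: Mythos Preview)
Your proof is correct and follows essentially the same approach as the paper's, including the same order of deductions and the same spectral-permanence argument for the ``moreover'' clause. One minor slip: you wrote $c_k^{-1} \Ge c_{k+1}^{-1} \Ge e$, but the correct direction (which you in fact invoke later in part~(iii) when asserting $c_{k+1}^{-1} - c_k^{-1} \Ge 0$) is $c_{k+1}^{-1} \Ge c_k^{-1} \Ge e$, since inversion reverses the operator order on positive invertible elements.
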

   \begin{proof}
(i) By definition, $p_k=p_k^*$ and
   \begin{align} \label{pikay}
p_k^2 = t^{\prime k}c_k^{-1}(t^{\prime *k}
t^{\prime k})c_k^{-1}t^{\prime *k} =p_k, \quad
k\in \zbb_+,
   \end{align}
so $p_k=p_k^*p_k$ for all $k\in \zbb_+.$ Since
   \begin{align} \label{pikaypi}
p_kp_{k+1} = t^{\prime k}c_k^{-1}(t^{\prime *k}
t^{\prime k})t' c_{k+1}^{-1}t^{\prime *(k+1)}=
p_{k+1}, \quad k\in \zbb_+,
   \end{align}
and thus $p_kp_{k+1}=p_{k+1}p_k$ for all $k\in
\zbb_+,$ we deduce that
   \begin{align*}
(p_k-p_{k+1})^2 \overset{\eqref{pikay}}= p_k - 2
p_k p_{k+1} + p_{k+1} \overset{\eqref{pikaypi}}=
p_k-p_{k+1}, \quad k \in \zbb_+.
   \end{align*}
Hence, $p_{k+1} \Le p_k$ for all $k\in \zbb_+$.
Since $e-p_k$ is a selfadjoint idempotent, we
see that $p_k \Le e$ for all $k\in \zbb_+.$ This
proves (i).

(ii) We can argue as in the second paragraph of
the proof of Corollary~\ref{aglus} (the
arguments given there are purely
$C^*$-algebraic).

(iii) The identity $u_k=t^{\prime} p_k t^{\prime *}$ is
obvious. It follows from (ii) that $c_k^{-1} \Ge e$ and
$c_k^{-1} \Le c_{k+1}^{-1}$ for all $k\in \zbb_+$, so
   \begin{align*}
0 \Le t^{\prime (k+1)} t^{\prime * (k+1)} & \Le
\overset{u_k}{\overbrace{t^{\prime (k+1)}
c_k^{-1} t^{\prime
* (k+1)}}}
   \\
& \Le t^{\prime (k+1)} c_{k+1}^{-1} t^{\prime *
(k+1)} = p_{k+1} \overset{\mathrm{(i)}} \Le e,
\quad k\in \zbb_+,
   \end{align*}
which gives the second part of (iii). The third
part of (iii) follows from
   \begin{align*}
u_{k+1} = t^{\prime} p_{k+1} t^{\prime *}
\overset{\mathrm{(i)}} \Le t^{\prime} p_k
t^{\prime *} = u_k, \quad k \in \zbb_+.
   \end{align*}

(iv) It is easy to see that the identity $v_k =
p_k t^{\prime *}t^{\prime} p_k$ is valid. Hence,
by (i) and (ii), we have
   \begin{align*}
0 \Le v_k = p_k c_1 p_k \Le p_k^2=p_k \Le e,
\quad k \in \zbb_+,
   \end{align*}
which proves the third part of (iv). Finally, we
come to the conclusion that
   \begin{align*}
v_{k+1} = p_{k+1} (t^{\prime *} t^{\prime})
p_{k+1} \overset{\mathrm{(i)}}= p_{k+1}
(p_k(t^{\prime *} t^{\prime})p_k) p_{k+1} =
p_{k+1} v_k p_{k+1}, \quad k\in \zbb_+,
   \end{align*}
which gives the first identity in (iv).

The proof of the ``moreover'' part is as
follows. That $t$ is an expansive element of
$C^*(t)$ and $t^{\prime}, c_k, c_k^{-1} \in
C^*(t)$ for every $k\in \zbb_+$ follows from
\eqref{rownow} and the fact that
$\sigma_{C^*(t)}(s)=\sigma_{\mathcal{C}}(s)$ for
every $s \in C^*(t)$ (see
\cite[Proposition~VIII.1.14]{Co}; see also
\cite[Theorem~VIII.3.6]{Co}). As a consequence,
each of the elements $p_k$, $u_k$ and $v_k$
belongs to $C^*(t).$ This completes the proof.
   \end{proof}
The following property of elements of class
$\mathcal A_k$ is a direct consequence of
Proposition~\ref{DOD-WL} and
\cite[Proposition~VIII.1.14]{Co}. We will call
it $C^*$-permanence property which is shared by
some other classes of elements of a unital
$C^*$-algebra, including hyponormal ones.
   \begin{corollary}  \label{permi}
If $\mathcal C$ is a unital $C^*$-algebra, $t\in
\mathcal C$ and $k\in \zbb_+,$ then $t$ is of
class $\mathcal A_k$ if and only if $t$ is of
class $\mathcal A_k$ as an element of $C^*(t).$
   \end{corollary}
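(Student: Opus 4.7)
The plan is to verify that each of the three defining conditions of $\mathcal A_k$ in Definition~\ref{duffa-uv} is intrinsic to the $C^*$-subalgebra $C^*(t)$, in the sense that it takes the same truth value whether one reads it in $\mathcal C$ or in $C^*(t)$. Once this is done, the equivalence is immediate.

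First I would set up the ingredients. By the ``moreover'' part of Proposition~\ref{DOD-WL}, the elements $t'$, $c_k$ and $c_k^{-1}$ all lie in $C^*(t)$. Since $c_k$ is positive and invertible, the continuous functional calculus applied inside $C^*(t)$ yields $c_k^{1/2}$ and $c_k^{-1/2}$ in $C^*(t)$, and by the spectral permanence result $\sigma_{C^*(t)}(s) = \sigma_{\mathcal C}(s)$ for every $s\in C^*(t)$ (\cite[Proposition~VIII.1.14]{Co}), these coincide with the corresponding elements produced by the functional calculus in $\mathcal C$. In particular $c_k^{1/2} t' c_k^{-1/2} \in C^*(t)$.

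Next I would check each of the three conditions of Definition~\ref{duffa-uv}. The spectral radius condition $r(t) \Le 1$ is unambiguous, since by spectral permanence $\sigma_{\mathcal C}(t) = \sigma_{C^*(t)}(t)$, hence the spectral radius computed in either algebra is the same. For the other two conditions, I would use the fact that for any selfadjoint $s\in C^*(t)$, positivity $s\Ge 0$ is characterized by $\sigma(s) \subseteq [0,\infty)$, which by spectral permanence is independent of whether $s$ is viewed as an element of $\mathcal C$ or of $C^*(t)$. Applying this to $s = t^*t - e$ shows that the expansiveness of $t$ is the same condition in both algebras. Applying it to the selfadjoint element $a^*a - aa^*$ with $a = c_k^{1/2} t' c_k^{-1/2} \in C^*(t)$ shows that the hyponormality of $a$ is also the same condition in both algebras.

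No step here poses a real obstacle; the only thing to be careful about is making sure each object that appears in the definition (most notably $c_k^{\pm 1/2}$) actually belongs to $C^*(t)$, which is exactly what the ``moreover'' part of Proposition~\ref{DOD-WL} combined with continuous functional calculus and spectral permanence supplies.
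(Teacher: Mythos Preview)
Your proposal is correct and follows essentially the same approach as the paper, which simply states the corollary as a direct consequence of Proposition~\ref{DOD-WL} and \cite[Proposition~VIII.1.14]{Co}; you have merely unpacked how these two ingredients combine to handle each of the three conditions in Definition~\ref{duffa-uv}.
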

The question of the existence of limits of sequences that
were defined in \eqref{dufen} is discussed below in the
context of $W^*$-algebras. Given an element $t$ of a
$W^*$-algebra $\mathcal W$, we denote by $W^*(t)$ the
$W^*$-algebra generated by $t$ and the unit element of
$\mathcal W,$ that is, $W^*(t)$ is the $\sigma(\mathcal W,
\mathcal W_{*})$-closure of $C^*(t)$, where $\mathcal W_{*}$
is the predual of $\mathcal W$ (see
\cite[Corollary~1.7.9]{Sak98}). Recall that every
$W^*$-algebra always has a unit (see \cite[Sec.\
1.7]{Sak98}).
   \begin{proposition} \label{wstaralg}
Let $\mathcal W$ be a $W^*$-algebra with predual
$\mathcal W_{*}$ and $t\in \mathcal W$ be an
expansion. Let $\{p_k\}_{k=0}^{\infty},$
$\{u_k\}_{k=0}^{\infty}$ and
$\{v_k\}_{k=0}^{\infty}$ be as in {\em
\eqref{dufen}}. Then
   \begin{enumerate}
   \item[(i)] the sequence $\{p_k\}_{k=0}^{\infty}$
is $\sigma(\mathcal W, \mathcal
W_{*})$-convergent to a projection $($i.e., a
selfadjoint idempotent$)$ of $\mathcal W$
denoted by $p_{\infty},$
   \item[(ii)] $p_{\infty}$ is the infimum of
$\{p_k\}_{k=0}^{\infty}$ computed in the set of
all selfadjoint elements of $\mathcal W$
$($equivalently, in the set of all projections
of $\mathcal W$$),$
   \item[(iii)] the sequences $\{u_k\}_{k=0}^{\infty}$
and $\{v_k\}_{k=0}^{\infty}$ are
$\sigma(\mathcal W, \mathcal W_{*})$-convergent
to positive elements of $\mathcal W$ denoted by
$a$ and $b$, respectively.
   \end{enumerate}
Moreover, $t$ is an expansive element of $W^*(t)$ and the
elements $t^{\prime}$, $c_k$, $c_k^{-1}$, $p_k$, $u_k,$
$v_k,$ $p_{\infty},$ $a$ and $b$ belong to $W^*(t).$
   \end{proposition}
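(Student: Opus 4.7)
The plan is to combine the algebraic and monotonicity relations established in Proposition~\ref{DOD-WL} with two standard facts from $W^*$-algebra theory: the $\sigma$-weak monotone convergence theorem (every norm-bounded monotone net of selfadjoint elements of a $W^*$-algebra is $\sigma(\mathcal W,\mathcal W_*)$-convergent to its supremum/infimum; see \cite[\S1.7]{Sak98}), and the fact that under any faithful normal representation, a $\sigma$-weakly convergent monotone net of projections converges in the strong operator topology. The ``moreover'' part is essentially automatic: by Proposition~\ref{DOD-WL} each of $t^{\prime}$, $c_k$, $c_k^{-1}$, $p_k$, $u_k$, $v_k$ lies in $C^*(t)\subseteq W^*(t)$, and the inequality $t^*t\Ge e$ holds inside $W^*(t)$ because the unit of $W^*(t)$ is the unit $e$ of $\mathcal W$. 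Moreover, as $W^*(t)$ is by construction $\sigma(\mathcal W,\mathcal W_*)$-closed, any weak-$*$ limit of elements of $C^*(t)$ will automatically lie in $W^*(t)$.

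For (i) and (ii), Proposition~\ref{DOD-WL}(i) gives a decreasing sequence of projections $\{p_k\}$ with $0\Le p_k\Le e$, so by the monotone convergence theorem above, $\{p_k\}$ converges in weak-$*$ to its infimum $p_\infty$ in the selfadjoint part of $\mathcal W$. To verify that $p_\infty$ is a projection, I would use the identity $p_k p_{k+j}=p_{k+j}$ (Proposition~\ref{DOD-WL}(i)): for fixed $k$, separate weak-$*$ continuity of multiplication applied to $j\to\infty$ yields $p_k p_\infty = p_\infty$, and then letting $k\to\infty$ on the left gives $p_\infty^2 = p_\infty$; selfadjointness is preserved under weak-$*$ limits. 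Any projection that bounds $\{p_k\}$ from below bounds $p_\infty$ from below, so the infimum taken over projections coincides with the infimum taken over all selfadjoint lower bounds.

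For (iii), the case of $\{u_k\}$ is easy: by Proposition~\ref{DOD-WL}(iii) it is a decreasing sequence of positive elements bounded above by $e$, hence weak-$*$ convergent to a positive element $a$ by the same monotone convergence theorem. The sequence $\{v_k\}$ is not monotone, and this is the main obstacle. Here I would invoke the identity $v_k=p_k c_1 p_k$ from Proposition~\ref{DOD-WL}(iv) and pass to a faithful normal representation of $\mathcal W$ on a Hilbert space $\mathcal H$, so that the weak-$*$ topology is the restriction of the $\sigma$-weak topology on $\mathcal B(\mathcal H)$. For the decreasing projections $p_k\downarrow p_\infty$, weak-$*$ convergence upgrades to SOT-convergence, since for $x\in\mathcal H$,
\begin{align*}
\|(p_k-p_\infty)x\|^2 = \langle (p_k-p_\infty)x,x\rangle \longrightarrow 0
\end{align*}
using $(p_k-p_\infty)^2=p_k-p_\infty$. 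Then the telescoping decomposition
\begin{align*}
v_k x - p_\infty c_1 p_\infty x \;=\; p_k c_1(p_k x-p_\infty x) + (p_k-p_\infty)c_1 p_\infty x
\end{align*}
shows $v_k\to b:=p_\infty c_1 p_\infty$ in SOT (the first summand is controlled by $\|c_1\|\cdot\|(p_k-p_\infty)x\|$, the second by SOT-convergence applied to the fixed vector $c_1 p_\infty x$), hence in weak-$*$; positivity of $b$ is inherited from $\{v_k\}$. The same argument, applied via $u_k=t' p_k t'^*$ (Proposition~\ref{DOD-WL}(iii)), also identifies $a=t' p_\infty t'^*$. The only real subtlety is justifying the upgrade from weak-$*$ to strong convergence for $\{p_k\}$, after which the non-monotone sequence $\{v_k\}$ is handled by the sandwich identity.
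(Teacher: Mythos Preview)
Your argument is correct and, at the level of ideas, coincides with the paper's: both reduce (iii) to SOT convergence in a faithful normal representation, ultimately driven by the SOT convergence $p_k\downarrow p_\infty$ and the sandwich identity $v_k=p_k c_1 p_k$. The differences are organizational rather than mathematical. For (i)--(ii), the paper simply cites results in \cite{Sak98} and \cite{Ml91} to conclude that a decreasing sequence of projections converges $\sigma$-weakly to its infimum, which is again a projection; you instead verify $p_\infty^2=p_\infty$ by hand via separate $\sigma$-weak continuity of multiplication. For (iii), you treat $\{u_k\}$ directly by monotonicity (Proposition~\ref{DOD-WL}(iii) gives $u_{k+1}\Le u_k$), which is slightly cleaner than the paper's uniform use of the representation; and for $\{v_k\}$ you reproduce in self-contained form the telescoping argument that the paper obtains by invoking Proposition~\ref{inft-1}. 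Your route also makes the identifications $a=t'p_\infty t'^*$ and $b=p_\infty c_1 p_\infty$ explicit at this stage, whereas the paper postpones them to the proof of Theorem~\ref{sarown}. Either way the content is the same.
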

   \begin{proof}
(i)\&(ii) By Proposition~\ref{DOD-WL}(i),
$\{p_k\}_{k=0}^{\infty}$ is a uniformly bounded
monotonically decreasing sequence of projections
of $\mathcal W.$ It can be deduced from
\cite[Lemma~1.7.4, Theorem~1.16.7 and
Corollary~1.15.6]{Sak98} and
\cite[Theorem~4.3.5]{Ml91} that the sequence
$\{p_k\}_{k=0}^{\infty}$ is $\sigma(\mathcal W,
\mathcal W_{*})$-convergent to a projection of
$\mathcal W$, call it $p_{\infty}$, which is
equal to the infimum of $\{p_k\}_{k=0}^{\infty}$
computed in the set of all selfadjoint elements
of $\mathcal W$. Consequently, $p_{\infty}$
coincides with the infimum of
$\{p_k\}_{k=0}^{\infty}$ computed in the set of
all projections of $\mathcal W$.

(iii) It follows from
\cite[Theorem~1.16.7]{Sak98} that there exists a
faithful $W^*$-representation $\pi\colon
\mathcal W \to \mathcal B(\hh)$ of $\mathcal W.$
By \cite[Proposition~1.16.2]{Sak98},
$\pi(\mathcal W)$ is $\sigma(\mathcal
B(\hh),\mathcal B(\hh)_{*})$-closed. In turn,
according to \cite[Proposition~1.15.1]{Sak98},
$\pi(\mathcal W)$ is SOT-closed. Since $\pi$ is
an isometric $W^*$-homomorphism, we deduce from
\cite[Corollary~1.15.6]{Sh02} that for every
$r\in (0,\infty)$, the mapping
   \begin{align*}
\pi_r:=\pi|_{\mathcal W_r} \colon \mathcal W_r
\to \pi(\mathcal W)_r
   \end{align*}
is continuous if $\mathcal W_r$ and
$\pi(\mathcal W)_r$ are equipped with the
topologies $\sigma(\mathcal W, \mathcal W_{*})$
and $\sigma$-WOT (the $\sigma$-weak operator
topology), respectively. Because $\pi_r$ is a
bijection, we infer from the Banach-Alaoglu
theorem that $\pi_r$ is a homeomorphism. By
setting $T=\pi(t)$, it can be verified that
   \begin{align} \label{takufir}
\pi(v_k) = T^{\prime k}C_k^{-1} C_{k+1} C_k^{-1}
T^{\prime * k}, \quad k\in \zbb_+.
   \end{align}
It follows from Proposition~\ref{inft-1} that
$\{\pi(v_k)\}_{k=0}^{\infty}$ converges in SOT, and
consequently in $\sigma$-WOT to a positive operator $B\in
\mathcal B(\hh).$ Since $\pi(\mathcal W)$ is SOT-closed,
there exists a (unique) positive element $b\in \mathcal W$
such that $B=\pi(b).$ By the uniform boundedness principle,
there exists $r\in (0,\infty)$ such that $\pi(b), \pi(v_k)
\in \pi(\mathcal W)_r$ for all $k\in \zbb_+.$ Since $\pi_r$
is a homeomorphism, the sequence $\{v_k\}_{k=0}^{\infty}$
converges in $\sigma(\mathcal W, \mathcal W_{*})$ to $b.$ The
same reasoning can be applied to the sequence
$\{u_k\}_{k=0}^{\infty}.$

The ``moreover'' part is a direct consequence of the
statements (i) and (iii), the ``moreover'' part of
Proposition~\ref{DOD-WL} and \cite[Corollary~1.7.9]{Sak98}.
This completes the proof.
   \end{proof}
After this preparation, we can give two
equivalent definitions of the class $\mathcal
A_{\infty}$ in the context of $W^*$-algebras.
The first one is patterned on the equivalence
(i)$\Leftrightarrow$(iv) of
Proposition~\ref{inft-1}.
   \begin{definition} \label{duffa-w1s}
Let $\mathcal W$ be a $W^*$-algebra. We say that
$t\in \mathcal W$ is {\em of class}~$\mathcal
A_{\infty}$~if
   \begin{enumerate}
   \item[(i)] the spectral radius of
$t$ is at most $1$,
   \item[(ii)] $t$ is an expansion,
   \item[(iii)] $p_{\infty} \big((t^*t)^{-1} - t^{\prime}
p_{\infty} t^{\prime *}\big) p_{\infty} \Ge 0,$
   \end{enumerate}
where $p_{\infty}$ is as in
Proposition~\ref{wstaralg}.
   \end{definition}
The second definition is based on the
equivalence (i)$\Leftrightarrow$(ii) of
Proposition~\ref{inft-1}.
   \begin{definition} \label{duffa-inf}
Let $\mathcal W$ be a $W^*$-algebra. We say that
$t\in \mathcal W$ is {\em of class}~$\mathcal
A_{\infty}$~if
   \begin{enumerate}
   \item[(i)] the spectral radius of
$t$ is at most $1$,
   \item[(ii)] $t$ is an expansion,
   \item[(iii)] $a\Le b,$
   \end{enumerate}
where $a$ and $b$ are as in
Proposition~\ref{wstaralg}.
   \end{definition}
   \begin{theorem} \label{sarown}
Definitions~{\em \ref{duffa-w1s}} and {\em
\ref{duffa-inf}} are equivalent.
   \end{theorem}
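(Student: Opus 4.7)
The plan is to show that the two versions of condition~(iii) are literally equivalent, once we establish the identifications
\begin{equation*}
a = p_{\infty}\, t'\, p_{\infty}\, t'^{*}\, p_{\infty} \qquad \text{and} \qquad b = p_{\infty}\, (t^{*}t)^{-1}\, p_{\infty}
\end{equation*}
in $\mathcal W$. Granted these, the projection identity $p_{\infty}^{2} = p_{\infty}$ (Proposition~\ref{wstaralg}(i)) lets us rewrite
\begin{equation*}
b - a = p_{\infty} \big((t^{*}t)^{-1} - t'\, p_{\infty}\, t'^{*}\big) p_{\infty},
\end{equation*}
so that $a \Le b$ of Definition~\ref{duffa-inf}(iii) is literally Definition~\ref{duffa-w1s}(iii).

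To establish the identifications I would mimic the faithful-representation device already used in the proof of Proposition~\ref{wstaralg}(iii). Pick a faithful $W^{*}$-representation $\pi \colon \mathcal W \to \mathcal B(\hh)$ and set $T := \pi(t)$. Because $\pi$ is a $*$-homomorphism, it sends the sequences $\{p_k\}, \{u_k\}, \{v_k\}$ built from $t$ in $\mathcal W$ to the analogous sequences built from $T$ on $\hh$. By Proposition~\ref{polur}(iii) applied to $T^{\prime k}$, the image $\pi(p_k) = T^{\prime k} C_k^{-1} T^{\prime * k}$ is precisely the projection $P_k$ onto $\ob{T^{\prime k}}$; the images $\pi(u_k)$ and $\pi(v_k)$ are precisely the operators $T^{\prime (k+1)} C_k^{-1} T^{\prime *(k+1)}$ and $T^{\prime k} C_k^{-1} C_{k+1} C_k^{-1} T^{\prime * k}$ analyzed in Proposition~\ref{inft-1}. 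That proposition provides their SOT-limits, namely $A = P_{\hh'_u} T' P_{\hh'_u} T'^{*} P_{\hh'_u}$ and $B = P_{\hh'_u} (T^{*}T)^{-1} P_{\hh'_u}$, while the decreasing projections $P_k$ converge in SOT to $P_{\hh'_u}$.

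Now transport these SOT-limits back to $\mathcal W$ using the homeomorphism $\pi_r \colon (\mathcal W_r, \sigma(\mathcal W, \mathcal W_*)) \to (\pi(\mathcal W)_r, \sigma\textrm{-WOT})$ recalled in the proof of Proposition~\ref{wstaralg}(iii): for a suitable $r\in(0,\infty)$ the elements $p_k, u_k, v_k, p_{\infty}, a, b$ all lie in $\mathcal W_r$, so the $\sigma$-WOT-limits of their $\pi$-images pull back to the weak*-limits in $\mathcal W$. This forces $\pi(p_{\infty}) = P_{\hh'_u}$, $\pi(a) = A$ and $\pi(b) = B$. On the other hand, since $\pi$ is a $*$-homomorphism,
\begin{equation*}
\pi\big(p_{\infty}\, t'\, p_{\infty}\, t'^{*}\, p_{\infty}\big) = P_{\hh'_u} T' P_{\hh'_u} T'^{*} P_{\hh'_u} = A, \qquad \pi\big(p_{\infty}\, (t^{*}t)^{-1}\, p_{\infty}\big) = B,
\end{equation*}
and faithfulness (injectivity) of $\pi$ yields the two desired identifications in $\mathcal W$.

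The main obstacle is the topological bookkeeping across the representation: one must be sure that each weak*-limit on the $\mathcal W$-side is sent by $\pi$ to the SOT-limit of the corresponding sequence on the $\mathcal B(\hh)$-side, rather than to some accidental substitute. This is handled uniformly by the $\pi_r$ homeomorphism, which is exactly the machinery already deployed in Proposition~\ref{wstaralg}; no new topological subtlety is needed beyond what is done there, and the whole proof becomes a faithful transport of the equivalence (i)$\Leftrightarrow$(ii)$\Leftrightarrow$(iv) of Proposition~\ref{inft-1} from $\mathcal B(\hh)$ to $\mathcal W$.
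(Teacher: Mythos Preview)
Your proposal is correct and follows essentially the same route as the paper: both arguments pass to a faithful $W^*$-representation $\pi$, identify $\pi(p_\infty)=P_{\hh_u'}$, $\pi(a)=A$, $\pi(b)=B$ via the SOT-limits computed in Proposition~\ref{inft-1}, and then use injectivity of $\pi$ to read off $a=p_\infty t' p_\infty t'^{*} p_\infty$ and $b=p_\infty (t^*t)^{-1} p_\infty$ in $\mathcal W$, from which the equivalence of the two conditions~(iii) is immediate. The only cosmetic difference is that you first isolate the two algebraic identities and then deduce the equivalence, whereas the paper merges these steps.
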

   \begin{proof}
Without loss of generality, we can assume that
$t$ is an expansive element of $\mathcal W.$ Let
$\pi\colon \mathcal W \to \mathcal B(\hh)$ be a
faithful $W^*$-representation of $\mathcal W$
(see \cite[Theorem~1.16.7]{Sak98}). Set
$T=\pi(t)$ and $\hh_u'=\bigcap_{n=0}^{\infty}
\ob{T^{\prime n}}.$ By
Proposition~\ref{wstaralg} and
\cite[Corollary~1.15.6]{Sak98},
$\{\pi(p_k)\}_{k=0}^{\infty}$ is
$\sigma$-WOT-convergent to $\pi(p_{\infty}).$
Since by \eqref{su-1}, $\pi(p_k) =
P_{\ob{T^{\prime k}}}$ for every $k \in \zbb_+,$
we see that $\pi(p_k) \searrow P_{\hh_{u}'}$ as
$k\nearrow \infty$ in SOT (see the proof of
Proposition~\ref{inft-1}). As a consequence,
   \begin{align} \label{pipi}
\pi(p_{\infty}) = P_{\hh_{u}'}.
   \end{align}
According to the proof of
Proposition~\ref{wstaralg}(iii),
$\{\pi(v_k)\}_{k=0}^{\infty}$ is SOT-convergent
to $\pi(b).$ By \eqref{takufir}, the
``moreover'' part of Proposition~\ref{inft-1}
and \eqref{pipi}, we have
   \begin{align} \label{pipu}
\pi(b) = B = P_{\hh_{u}'} (T^*T)^{-1}
P_{\hh_{u}'} = \pi(p_{\infty} (t^*t)^{-1}
p_{\infty}).
   \end{align}
The same reasoning applied to the sequence
$\{u_k\}_{k=0}^{\infty}$ leads to the identity
   \begin{align} \label{pupu}
\pi(a) = A=P_{\hh_{u}'} T^{\prime} P_{\hh_{u}'}
T^{\prime *} P_{\hh_{u}'} = \pi(p_{\infty}
t^{\prime} p_{\infty} t^{\prime *} p_{\infty}).
   \end{align}
Clearly, the condition (iii) of
Definition~\ref{duffa-inf} is equivalent to
$\pi(b-a) \Ge 0,$ which by \eqref{pipu} and
\eqref{pupu} is equivalent to the condition
(iii) of Definition~\ref{duffa-w1s}. This
completes the proof.
   \end{proof}
The following is the counterpart of Corollary~\ref{permi} for
elements of class $\mathcal A_{\infty}.$
   \begin{corollary}
If $\mathcal W$ is a $W^*$-algebra and $t\in \mathcal W,$
then $t$ is of class $\mathcal A_{\infty}$ if and only if $t$
is of class $\mathcal A_{\infty}$ as an element of $W^*(t).$
   \end{corollary}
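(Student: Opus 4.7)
The plan is to verify each of the three conditions in Definition~\ref{duffa-inf} separately, checking that each is preserved under the change of ambient algebra between $\mathcal{W}$ and $W^*(t)$. Conditions (i) and (ii) are purely spectral in nature. Since $W^*(t)$ contains $C^*(t)$ and shares its unit with $\mathcal{W}$, \cite[Proposition~VIII.1.14]{Co} yields $\sigma_{\mathcal{W}}(t)=\sigma_{C^*(t)}(t)=\sigma_{W^*(t)}(t)$, so the spectral radius of $t$ is the same whether computed in $\mathcal{W}$ or in $W^*(t)$. The expansivity condition $t^*t \Ge e$ is likewise absolute, since positivity of a self-adjoint element is characterized spectrally.

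The heart of the matter is condition (iii), the inequality $a \Le b$. I would apply Proposition~\ref{wstaralg} twice: once in $\mathcal{W}$, producing $\sigma(\mathcal{W},\mathcal{W}_*)$-limits $a_{\mathcal{W}}$ and $b_{\mathcal{W}}$ of the sequences $\{u_k\}_{k=0}^{\infty}$ and $\{v_k\}_{k=0}^{\infty}$, and once in $W^*(t)$, producing $\sigma(W^*(t),W^*(t)_*)$-limits $a_{W^*(t)}$ and $b_{W^*(t)}$. Both applications are legitimate because $t$ is expansive in each algebra, and by the ``moreover'' part of Proposition~\ref{wstaralg} the elements $t^{\prime}$, $c_k$, $c_k^{-1}$, $u_k$, and $v_k$ all lie in $W^*(t)$; therefore the underlying sequences truly coincide in the two settings.

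The main step is then to show that $a_{W^*(t)}=a_{\mathcal{W}}$ and $b_{W^*(t)}=b_{\mathcal{W}}$. Because $W^*(t)$ is, by definition, $\sigma(\mathcal{W},\mathcal{W}_*)$-closed in $\mathcal{W}$, the restriction of any $\phi \in \mathcal{W}_*$ to $W^*(t)$ is $\sigma(W^*(t),W^*(t)_*)$-continuous, hence an element of $W^*(t)_*$. Consequently, $\sigma(W^*(t),W^*(t)_*)$-convergence of $\{v_k\}_{k=0}^{\infty}$ to $b_{W^*(t)}$ entails $\sigma(\mathcal{W},\mathcal{W}_*)$-convergence of $\{v_k\}_{k=0}^{\infty}$ to $b_{W^*(t)}$ viewed as an element of $\mathcal{W}$. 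Uniqueness of $\sigma$-weak limits in $\mathcal{W}$ forces $b_{W^*(t)}=b_{\mathcal{W}}$, and an identical argument applied to $\{u_k\}_{k=0}^{\infty}$ gives $a_{W^*(t)}=a_{\mathcal{W}}$. Finally, $a \Le b$ is the statement that $b-a$ is a positive element, which is again spectral and hence the same in $\mathcal{W}$ and in $W^*(t)$.

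The only non-routine point is the consistency of the $\sigma$-weak limits across the two preduals, i.e., the compatibility of $\sigma(\mathcal{W},\mathcal{W}_*)$ with $\sigma(W^*(t),W^*(t)_*)$ on the common algebra $W^*(t)$. This is standard $W^*$-algebra folklore but is precisely the reason why this corollary, unlike its $C^*$-algebraic counterpart Corollary~\ref{permi}, is not immediate from a purely $C^*$-permanence principle and warranted a separate statement.
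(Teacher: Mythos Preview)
Your proof is correct and takes a somewhat different route from the paper's. The paper first invokes Theorem~\ref{sarown} to pass to the equivalent Definition~\ref{duffa-w1s}, in which condition (iii) is expressed solely in terms of the projection $p_{\infty}$; it then uses the order-theoretic characterization of $p_{\infty}$ as the infimum of $\{p_k\}_{k=0}^{\infty}$ (Proposition~\ref{wstaralg}(ii)). Since the ``moreover'' part of Proposition~\ref{wstaralg} places the infimum computed in $\mathcal{W}$ already inside $W^*(t)$, a two-line greatest-lower-bound comparison shows the two infima agree, after which spectral permanence (\cite[Proposition~VIII.1.14]{Co}) handles the positivity condition. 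Your approach stays with Definition~\ref{duffa-inf} and matches up the limits $a$ and $b$ directly, by arguing that restriction carries $\mathcal{W}_*$ into $W^*(t)_*$ so that $\sigma(W^*(t),W^*(t)_*)$-convergence implies $\sigma(\mathcal{W},\mathcal{W}_*)$-convergence; this avoids the detour through Theorem~\ref{sarown} but appeals to the compatibility of the two $\sigma$-weak topologies (which, as you note, is standard---it follows from Sakai's uniqueness of the predual, identifying $W^*(t)_*$ with $\mathcal{W}_*/W^*(t)^{\perp}$). Both arguments are short; the paper's infimum comparison is slightly more elementary, while yours is more self-contained in that it does not rely on the equivalence of the two definitions.
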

   \begin{proof}
It follows from Proposition~\ref{wstaralg} and
\cite[Proposition~VIII.1.14]{Co} that the infimum of
$\{p_k\}_{k=0}^{\infty}$ computed in the set of all
projections of $\mathcal W$ coincides with the infimum of
$\{p_k\}_{k=0}^{\infty}$ computed in the set of all
projections of $W^*(t).$ Hence, by Theorem~\ref{sarown}, we
can apply Definition~\ref{duffa-w1s} and
\cite[Proposition~VIII.1.14]{Co} again.
   \end{proof}
One can see by applying Remark~\ref{rmk-Jan},
Propositions~\ref{inft-1} and \ref{wstaralg} and
\cite[Corollary~1.15.6]{Sak98} that Definitions~\ref{duffa}
and \ref{duffa-inf} coincide for $k=\infty$ if $\mathcal W =
\mathcal B(\hh)$. Regarding Proposition~\ref{wstaralg}(ii),
it is worth pointing out that the property of being closed
under the operation of taking infimum is one of the two basic
properties characterizing $W^*$-algebras among $C^*$-algebras
(cf.\ \cite[Definition~2.1]{Kad85}).
   {}
\end{document}